\newlength{\depthofsumsign}
\let\I\@undefined
\newbox\shell
\newcommand{\dia}[2]{\setbox\shell=\hbox{\begin{picture}(180,120)(-90,-60)#1
\put(-90,-60){\makebox(180,120)[b]{\large #2}}\end{picture}}\dimen0=\ht
\shell\multiply\dimen0by7\divide\dimen0by16\raise-\dimen0\box\shell\hfill}
\newcommand{\vtx}{\circle*{10}}
\DeclareSymbolFont{operators}{OT1}{txr}{m}{n}
\def\operator@font{\mathgroup\symoperators}
\DeclareSymbolFont{italic}{OT1}{txr}{m}{it}
\DeclareSymbolFontAlphabet{\mathrm}{operators}
\DeclareMathAlphabet{\mathbf}{OT1}{txr}{bx}{n}
\DeclareMathAlphabet{\mathit}{OT1}{txr}{m}{it}
\SetMathAlphabet{\mathit}{bold}{OT1}{txr}{bx}{it}
\DeclareSymbolFont{letters}{OML}{txmi}{m}{it}
\DeclareSymbolFont{lettersA}{U}{txmia}{m}{it}
\DeclareSymbolFontAlphabet{\mathfrak}{lettersA}
\DeclareSymbolFont{symbols}{OMS}{txsy}{m}{n}
\DeclareMathOperator{\IKM}{\mathbf{IKM}}
\DeclareMathOperator{\IvKM}{\mathbf{\widetilde IKM}}
\DeclareMathOperator{\IKvM}{\mathbf{I\widetilde KM}}
\DeclareMathOperator{\IpKM}{\mathbf{\acute IKM}}
\DeclareMathOperator{\IKpM}{\mathbf{I\acute KM}}
\DeclareMathOperator{\D}{d}
\DeclareMathOperator{\I}{Im}
\DeclareMathOperator{\R}{Re}
\def\XXint#1#2#3{{\setbox0=\hbox{$#1{#2#3}{\int}$}
     \vcenter{\hbox{$#2#3$}}\kern-.5\wd0}}
\def\eor{\hfill$ \square$}
\theoremstyle{plain}
\newtheorem{theorem}{Theorem}[section]
\newtheorem{proposition}[theorem]{Proposition}
\newtheorem{lemma}[theorem]{Lemma}
\newtheorem{conjecture}[theorem]{Conjecture}
\newenvironment{remark}[1][Remark]{\begin{trivlist}
\item[\hskip \labelsep {\bfseries #1}]}{\end{trivlist}}
\theoremstyle{definition}
\newtheorem{definition}[theorem]{Definition}
\numberwithin{equation}{section}
\begin{document}

\pagenumbering{roman}
\selectlanguage{english}
\title[Broadhurst--Mellit determinants]{Wro\'nskian factorizations and \\Broadhurst--Mellit determinant formulae}
\author[Yajun Zhou]{Yajun Zhou}
\address{Program in Applied and Computational Mathematics (PACM), Princeton University, Princeton, NJ 08544} \email{yajunz@math.princeton.edu}\curraddr{ \textsc{Academy of Advanced Interdisciplinary Studies (AAIS), Peking University, Beijing 100871, P. R. China}}\email{yajun.zhou.1982@pku.edu.cn}

\date{\today}
\thanks{\textit{Keywords}:   Bessel moments,  Feynman integrals, Wro\'nskian determinants, Mahler measures\\\indent\textit{Subject Classification (AMS 2010)}: 33C10, 46E25, 15A15, 11R06 (Primary) 81T18, 81T40, 81Q30, 60G50 (Secondary)\\\indent* This research was supported in part  by the Applied Mathematics Program within the Department of Energy
(DOE) Office of Advanced Scientific Computing Research (ASCR) as part of the Collaboratory on
Mathematics for Mesoscopic Modeling of Materials (CM4).
}
\maketitle

\begin{abstract}
     Drawing on Vanhove's contributions to mixed Hodge structures  for Feynman integrals in two-di\-men\-sion\-al quantum field theory, we compute two families of determinants whose entries are Bessel moments. Via explicit factorizations of certain Wro\'nskian determinants, we verify   two recent conjectures proposed by Broadhurst and Mellit, concerning determinants of arbitrary sizes. With some extensions to our methods, we also relate two more determinants of Broadhurst--Mellit to the logarithmic Mahler measures of certain polynomials. \end{abstract}

\tableofcontents

\clearpage

\pagenumbering{arabic}

\section{Introduction}
In perturbative expansions for two-dimensional quantum field theory, we often need to evaluate Feynman diagrams such as \cite[][\S8]{Vanhove2014Survey}\begin{align}
\;\;\;\;\;
\dia{\put(-50,0){\line(-1,0){50}}
\put(50,0){\line(1,0){50}}
\put(-100,0){$^M$}
\put(0,0){\circle{100}}
\qbezier(-50, 0)(0, 50)(50, 0)
\qbezier(-50, 0)(0, -50)(50, 0)
\put(50,0){\vtx}
\put(-50,0){\vtx}
\put(80,0){$^M$}\put(-15,45){$^{m_1}$}
\put(-4,-14){$ \vdots$}
\put(-15,20){$^{m_2 }$}
\put(-25,-47){$^{m_{n-1} }$}
\put(-15,-75){$^{m_n }$}
}{}\;\;\;=2^{n-1}\int_0^\infty I_0(Mx)\left[ \prod_{i=1}^n K_0(m_ix) \right]x\D x,
\end{align}where $ I_0(t)=\frac{1}{\pi}\int_0^\pi e^{t\cos\theta}\D\theta$ and $K_0(t)=\int_0^\infty e^{-t\cosh u}\D u $ are modified Bessel functions of zeroth order. When all the external legs and all the internal lines bear the same parameters (say, $ M=m_1=\cdots=m_n=1$ in the diagram above), we are left with the  single-scale Bessel moments  \cite{Groote2007,BBBG2008,Broadhurst2016,BroadhurstMellit2016}  \begin{align} \IKM(a,b;n):=\int_0^\infty[I_0(t)]^a[K_0(t)]^{b}t^{n}\D t\end{align}for certain non-negative integers $a,b,n\in\mathbb Z_{\geq0}$.

In addition to their important r\^oles in the computation of anomalous magnetic dipole moment \cite{LaportaRemiddi1996,Laporta2008,Laporta:2017okg} in quantum electrodynamics, these single-scale Bessel moments are also   intimately related  to motivic integrations  in algebraic geometry  \cite{BlochKerrVanhove2015}  and modular forms in number theory  \cite{Samart2016}, thus   having stimulated intensive mathematical research. For example, various linear relations among Bessel moments, such as $ \pi^2\IKM(3,5;1)=\IKM(1,7;1)$
\cite[conjectured in][(148)]{Broadhurst2016} and      $ 9\pi^2\IKM(4,4;1)=14\IKM(2,6;1)$   \cite[conjectured in][(147)]{Broadhurst2016}
 had been discovered by numerical experiments, before their formal proofs \cite{HB1,Zhou2017WEF} were constructed by algebraic and analytic methods.

Recently, based on  a collaboration with Anton Mellit \cite{BroadhurstMellit2016}, David Broadhurst  has laid out several dazzling conjectures about non-linear algebraic relations among $ \IKM(a,b;n)$ with fixed $a+b$  and varying $n$  \cite{Broadhurst2016}. They revolve around certain determinants whose entries are Bessel moments, two  of which are  recapitulated below.
\begin{conjecture}[Broadhurst--Mellit {\cite[][Conjecture 4]{Broadhurst2016}}]\label{conj:BMdetM}If $\mathbf M_k$  is a $k\times k$
matrix with elements
\begin{equation}
(\mathbf M_k)_{a,b}:=\int_0^\infty[I_0(t)]^a[K_0(t)]^{2k+1-a}t^{2b-1}\D t,
\label{Mk}\end{equation}then its determinant evaluates to\begin{align}
\det\mathbf M_k=\prod_{j=1}^k\frac{(2j)^{k-j}\pi^j}{\sqrt{(2j+1)^{2j+1}}}.
\end{align}
\end{conjecture}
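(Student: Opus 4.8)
The plan is to realise $\det\mathbf M_k$ as the value, at a regular point, of the Wronskian of $k$ solutions of an explicit linear ODE built from symmetric powers of the modified Bessel operator, and to evaluate that Wronskian by combining Abel's identity with a connection analysis at the singular points. To set this up I would introduce the one-parameter integrals $\Phi_a(x):=\int_0^\infty I_0(xt)\,[I_0(t)]^{a}[K_0(t)]^{2k+1-a}\,t\,\D t$ for $1\le a\le k$, which converge for $|x|<1$ and extend to even functions holomorphic near $x=0$. Expanding $I_0(xt)$ in its power series shows that $\IKM(a,2k+1-a;2b-1)$ equals the coefficient of $x^{2b-2}$ in $\Phi_a$ up to the elementary factor $4^{b-1}\big((b-1)!\big)^{2}$; the classical identity between Taylor coefficients and Wronskians then gives, with $z=x^{2}$,
\[
\det\mathbf M_k=\Bigl(\textstyle\prod_{b=1}^{k}4^{\,b-1}(b-1)!\Bigr)\;W_z(\Phi_1,\dots,\Phi_k)\big|_{z=0},
\]
so it suffices to compute one Wronskian value.

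The products $[I_0(t)]^{a}[K_0(t)]^{2k+1-a}$ $(0\le a\le 2k+1)$ span the solution space of the $(2k+1)$st symmetric power of the modified Bessel operator — an operator of order $2k+2$ that is polynomial in $t\,\D/\D t$ and in $t^{2}$ — and $I_0(xt)$ solves Bessel's equation in $x$. Transporting the symmetric-power operator in the variable $t$ onto $I_0(xt)$ by integration by parts (the $t=\infty$ endpoint contributes nothing because the integrands decay for $a\le k$; the $t=0$ endpoint needs the usual care at a regular singular point) shows that $\Phi_0,\Phi_1,\dots,\Phi_k$ are a fundamental system for a single order-$(k+1)$ equation $\mathcal E$ in $z$, linearly independent because $\Phi_a$ is singular at $z=(2k+1-2a)^{2}$ and holomorphic at the remaining finite singularities. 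The equation $\mathcal E$ is ordinary at $z=0$, has regular singular points at $z=(2j+1)^{2}$ $(j=0,\dots,k)$, and is irregular at $z=\infty$; moreover near $z=(2j+1)^{2}$ exactly one solution, namely $\Phi_{k-j}$ (whose integral first diverges as $x\to 2j+1$), acquires an explicit $\bigl((2j+1)^{2}-z\bigr)^{k-1}\log$ singularity, which forces the Wronskian of any fundamental system of $\mathcal E$ to have a simple pole there.

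Because $\Phi_1,\dots,\Phi_k$ are only $k$ of the $k+1$ solutions, $W_z(\Phi_1,\dots,\Phi_k)$, after division by the full Wronskian, is the solution of the adjoint equation $\mathcal E^{*}$ dual to the omitted $\Phi_0$; I would evaluate it in two steps. First, Abel's identity for $\mathcal E$ gives the full Wronskian $W_z(\Phi_0,\dots,\Phi_k)$ in closed form, the finite singular points contributing the factor $\prod_{j=0}^{k}\bigl(z-(2j+1)^{2}\bigr)^{-1}$ and the irregular point an exponential normalisation, so that specialising to $z=0$ leaves just one transition constant to be found. Second, that constant is pinned down by local data: the finite singularities $z=(2j+1)^{2}$, through the exponents of the logarithmic solutions of $\mathcal E$ and $\mathcal E^{*}$, feed in the algebraic factors $(2j+1)^{(2j+1)/2}$, while the connection to $z=\infty$ — carried out using the classical asymptotics $I_0(t)\sim e^{t}/\sqrt{2\pi t}$, $K_0(t)\sim\sqrt{\pi/2t}\,e^{-t}$ together with the Gamma reflection and duplication formulae — produces the powers of $\pi$. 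Collecting everything, and simplifying with the superfactorial identity $\prod_{j=1}^{k}(2j)^{k-j}=2^{k(k-1)/2}\prod_{m=1}^{k-1}m!$, yields the asserted product; equivalently one may argue by induction on $k$, the analysis of the singular point $z=(2k+1)^{2}$ that is new at stage $k$ supplying exactly the ratio $\det\mathbf M_k/\det\mathbf M_{k-1}=2^{\,k-1}(k-1)!\,\pi^{k}/(2k+1)^{(2k+1)/2}$.

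I expect the second step — the connection problem across the irregular singularity at $z=\infty$ — to be the main obstacle: it is there that the exact shape of the answer (every $\pi^{j}$ and every $(2j+1)^{(2j+1)/2}$) gets forced, and it is delicate because the exponents of $\mathcal E$ at the finite singular points differ by integers, so the Frobenius solutions carry logarithms whose normalisations must be propagated all the way into the Gamma-function evaluations. A subsidiary technical point, which also needs the precise decay and growth rates of products of $I_0$ and $K_0$ at the two endpoints, is the rigorous control of the boundary terms in the integrations by parts used to produce $\mathcal E$.
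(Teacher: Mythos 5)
Your opening reduction is correct and appealing: with $\Phi_a(x)=\int_0^\infty I_0(xt)[I_0(t)]^a[K_0(t)]^{2k+1-a}t\,\D t$ one does have $\det\mathbf M_k=\bigl(\prod_{b=1}^k 4^{b-1}(b-1)!\bigr)W_z(\Phi_1,\dots,\Phi_k)\big|_{z=0}$, and your target ratio $\det\mathbf M_k/\det\mathbf M_{k-1}=2^{k-1}(k-1)!\,\pi^k(2k+1)^{-(2k+1)/2}$ is the right one. But the argument then rests on an object that does not exist as described: an \emph{explicit} order-$(k+1)$ operator $\mathcal E$ having $\Phi_0,\dots,\Phi_k$ as a fundamental system. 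Your $\Phi_a$ carry $2k+2$ Bessel factors in total (you \emph{adjoin} $I_0(xt)$ rather than rescale one of the existing $2k+1$ factors), so the relevant Picard--Fuchs operator is Vanhove's $\widetilde L_{2k}$, of order $2k$; its kernel is spanned by $\Phi_1,\dots,\Phi_k$ \emph{together with} the $K_0(xt)$-scaled companions $\int_0^\infty K_0(xt)[I_0(t)]^{a}[K_0(t)]^{2k+2-a}t\,\D t$ and one special inhomogeneous combination. Thus $\Phi_1,\dots,\Phi_k$ span only a $k$-dimensional subspace of a $2k$-dimensional solution space, $W_z(\Phi_1,\dots,\Phi_k)$ is a codimension-$k$ minor rather than a full Wronskian, Abel's identity does not apply to it, and the ``adjoint equation'' device is valid only for codimension-one minors. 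One can of course manufacture \emph{some} order-$(k+1)$ ODE annihilating $\Phi_0,\dots,\Phi_k$, but its coefficients are themselves ratios of the very Wronskians you are trying to compute, so invoking Abel's identity for it is circular. (For comparison, the genuine full Wronskian of $\widetilde L_{2k}$ behaves like $u^{-k^2}\prod_{j}[(2j-1)^2-u]^{-k}$; in particular $u=0$ is a regular singular point of the true equation, not an ordinary point as you assert.)

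Second, even granting $\mathcal E$, the step you yourself flag as the main obstacle --- the connection problem across the irregular singularity at $z=\infty$, which is where every $\pi^j$ and every $(2j+1)^{(2j+1)/2}$ would have to come from --- is not carried out, and no mechanism is given for the claimed single-family recursion ``new at stage $k$''. The paper sidesteps both difficulties: it works with the \emph{complete} fundamental systems of $\widetilde L_{2k-1}$ and $\widetilde L_{2k}$ (so Abel's identity legitimately determines each full Wronskian up to one constant) and fixes the constants by evaluating each Wronskian at the two finite points $u\to0^+$ and $u\to1^-$, where block factorizations yield $\det\mathbf M_{k-1}\det\mathbf M_k\propto(\det\mathbf N_{k-1})^2$ and $\det\mathbf N_{k-1}\det\mathbf N_k\propto(\det\mathbf M_k)^2$; Conjectures \ref{conj:BMdetM} and \ref{conj:BMdetN} are then proved \emph{simultaneously} by induction, with no analysis at $u=\infty$ required. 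If you wish to salvage your route, the natural repair is exactly this: enlarge your family to a full basis of $\ker\widetilde L_{2k}$, accept that the $z\to0^+$ limit then produces $(\det\mathbf M_k)^2$ rather than $\det\mathbf M_k$ itself, and supply a second finite evaluation point to close the resulting coupled recursion.
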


\begin{conjecture}[Broadhurst--Mellit {\cite[][Conjecture 7]{Broadhurst2016}}]\label{conj:BMdetN}If $\mathbf N_k$  is a $k\times k$
matrix with elements
\begin{equation}
(\mathbf N_k)_{a,b}:=\int_0^\infty[I_0(t)]^a[K_0(t)]^{2k+2-a}t^{2b-1}\D t,
\label{Nk}\end{equation}then its determinant evaluates to\begin{align}
\det\mathbf N_k=\frac{2\pi^{(k+1)^2/2}}{\Gamma((k+1)/2)}\prod_{j=1}^{k+1}\frac{(2j-1)^{k+1-j}}{(2j)^j},
\end{align}an expression that involves Euler's gamma function  $ \Gamma(x):=\int_0^\infty t^{x-1}e^{-t}\D t$ for $x>0$.
\end{conjecture}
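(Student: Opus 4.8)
The plan is to treat $\mathbf N_k$ as the even-exponent companion of $\mathbf M_k$ and to re-run the Wro\'nskian machinery behind Conjecture~\ref{conj:BMdetM}, paying the extra price demanded by the borderline solution $[I_0K_0]^{k+1}$. First I would fix the differential structure: with $\mathcal L:=t\frac{\D^2}{\D t^2}+\frac{\D}{\D t}-t$ the modified Bessel operator, the products $u_a(t):=[I_0(t)]^a[K_0(t)]^{2k+2-a}$, $0\le a\le 2k+2$, span the solution space of the symmetric power $\operatorname{Sym}^{2k+2}\mathcal L$ (equivalently, of the pertinent Vanhove--Picard--Fuchs operator), an ODE of order $2k+3$. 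Among these, exactly the $u_a$ with $a\le k$ decay quickly enough at $+\infty$ for the moments $\IKM(a,2k+2-a;n)$ to converge against polynomial weights, and the entries of $\mathbf N_k$ are precisely the pairings of $u_1,\dots,u_k$ against the $k$ functionals $f\mapsto\int_0^\infty f(t)\,t^{2b-1}\,\D t$, $b=1,\dots,k$.

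Second, I would reduce $\det\mathbf N_k$ to a genuine Wro\'nskian. Multiplying $\operatorname{Sym}^{2k+2}\mathcal L\,u=0$ by $t^{m}$ and integrating over $(0,\infty)$ produces, after repeated integration by parts, linear recursions among the moments $\IKM(a,2k+2-a;m)$ for different $m$; combined with the linear ``sum rules'' of Broadhurst type already available for exponent sum $2k+2$, these recursions act as explicit column operations on the moment matrix (equivalently, they encode Vanhove's period relations). Performing them collapses $\det\mathbf N_k$, up to a scalar that I would track through the leading Frobenius coefficients at the origin, into a boundary value of the Wro\'nskian $W(u_1,\dots,u_k):=\det\big[u_a^{(b-1)}\big]_{a,b=1}^{k}$ --- more precisely, into a $t\to0^+$ limit of a fixed combination taken in a Frobenius frame, in which the logarithms carried by $K_0$ have to be organised into a Jordan-type basis before the limit makes sense.

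Third comes the Wro\'nskian factorization itself, which is the engine of the argument. Writing $r:=I_0/K_0$ and pulling out the common factor via $W_t(\phi\psi_1,\dots,\phi\psi_k)=\phi^{k}\,W_t(\psi_1,\dots,\psi_k)$ with $\phi=[K_0]^{2k+2}$, then applying the change-of-variable identity $W_t(f_1,\dots,f_k)=(r')^{\binom k2}W_r(f_1,\dots,f_k)$ --- legitimate because $r'=\big(I_0'K_0-I_0K_0'\big)/[K_0]^2=1/(t[K_0]^2)$ is elementary --- the problem collapses to the classical monomial Wro\'nskian $W_r(r,r^2,\dots,r^k)=\big(\prod_{m=1}^{k-1}m!\big)\,r^{k}$. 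This yields the closed form
\begin{equation*}
W(u_1,\dots,u_k)=\Big(\prod_{m=1}^{k-1}m!\Big)\,t^{-k(k-1)/2}\,[I_0(t)]^{k}[K_0(t)]^{k(k+2)},
\end{equation*}
and the analogous formula for an arbitrary index set (a generalized Vandermonde times a power of $r$) is the ``Wro\'nskian factorization'' advertised in the title. The small-$t$ inputs $I_0(t)=1+O(t^2)$, $K_0(t)=-\log(t/2)-\gamma+O(t^2\log t)$ fix the origin-side constant, while the large-$t$ asymptotics $I_0(t)\sim e^{t}/\sqrt{2\pi t}$, $K_0(t)\sim\sqrt{\pi/(2t)}\,e^{-t}$ convert the surviving Gaussian-type integrals $\int^{\infty}t^{m}e^{-ct}\,\D t$ and the accumulated $\sqrt{2\pi}$ factors into $\pi^{(k+1)^2/2}$ together with $\Gamma((k+1)/2)$. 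The half-integer argument of the gamma function is forced by the parity of $2k+2$: the borderline solution $[I_0K_0]^{k+1}$, absent from $\mathbf N_k$ but present in $\operatorname{Sym}^{2k+2}\mathcal L$, contributes only a ``half-dimensional'' residue, which is exactly the structural feature distinguishing Conjecture~\ref{conj:BMdetN} from Conjecture~\ref{conj:BMdetM}. Assembling the origin factor, the superfactorial $\prod_{m=1}^{k-1}m!$, and the infinity factor gives $\det\mathbf N_k=\frac{2\pi^{(k+1)^2/2}}{\Gamma((k+1)/2)}\prod_{j=1}^{k+1}\frac{(2j-1)^{k+1-j}}{(2j)^{j}}$.

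I expect the decisive obstacle to be the second step rather than the factorization: proving that the column operations dictated by the $\operatorname{Sym}^{2k+2}\mathcal L$ recursions, together with the known sum rules, actually carry $\det\mathbf N_k$ onto the factored Wro\'nskian with the precise, fully explicit scalar --- keeping exact account of the logarithmic Frobenius frame at $t=0$ and of the borderline term at $t=\infty$. The factorization in the third step is essentially the Vandermonde determinant wearing a Bessel costume, and should go through with no surprises once $r'=1/(t[K_0]^2)$ is in hand; the genuinely delicate bookkeeping all lives in the passage from a determinant of integrals to a boundary value of a Wro\'nskian.
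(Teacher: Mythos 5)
Your step three is a correct computation, but of the wrong object, and that mislocation is the fatal gap. The determinant $\det\mathbf N_k$ is a Gram-type determinant, $\det\big(\int_0^\infty u_a(t)\,t^{2b-1}\,\D t\big)_{a,b}$: each entry pairs a function with a functional. The quantity you factorize, $W_t(u_1,\dots,u_k)=\det\big(u_a^{(b-1)}(t)\big)$, is a pointwise Wro\'nskian of the \emph{integrands} in the variable $t$. The ``column operations dictated by moment recursions'' cannot carry the first onto a boundary value of the second: integrating $t^m\,\mathrm{Sym}^{2k+2}\mathcal L\,u_a=0$ by parts yields, for each $m$, essentially the same linear relation among the moments $\int_0^\infty u_a(t)t^{m'}\D t$ for every admissible $a$ (the coefficients come from the operator and from $m$, not from $a$; only the boundary terms distinguish the rows). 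Column operations applied uniformly to every row rescale a determinant or add fixed columns --- they cannot collapse a $k\times k$ array of independent integrals into the $t\to0^+$ limit of a single elementary function. A concrete witness of the mismatch: your closed form $W_t(u_1,\dots,u_k)=\big(\prod_{m=1}^{k-1}m!\big)t^{-k(k-1)/2}[I_0(t)]^k[K_0(t)]^{k(k+2)}$ has small-$t$ Frobenius data built only from $\gamma$, $\log 2$ and rationals, whereas the target contains $\prod_{j=1}^{k+1}(2j-1)^{k+1-j}$, and the odd squares $(2j-1)^2$ are singularities that simply do not occur anywhere in the $t$-picture.

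They do occur in the paper's picture, and this is the structural ingredient you are missing. The paper differentiates not in $t$ but in a deformation parameter $u$ inserted into one Bessel factor, $K_0(\sqrt u t)$ or $I_0(\sqrt u t)$: the identities $(uD^2+D^1)K_0(\sqrt ut)=\tfrac{t^2}{4}K_0(\sqrt ut)$ convert $u$-derivatives into exactly the extra powers $t^{2b-1}$ that populate the columns of $\mathbf N_k$, so the Wro\'nskian in $u$ of the $2k$ two-scale moments genuinely specializes, at $u\to1^-$ and after column eliminations using $I_0(t)K_1(t)+I_1(t)K_0(t)=1/t$, to $\det\mathbf N_{k-1}\det\mathbf N_k$ times explicit constants, and, as $u\to0^+$, to $(\det\mathbf M_k)^2$ times explicit constants. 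Abel's identity applied to Vanhove's operator $\widetilde L_{2k}$ --- whose singular locus is precisely $u\in\{0\}\cup\{(2j-1)^2:1\le j\le k+1\}$ --- determines the $u$-Wro\'nskian up to one constant, and comparing the two limits yields the recursion \eqref{eq:detN_rec}; this is coupled with the companion recursion \eqref{eq:detM_rec} and closed by a joint induction with Conjecture~\ref{conj:BMdetM}. So besides the missing bridge in your step two, your outline also lacks the recursive structure: the paper never evaluates $\det\mathbf N_k$ directly, only the product $\det\mathbf N_{k-1}\det\mathbf N_k$ in terms of $\det\mathbf M_k$. To repair your proposal you would have to reintroduce the second scale $u$ and its Picard--Fuchs equation --- at which point you would be reconstructing the paper's argument rather than giving an alternative to it.
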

In our previous work \cite[][\S3]{Zhou2017WEF}, we established the determinant formula \begin{align} \det \mathbf M_2=\det
\begin{pmatrix}\IKM(1,4;1) & \IKM(1,4;3) \\
\IKM(2,3;1) & \IKM(2,3;3) \\
\end{pmatrix}=\frac{2\pi^{3}}{\sqrt{3^35^5}}\end{align} by evaluating all the four entries of $ \mathbf M_2$ in closed form. These analytic evaluations were made possible by integrations of some special modular forms. It appears uneconomical, if not utterly infeasible, to probe into the remaining scenarios in  Conjectures \ref{conj:BMdetM} and \ref{conj:BMdetN}  through  analytic expressions for all the individual elements in these matrices. Indeed, only a limited number of individual Bessel moments $ \IKM(a,b;n)$ for $a+b\geq5$ are currently known in closed form (say, as special $L$-values attached to certain automorphic forms) \cite{Broadhurst2016,Zhou2017WEF}.

 In this work, we  verify Conjectures \ref{conj:BMdetM}--\ref{conj:BMdetN}, in their entirety, via  Vanhove's studies of mixed Hodge structures for Feynman integrals \cite{Vanhove2014Survey},
and factorizations of certain Wro\'nskian determinants. This approach allows us to find a recursive mechanism underlying the Broad\-hurst--Mellit determinant formulae, without going through the ordeals of evaluating individual matrix elements by brute force. The same method can be extended to certain determinants whose entries involve the vacuum diagrams $V_n:=\IKM(0,n;1)=\int_0^\infty[K_0(t)] ^nt\D t$ for $n\in\{5,6\}$. These extensions allow us to evaluate two other determinants that were studied numerically by Broadhurst--Mellit \cite[][(101) and (114)]{Broadhurst2016}, in terms of logarithmic Mahler measures,  which are defined as\begin{align}
m(P):=\int_0^1\D t_1\cdots\int_0^1\D t_n\,\log|P(e^{2\pi i t_1},\dots,e^{2\pi i t_n})|\label{eq:defn_Mahler_m}
\end{align}for all Laurent polynomials $ P\in\mathbb C[x_1^{\pm1},\dots, x_n^{\pm1}]$.

This article runs as follows.  In \S\ref{sec:detM2}, we write a new proof for  $ \det \mathbf M_2=\frac{2\pi^{3}}{\sqrt{3^35^5}}$, using algebraic manipulations of determinants, rather than automorphic representations of individual matrix entries.  We carry on these algebraic arguments in  \S\ref{sec:detN2} to produce a proof of $ \det \mathbf N_2=\frac{\pi^{4}}{2^{6}3^2}$, before devoting  \S\ref{sec:detMkNk}  to the treatments of $ \det \mathbf M_k$ and $ \det\mathbf N_k$ that come in arbitrary sizes ($k\in\mathbb Z_{\geq2} $). In \S\ref{sec:VacMahler}, we open with an overview of current understandings for the relations between vacuum diagrams and Mahler measures, before presenting a proof of the  results stated below.\begin{theorem}[Broadhurst--Mellit determinants and Mahler measures]\label{thm:BM_Mahler}We have the following determinant evaluations, in terms of the logarithmic Mahler measures defined in \eqref{eq:defn_Mahler_m}:\begin{align}
\det \check {\mathbf M}_2:=\det
\begin{pmatrix}\IKM(0,5;1) & \IKM(0,5;3) \\
\IKM(2,3;1) & \IKM(2,3;3) \\
\end{pmatrix}={}&\frac{2 \pi^{3}}{15\sqrt{15}}m(1+x_1+x_2+x_3+x_4),\\\det \check {\mathbf N}_2:=\det
\begin{pmatrix}\IKM(0,6;1) & \IKM(0,6;3) \\
\IKM(2,4;1) & \IKM(2,4;3) \\
\end{pmatrix}={}&\frac{\pi^{4}}{96}m(1+x_1+x_2+x_3+x_4+x_{5})
.
\end{align}\end{theorem}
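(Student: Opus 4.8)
\section*{Proof proposal}

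The plan is to combine two ingredients: first, the Wro\'nskian-factorization machinery of \S\ref{sec:detMkNk}, which by then will have expressed determinants like $\det\check{\mathbf M}_2$ as products of Bessel moments of the form $\IKM(0,n;1)$ with known numerical prefactors; and second, the identification of the vacuum integrals $V_5=\IKM(0,5;1)$ and $V_6=\IKM(0,6;1)$ with logarithmic Mahler measures, which is recalled at the start of \S\ref{sec:VacMahler}. Concretely, I expect the overview in \S\ref{sec:VacMahler} to record (following Broadhurst--Mellit, Bloch--Kerr--Vanhove and Samart) the relations $V_5=\frac{\pi^2}{\sqrt{15}}\,\bigl(\text{something}\bigr)$ and an analogous $V_6$ formula, together with the classical Mahler-measure evaluations of $1+x_1+\dots+x_4$ (a five-term sum, linked to $V_5$ via an $L$-value of a weight-$3$ modular form of level $15$) and of $1+x_1+\dots+x_5$ (a six-term sum, linked to $V_6$). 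So the theorem should reduce to an \emph{algebraic} identity between the Wro\'nskian-derived closed form for the $2\times2$ determinant and these Mahler-measure expressions.

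The key steps, in order, would be: (i) apply the $k=2$ instance of the factorization results from \S\ref{sec:detMkNk} --- the same Vanhove mixed-Hodge-structure plus Wro\'nskian-quotient argument used for $\det\mathbf M_2$ and $\det\mathbf N_2$ --- but now with the top row of the matrix replaced by $\IKM(0,5;\cdot)$ instead of $\IKM(1,4;\cdot)$, respectively $\IKM(0,6;\cdot)$ instead of $\IKM(1,5;\cdot)$; this replaces one ``$I$'' by one ``$K$'' in the integrand and so pulls in the vacuum diagram $V_5$ (resp.\ $V_6$) as the essential transcendental constant, multiplied by the same Wro\'nskian/Bessel-moment data that produced $\frac{2\pi^3}{\sqrt{3^35^5}}$ (resp.\ $\frac{\pi^4}{2^63^2}$) in the pure $\mathbf M_2$ (resp.\ $\mathbf N_2$) case. (ii) Extract from this a clean formula $\det\check{\mathbf M}_2 = c\cdot V_5$ and $\det\check{\mathbf N}_2 = c'\cdot V_6$ for explicit rational-times-power-of-$\pi$ constants $c,c'$. (iii) Invoke the known evaluations $V_5 = \alpha\,\pi^2\, m(1+x_1+x_2+x_3+x_4)$ and $V_6 = \beta\,\pi^3\, m(1+x_1+x_2+x_3+x_4+x_5)$ (with $\alpha,\beta$ rational, taken from the literature surveyed in \S\ref{sec:VacMahler}). (iv) Multiply through and simplify $c\alpha$ and $c'\beta$ to land on $\frac{2\pi^3}{15\sqrt{15}}$ and $\frac{\pi^4}{96}$, checking the powers of $\pi$ and the arithmetic of $\sqrt{15}=\sqrt{3\cdot5}$ against $\sqrt{3^35^5}=15^2\sqrt{15}$.

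The main obstacle I anticipate is step (i): making the Wro\'nskian factorization go through when the first row no longer comes from $[I_0(t)]^1$ but from $[K_0(t)]$ in place of an $I_0$ factor. In the pure $\mathbf M_k,\mathbf N_k$ setting, the decisive structural fact is that the relevant Bessel moments solve (or are periods attached to) the same inhomogeneous Picard--Fuchs / Vanhove ODE, so that the Wro\'nskian of a fundamental system factors into elementary closed forms; replacing an $I_0$ by a $K_0$ changes which solution of the \emph{homogeneous} equation one integrates against, but---crucially---not the differential operator itself, so the Wro\'nskian prefactor is unchanged and only the ``constant of integration,'' namely the vacuum period $V_n$, is new. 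I would need to verify carefully that $\IKM(0,n;1)$ really does play the role of that residual period (e.g.\ via the asymptotic/boundary behaviour of $[K_0(t)]^n$ at $t\to0^+$ and $t\to\infty$ feeding into the same bilinear-pairing argument used earlier), and that no spurious extra term appears. A secondary, more bookkeeping-type difficulty is assembling the precisely-normalized $V_5,V_6$-to-Mahler-measure dictionary from the several sources cited, since different papers normalize the $L$-values and the Bessel moments differently; getting the rational constants $\alpha,\beta$ right is where a sign or a factor of $2$ could slip in, so I would cross-check numerically against the stated right-hand sides $\frac{2\pi^3}{15\sqrt{15}}$ and $\frac{\pi^4}{96}$.
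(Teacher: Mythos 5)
Your overall instinct---replace one row of the Wro\'nskian by the vacuum moments and track what changes---is the right starting point, but steps (ii) and (iii) of your plan are both unavailable, and the proof cannot be completed along these lines. First, no formula $\det\check{\mathbf M}_2=c\cdot V_5$ exists: the determinant is the bilinear combination $\IKM(0,5;1)\IKM(2,3;3)-\IKM(0,5;3)\IKM(2,3;1)$, and $\IKM(0,5;1)=V_5$ and $\IKM(0,5;3)$ are (as far as anyone knows) independent transcendentals, so the determinant does not collapse onto a rational-times-power-of-$\pi$ multiple of $V_5$. Second, the ``known evaluations'' $V_5=\alpha\pi^2\,m(1+x_1+\cdots+x_4)$ and the $V_6$ analogue invoked in your step (iii) do not exist: the proven dictionary \eqref{eq:V3V4} stops at $V_4$, the arithmetic of $V_5$ and $V_6$ is explicitly described as uncharted, and the Mahler measures of the five- and six-term linear forms are only \emph{conjecturally} $L$-values (Conjecture \ref{conj:L5(4)L6(5)Mahler}, still open). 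Had steps (ii)--(iii) been available, you would have proved a $V_5$-to-Mahler-measure identity strictly stronger than the theorem, which only establishes the equivalence of Conjectures \ref{conj:L5(4)L6(5)BM} and \ref{conj:L5(4)L6(5)Mahler}.

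The structural point you single out as the main obstacle is exactly where the argument must diverge from the homogeneous case, but your proposed resolution of it is incorrect. The function $\IKvM(0,5;1|u)$ is \emph{not} a solution of the homogeneous Vanhove equation with merely a different ``constant of integration'': it satisfies the inhomogeneous equation $\widetilde L_3\IKvM(0,5;1|u)=\tfrac32\log u$ of \eqref{eq:PF_Vv5_u_all}, and likewise $\widetilde L_4\IKvM(0,6;1|u)=\tfrac{15}{4}\log u$. Consequently the Wro\'nskian $\check\varOmega_3(u)$ obeys a first-order ODE with a source term equal to $\log u$ times a $2\times2$ minor of the remaining homogeneous solutions; that minor is identified with Kluyver's density $\int_0^\infty J_0(\sqrt u\,t)[J_0(t)]^4t\D t$, and integrating the ODE from $0$ to $1$ produces the extra term $\tfrac{\pi^4}{4}\int_0^\infty\frac{1-J_0(t)}{t}[J_0(t)]^4\D t$, which Broadhurst's formula \eqref{eq:BroadhurstMahler} converts into $m(1+x_1+x_2+x_3+x_4)-m(1+x_1+x_2+x_3)$. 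The integration constant fixed at $u\to0^+$ is $\pi^2V_4=\tfrac{\pi^4}{4}m(1+x_1+x_2+x_3)$ --- not $V_5$ --- so the three-variable Mahler measures cancel, leaving $\tfrac{\pi^4}{4}m(1+x_1+\cdots+x_4)$ to be matched against the factorization $\check\varOmega_3(1)=\tfrac{1}{2^3}\IKM(1,2;1)\det\check{\mathbf M}_2$ at $u=1$; the $\check\omega_4$ case runs in parallel. In short, the Mahler measure enters through the \emph{inhomogeneity} of the Vanhove equation (via the ramble-integral representation \eqref{eq:zeta_Mahler}), not through any $V_5$, $V_6$ dictionary; without this mechanism your plan has no way to produce the stated right-hand sides.
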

\section{An algebraic evaluation of $ \det \mathbf M_2$\label{sec:detM2}}
As announced in the introduction, we now calculate  $ \det \mathbf M_2$ without evaluating each element in the matrix $\mathbf M_2$. In \S\ref{subsec:W3x3}, using variations on the single-scale Bessel moments, we  construct a $3\times 3$ Wro\'nskian determinant as a function $\varOmega_3(u)$  of a  parameter $u\in(0,4)$, and characterize $\varOmega_3(u),u\in(0,4)$ up to an overall multiplicative constant. In \S\ref{subsec:detM2}, we determine the aforementioned multiplicative constant by the asymptotic behavior $ \varOmega_3(u),u\to0^+$, and compute $\det\mathbf M_2$ via the special value $ \varOmega_3(1)$.   \subsection{A $3\times 3$ Wro\'nskian determinant\label{subsec:W3x3}}To simplify notations, we introduce a few abbreviations involving Bessel moments and their analogs.  \begin{definition}We write $ \IvKM$ (resp. $\IKvM$) for two-scale Bessel moments with  a rescaled argument in one $I_0$ (resp.~$K_0$) factor. Concretely speaking, we have  \begin{align}
\IvKM(a+1,b;n|u):={}&\int_0^\infty I_{0}(\sqrt{u}t)[I_0(t)]^a[K_0(t)]^{b}t^{n}\D t,\\\IKvM(a,b+1;n|u):={}&\int_0^\infty K_{0}(\sqrt{u}t)[I_0(t)]^a[K_0(t)]^{b}t^{n}\D t,
\end{align} for certain non-negative integers $ a,b,n\in\mathbb Z_{\geq0}$ that make these integral expressions absolutely convergent for a given scaling parameter $u>0$. Differentiations in the variable $u$ will be denoted by short-hands like $ D^m
f(u):=\D^mf(u)/\D u^m$, where $m\in\mathbb Z_{\geq0}$. It is understood that $ D^0f(u)=f(u)$. For $N\in\mathbb Z_{>1}$, the Wro\'nskian determinant $ W[f_1(u),\dots,f_N(u)]$ refers to $ \det(D^{i-1}f_j(u))_{1\leq i,j\leq N}$.\eor\end{definition}
Here, for the convergence test of the two-scale Bessel moments, it would suffice to remind our readers of  the asymptotic expansions for the modified Bessel functions:\begin{align}
I_0(t)=\frac{e^{t}}{\sqrt{2\pi t}}\left[ 1+O\left( \frac{1}{t} \right) \right],\quad K_0(t)=\sqrt{\frac{\pi}{2t}}e^{-t}\left[ 1+O\left( \frac{1}{t} \right) \right],\label{eq:IK_asympt}
\end{align}as $ t\to\infty$. In the  $t\to0^+$ regime, the bounded term   $ I_{0}(t)=1+O(t^2)$ and  the mild singularity $K_0(t)=O(\log t)$ do not contribute to the convergence test of single-scale Bessel moments $ \IKM$ and their two-scale analogs $ \IvKM, \IKvM$. Later in this section, we will also find the following facts\begin{align}
\sup_{t>0} \sqrt tI_0(t)K_0(t)<\infty,\quad \sup_{t>0}t^{3}\left\vert [I_0(t)K_0(t)]^2-\frac{1}{4t^2} \right\vert<\infty\label{eq:IK_sup}\end{align}and\begin{align}\sup_{t>0}\frac{K_0(t)}{1+|\log t|}<\infty\label{eq:Klog_sup}
\end{align}useful in bound estimates for $ \IKvM(a,b+1;n|u)$, as $ u\to0^+$.

 Setting \begin{align}
\left\{\begin{array}{l}\mu^\ell_{2,1}(u)=\frac{\IvKM(1,4;2\ell-1|u)+4\IKvM(1,4;2\ell-1|u)}{5},\\\mu^\ell_{2,2}(u)=\IvKM(2,3;2\ell-1|u),\\\mu^\ell_{2,3}(u)=\IKvM(2,3;2\ell-1|u),\end{array}\right.
\end{align} we study the Wro\'nskian determinant\begin{align}\begin{split}
\varOmega_3(u):={}&W[\mu^1_{2,1}(u),\mu^1_{2,2}(u),\mu^1_{2,3}(u)]\\={}&\det \begin{pmatrix}D^{0}\mu^1_{2,1}(u)&D^{0}\mu^1_{2,2}(u)&D^{0}\mu^1_{2,3}(u) \\
D^1\mu^1_{2,1}(u)&D^1\mu^1_{2,2}(u)&D^1\mu^1_{2,3}(u) \\
D^2\mu^1_{2,1}(u)&D^2\mu^1_{2,2}(u)&D^2\mu^1_{2,3}(u) \\
\end{pmatrix}\end{split}\label{eq:3x3_Wdet}
\end{align} in the next lemma. \begin{lemma}[Vanhove differential equation for  $ \varOmega_3(u)$]\label{lm:W_ODE_M2}For $0<u<4$, the Wro\'nskian determinant $\varOmega_3(u):=W[\mu^1_{2,1}(u),\mu^1_{2,2}(u),\mu^1_{2,3}(u)]$ satisfies the following differential equation:\begin{align}
D^{1}\varOmega_3(u)=\frac{3\varOmega_3(u)}{2}D^1\log\frac{1}{u^2 (4-u) ( 16-u)}.\label{eq:W_ODE_M2_prep0}
\end{align}\end{lemma}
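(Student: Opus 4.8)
The plan is to realise $\mu^1_{2,1}(u)$, $\mu^1_{2,2}(u)$, $\mu^1_{2,3}(u)$ as three linearly independent solutions of one common third-order linear ODE on $(0,4)$ --- Vanhove's Picard--Fuchs equation \cite{Vanhove2014Survey} for the two-scale integrals at hand --- and then to extract \eqref{eq:W_ODE_M2_prep0} from Abel's (Liouville's) formula for the Wronskian of that equation's solutions.

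First I would assemble the differential machinery in the scaling variable $u$. A short computation shows that, for each fixed $t>0$, both $F(u)=I_0(\sqrt u\,t)$ and $F(u)=K_0(\sqrt u\,t)$ satisfy $(4uD^2+4D)F(u)=t^2F(u)$; that is, the operator $4D\circ(uD)$ trades the scaling factor for a multiplication by $t^2$. Applying $4D\circ(uD)$ to any of the $\mu$'s therefore raises the power of $t$ inside the integrand by $2$, while the modified Bessel equations $t^2I_0''(t)+tI_0'(t)-t^2I_0(t)=0$ and $t^2K_0''(t)+tK_0'(t)-t^2K_0(t)=0$, combined with the Leibniz rule, supply a fixed-order linear relation in $t$ (with polynomial coefficients) tying $I_0(t)$, $K_0(t)$ and the unscaled kernel $[I_0(t)]^a[K_0(t)]^b$ together. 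Playing these two ingredients against one another and integrating the resulting $t$-derivatives by parts --- the boundary contributions at $t=0$ and $t\to\infty$ vanishing throughout $0<u<4$ on account of \eqref{eq:IK_asympt}, \eqref{eq:IK_sup} and \eqref{eq:Klog_sup} --- eliminates the explicit $t$-dependence and yields a homogeneous ODE $a_3(u)D^3y+a_2(u)D^2y+a_1(u)Dy+a_0(u)y=0$ annihilating each $\mu^1_{2,j}(u)$. Here the weight $\tfrac15\bigl(\IvKM(1,4;\cdot\,|u)+4\IKvM(1,4;\cdot\,|u)\bigr)$ in the definition of $\mu^1_{2,1}$ is not arbitrary: the coefficients $1,4$ and the normalising $5$ are exactly what the Bessel Wronskian $I_0(t)K_0'(t)-I_0'(t)K_0(t)=-1/t$ requires so that the would-be inhomogeneous (source) terms cancel and $\mu^1_{2,1}$ obeys the \emph{same} homogeneous equation as $\mu^1_{2,2}$ and $\mu^1_{2,3}$.

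Next I would pin down the top two coefficients. An indicial (Frobenius) analysis locates the finite singularities of this Fuchsian operator at the Landau thresholds $\sqrt u\in\{0,2,4\}$ of the five-Bessel configuration with masses $(\sqrt u,1,1,1,1)$, whence $a_3(u)=c\,u^2(4-u)(16-u)$ for some constant $c\neq0$; and the self-duality of Picard--Fuchs operators of polarizable variations of Hodge structure --- concretely, the skew-self-adjointness forced by the polarization on this rank-three local system, which can equally be read off directly from the operator constructed above --- gives $a_2(u)=\tfrac32\,a_3'(u)$. Granting this, Abel's identity for $\varOmega_3(u)=W[\mu^1_{2,1},\mu^1_{2,2},\mu^1_{2,3}](u)$ yields
\begin{align}
D\log\varOmega_3(u)=-\frac{a_2(u)}{a_3(u)}=-\frac32\,D\log\bigl[u^2(4-u)(16-u)\bigr]=\frac32\,D\log\frac{1}{u^2(4-u)(16-u)},
\end{align}
which is precisely \eqref{eq:W_ODE_M2_prep0}; the linear independence of the three $\mu$'s on $(0,4)$ --- which guarantees $\varOmega_3\not\equiv0$, so that taking logarithms is legitimate --- follows from their distinct $u\to0^+$ asymptotics (and is reconfirmed in \S\ref{subsec:detM2}, where the relevant constant is computed).

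I expect the second paragraph to be the crux. Two points are delicate there: first, justifying all the integrations by parts uniformly over the whole range $0<u<4$, which is exactly the role of the estimates \eqref{eq:IK_asympt}--\eqref{eq:Klog_sup}; and second, seeing that the reduction actually closes up at order three rather than at the naive order five attached to a product of four Bessel functions --- this is where the engineered linear combination in $\mu^1_{2,1}$, together with the Bessel Wronskian relation, does the real work. Once the common third-order equation and the shape of $a_3,a_2$ are secured, the passage to \eqref{eq:W_ODE_M2_prep0} is the one-line Abel computation displayed above.
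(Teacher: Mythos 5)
Your proposal is correct and follows essentially the same route as the paper: both establish that $\mu^1_{2,1},\mu^1_{2,2},\mu^1_{2,3}$ lie in the kernel of Vanhove's third-order operator $\widetilde L_3$ (with the weights $1,4$ in $\mu^1_{2,1}$ chosen precisely to cancel the inhomogeneous boundary terms, since $\widetilde L_3\IvKM(1,4;1|u)=-3$ and $\widetilde L_3\IKvM(1,4;1|u)=\tfrac34$), and then read off \eqref{eq:W_ODE_M2_prep0} from the ratio of the top two coefficients via Abel's identity, which the paper carries out equivalently by row reduction of the differentiated Wro\'nskian. The only cosmetic difference is that you infer $a_2=\tfrac32 a_3'$ from (anti-)self-adjointness of the Picard--Fuchs operator, whereas the paper simply quotes the explicit operator \eqref{eq:VL3} from Vanhove's Table 1 and verifies $6u(u^2-15u+32)=\tfrac32\frac{\D}{\D u}\left[u^2(u-4)(u-16)\right]$ directly.
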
\begin{proof}Using integration by parts in the variable $t$, one can verify that the following holonomic differential operator \cite[][Table 1, $n=4$]{Vanhove2014Survey}\begin{align}\begin{split}\widetilde L_3:={}&
u^2 (u - 4) (u - 16)D^3+6 u (u^2 - 15 u + 32)D^{2}\\&+(7 u^2 - 68 u + 64)D^{1}+(u-4)D^{0}\label{eq:VL3}
\end{split}\end{align} annihilates  every member of the set $\{\mu^1_{2,1}(u),\mu^1_{2,2}(u),\mu^1_{2,3}(u)\}$, for $u\in(0,4)$.

With the Kronecker delta\begin{align}
\delta_{i,j}=\begin{cases}0, & \text{if }i\neq j, \\
1, & \text{if }i=j, \\
\end{cases}
\end{align}we can show that \begin{align}\begin{split}&
D^1W[\mu^1_{2,1}(u),\mu^1_{2,2}(u),\mu^1_{2,3}(u)]=\sum_{k=1}^3\det(D^{i+\delta_{i,k}-1}\mu^1_{2,j}(u))_{1\leq i,j\leq 3}\\={}&\det(D^{i+\delta_{i,3}-1}\mu^1_{2,j}(u))_{1\leq i,j\leq 3}=\det \begin{pmatrix}D^{0}\mu^1_{2,1}(u)&D^{0}\mu^1_{2,2}(u)&D^{0}\mu^1_{2,3}(u) \\
D^1\mu^1_{2,1}(u)&D^1\mu^1_{2,2}(u)&D^1\mu^1_{2,3}(u) \\
D^3\mu^1_{2,1}(u)&D^3\mu^1_{2,2}(u)&D^3\mu^1_{2,3}(u) \\
\end{pmatrix}\\={}&-\frac{6 u (u^2 - 15 u + 32)}{u^2 (u - 4) (u - 16)}\det \begin{pmatrix}D^{0}\mu^1_{2,1}(u)&D^{0}\mu^1_{2,2}(u)&D^{0}\mu^1_{2,3}(u) \\
D^1\mu^1_{2,1}(u)&D^1\mu^1_{2,2}(u)&D^1\mu^1_{2,3}(u) \\
D^2\mu^1_{2,1}(u)&D^2\mu^1_{2,2}(u)&D^2\mu^1_{2,3}(u) \\
\end{pmatrix}.\end{split}\label{eq:W_ODE_M2_prep}
\end{align}Here, in the last step, we have subtracted linear combinations of the first two rows from the last row in the penultimate determinant, while appealing to the homogeneous differential equations $ \widetilde L_3\mu^1_{2,k}(u)=0$ for $ k\in\{1,2,3\},u\in(0,4)$. Clearly, the differential equation in   \eqref{eq:W_ODE_M2_prep} is equivalent to \eqref{eq:W_ODE_M2_prep0}.   \end{proof}\begin{remark}After carefully collecting boundary contributions to the Newton--Leibniz formula (see Lemma \ref{lm:VanhoveLn} for technical details), one can show that   $\widetilde L_{3}\IvKM(1,4;1|u)=-3$ holds for $u\in(0,16)$ and $ \widetilde L_{3}\IKvM(1,4;$ $1|u)=\frac{3}{4}$ holds for $u\in(0,  \infty)$. This justifies our choice of the particular linear combination in $\mu^1_{2,1}(u)=\frac{1}{5}\IvKM(1,4;1|u)+\frac{4}{5}\IKvM(1,4;1|u)$. The homogeneous differential equation $ \widetilde L_3\mu^1_{2,1}(u)=0$ was also crucially important in a previous study \cite[][\S5]{Zhou2017WEF} of the single-scale 6-loop sunrise diagram in two-di\-men\-sion\-al quantum field theory.\eor\end{remark}
\subsection{Reduction to $ \det\mathbf M_2$\label{subsec:detM2}}
We recall that the modified Bessel functions of first order are related to derivatives of their counterparts of zeroth order:\begin{align}
I_{1}(t)=\frac{\D I_0(t)}{\D t},\quad K_1(t)=-\frac{\D K_0(t)}{\D t},
\end{align}and we have a bound\begin{align}
\sup_{t>0}\frac{|tK_1(t)-1|}{t(1+|\log t|)}<\infty.\label{eq:K1_sup}
\end{align}Reserving the symbol $ D^1$ for partial derivatives in the variable $u$, we have \begin{align}
D^{1}I_0(\sqrt{u}t)=\frac{tI_{1}(\sqrt{u}t)}{2\sqrt{u}},\quad D^{1}K_0(\sqrt{u}t)=-\frac{tK_{1}(\sqrt{u}t)}{2\sqrt{u}}.
\end{align}This motivates us to introduce additional short-hand notations, to accommodate  for derivatives of two-scale Bessel moments $ \IvKM$ and $ \IKvM$ with respect to $u$.
\begin{definition}\label{defn:IpKM_IKpM}We write $ \IpKM$ (resp.~$ \IKpM$) for the replacement of    one $I_0(t)$ (resp.~$K_0(t)$) factor in the single-scale Bessel moments by one $ I_1(\sqrt{u}t)$ (resp.~$-K_1(\sqrt{u}t)$) factor. Concretely speaking, we define\begin{align}
\IpKM(a+1,b;n|u):={}&+\int_0^\infty I_1(\sqrt{u}t)[I_0(t)]^a[K_0(t)]^bt^{n+1}\D t,\\\IKpM(a,b+1;n|u):={}&-\int_0^\infty K_1(\sqrt{u}t)[I_0(t)]^a[K_0(t)]^bt^{n+1}\D t,
\end{align}for certain non-negative integers $ a,b,n\in\mathbb Z_{\geq0}$ that guarantee convergence of these  integrals  for a given  parameter $u>0$.     \eor\end{definition}
With the understanding that $ D^mf(1)=\D^mf(u)/\D u^m|_{u=1}$, we now investigate \begin{align}
\varOmega_3(1)={}&\det \begin{pmatrix}D^{0}\mu^1_{2,1}(1)&D^{0}\mu^1_{2,2}(1)&D^{0}\mu^1_{2,3}(1) \\
D^{1}\mu^1_{2,1}(1)&D^{1}\mu^1_{2,2}(1)&D^{1}\mu^1_{2,3}(1) \\
D^{2}\mu^1_{2,1}(1)&D^{2}\mu^1_{2,2}(1)&D^{2}\mu^1_{2,3}(1) \\
\end{pmatrix}.\end{align}To save space for matrix entries, we also define
\begin{align}
\left\{\begin{array}{l}\acute\mu^\ell_{2,1}(u)=\frac{\IpKM(1,4;2\ell-1|u)+4\IKpM(1,4;2\ell-1|u)}{5},\\\acute\mu^\ell_{2,2}(u)=\IpKM(2,3;2\ell-1|u),\\\acute\mu^\ell_{2,3}(u)=\IKpM(2,3;2\ell-1|u).\end{array}\right.
\end{align}\begin{proposition}[Factorization of   $\varOmega_3(1)$]\label{prop:W_M2_1}We have the following identity:\begin{align}
\varOmega_3(1)=\frac{\IKM(1,2;1)}{2^{3}}\det\mathbf M_2.\label{eq:W_detM2_fac}
\end{align}\end{proposition}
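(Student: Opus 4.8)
The plan is to expand the $3\times 3$ Wro\'nskian determinant $\varOmega_3(1)$ by differentiating each column and then recognizing the resulting $3\times 3$ array of Bessel-moment integrals as (a multiple of) a product of $2\times 2$ determinants. First I would record that column $j$ of $\varOmega_3(1)$ has entries $D^0\mu^1_{2,j}(1)$, $D^1\mu^1_{2,j}(1)$, $D^2\mu^1_{2,j}(1)$; by the chain rule $D^1I_0(\sqrt ut)=tI_1(\sqrt ut)/(2\sqrt u)$ and $D^1K_0(\sqrt ut)=-tK_1(\sqrt ut)/(2\sqrt u)$, the first derivatives at $u=1$ are exactly the $\acute\mu^1_{2,j}(1)$ defined just above (up to the factor $1/2$ from $1/(2\sqrt u)$), and the second derivatives $D^2\mu^1_{2,j}$ give integrands with either $I_1^2$-type, $K_1^2$-type, or mixed second-order pieces. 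The key simplification is that $I_1(t)^2$, $K_1(t)^2$, $I_1(t)K_1(t)$ and the lower-order remainders can all, after integration by parts in $t$, be re-expressed in terms of the $\IKM(a,b;n)$ with $a+b=5$ that form the entries of $\mathbf M_2$; in particular at $u=1$ one uses the Bessel Wro\'nskian identity $I_0(t)K_1(t)+I_1(t)K_0(t)=1/t$ to collapse the mixed terms.

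The core of the argument is a Wro\'nskian factorization lemma of the following shape: if $f_j(u)=\int_0^\infty \phi(\sqrt ut)\,g_j(t)\,t\,\D t$ for a common "kernel" $\phi$ (here assembled from $I_0,K_0$) and individual weights $g_j$ built from $[I_0(t)]^a[K_0(t)]^b$, then the $3\times 3$ Wro\'nskian in $u$, evaluated at the special point $u=1$ where $\phi(\sqrt ut)$ becomes one more $I_0$ or $K_0$ factor, factors as a constant (carrying the powers of $2$ and the normalization $\IKM(1,2;1)$) times the $2\times 2$ minor determinant built from $\int [I_0]^a[K_0]^b t^{2b'-1}\,\D t$ with $b'\in\{1,2\}$ — i.e.\ exactly $\det\mathbf M_2$. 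Concretely I would: (i) perform the column differentiations and rewrite every entry of the resulting matrix as an honest single-scale Bessel moment at $u=1$; (ii) use row operations (subtracting multiples of row 1 and row 2, exploiting the $t$-integration-by-parts relations among $\IKM(a,b;n)$ with fixed $a+b=5$) to clear the third row down to a single "new" integral proportional to $\IKM(1,2;1)$; (iii) read off that what remains in the top-left $2\times2$ block is precisely $\mathbf M_2$, with its two columns indexed by $t^1$ and $t^3$. The power $2^3$ in the denominator should emerge from three factors of $1/2$ (two from the two differentiations $D^1,D^2$ each contributing $1/(2\sqrt u)$, one more from a final normalization), and the appearance of $\IKM(1,2;1)=\int_0^\infty I_0(t)K_0(t)t\,\D t$ comes from the Bessel cross-product term that survives the row reduction.

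The main obstacle I anticipate is bookkeeping the boundary terms and the lower-order pieces that appear when one differentiates twice and then integrates by parts in $t$: $D^2\mu^1_{2,j}(1)$ is not simply an $\IpKM$-type or $\IKpM$-type integral but picks up contributions from $\D(tI_1(t))/\D t$, $\D(tK_1(t))/\D t$ and, critically, from the non-integrable-looking $1/t$ behaviour in $I_1(t)K_1(t)$ near $t=0$; one needs the bounds \eqref{eq:IK_sup}, \eqref{eq:Klog_sup}, \eqref{eq:K1_sup} to justify that the relevant integrals converge and that no boundary term at $t=0$ or $t=\infty$ is dropped. A second, more mechanical, difficulty is verifying that after all row operations the coefficient in front of $\det\mathbf M_2$ is exactly $\IKM(1,2;1)/2^3$ and not off by a sign or a stray factor; this I would pin down by expanding the $3\times3$ determinant along its third row and matching term by term against the cofactor expansion of $\det\mathbf M_2$, using the fact (provable by the same $t$-integration-by-parts that underlies the operator $\widetilde L_3$) that the "extra" integrals generated are scalar multiples of $\IKM(1,2;1)$ rather than genuinely new transcendental constants.
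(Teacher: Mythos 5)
Your overall architecture---rewrite the three Wro\'nskian rows as rows of Bessel moments, kill one column via $I_0(t)K_1(t)+I_1(t)K_0(t)=1/t$ to produce $-\IKM(1,2;1)$, then expand by cofactors to expose $\det\mathbf M_2$---is exactly the paper's, and your identification of where $\IKM(1,2;1)$ and the overall factor $2^{3}$ come from is essentially right. But your treatment of the second-derivative row rests on a false premise and misses the one idea that makes the proof work. In $\mu^1_{2,j}(u)$ only a \emph{single} factor of the integrand, namely $I_0(\sqrt{u}t)$ or $K_0(\sqrt{u}t)$, depends on $u$; hence $D^2\mu^1_{2,j}(u)$ never produces integrands of type $[I_1]^2$, $[K_1]^2$ or $I_1K_1$, and there are no ``mixed second-order pieces'' to be tamed by integration by parts in $t$. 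The step you actually need is the second-order Bessel equation in the variable $u$, namely $(uD^2+D^1)\phi(\sqrt{u}t)=\frac{t^2}{4}\phi(\sqrt{u}t)$ for $\phi\in\{I_0,K_0\}$: adding $u^{-1}$ times the $D^1$-row to the $D^2$-row turns the latter into $\frac{1}{4u}$ times the $t^{2}$-weighted moments $\mu^2_{2,j}(u)$, which at $u=1$ are precisely the entries $\IKM(\,\cdot\,,\cdot\,;3)$ of $\mathbf M_2$. Together with the factor $\frac{1}{2\sqrt{u}}$ from the $D^1$-row this yields the exact identity that $2^{3}u^{3/2}\varOmega_3(u)$ equals the determinant whose rows are $\mu^1_{2,j}(u)$, $\acute\mu^1_{2,j}(u)$, $\mu^2_{2,j}(u)$, with no boundary terms and no integration by parts in $t$ anywhere; the ``main obstacle'' you anticipate is a phantom, but so is your proposed remedy, and without the Bessel equation in $u$ your plan for the $D^2$ row does not go through.

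A second, more mechanical slip: the elimination is a \emph{column} operation, not a row operation. After setting $u=1$, the first and third rows have equal second and third entries, since $\mu^\ell_{2,2}(1)=\mu^\ell_{2,3}(1)=\IKM(2,3;2\ell-1)$; subtracting column $2$ from column $3$ therefore zeroes those two positions and leaves only $\acute\mu^1_{2,3}(1)-\acute\mu^1_{2,2}(1)=-\IKM(1,2;1)$ in the middle row, after which one expands along the third \emph{column}. The third row itself consists of $\mathbf M_2$-entries and is not reducible to anything proportional to $\IKM(1,2;1)$. These points are fixable, but as written the reduction of the second-derivative row and the subsequent elimination would both fail.
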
\begin{proof}With the Bessel differential equations $ (uD^2+D^1)I_{0}(\sqrt{u}t)=\frac{t^2}{4}I_0(\sqrt{u}t)$ and $(uD^2+D^1)K_{0}(\sqrt{u}t)=\frac{t^2}{4}K_0(\sqrt{u}t) $, we can verify\begin{align}
2^{3}u^{3/2}\varOmega_3(u)=\det\begin{pmatrix}\mu^1_{2,1}(u) & \mu^1_{2,2}(u) & \mu^1_{2,3}(u) \\
\acute\mu^1_{2,1}(u) & \acute\mu^1_{2,2}(u) & \acute\mu^1_{2,3}(u) \\
\mu^2_{2,1}(u) & \mu^2_{2,2}(u) & \mu^2_{2,3}(u) \\
\end{pmatrix}\label{eq:Omega3u_alt}
\end{align}for all $ u\in(0,4)$, upon using elementary row operations.  In particular, we may identify $ 2^{3}\varOmega_3(1)$ with  \begin{align}
\det \begin{pmatrix}\IKM(1,4;1)&\IKM(2,3;1)&\IKM(2,3;1) \\
\acute\mu^1_{2,1}(1)&\acute\mu^1_{2,2}(1)&\acute\mu^1_{2,3}(1) \\
\IKM(1,4;3)&\IKM(2,3;3)&\IKM(2,3;3) \\
\end{pmatrix}.
\end{align}Now,  subtracting the second column from the last column in the  determinant above, while keeping in mind that $ I_0(t)K_1(t)+I_1(t)K_0(t)=\frac1t$ leads to $ \acute\mu^1_{2,3}(1)-\acute\mu^1_{2,2}(1)=-\IKM(1,2;1)$, we may  equate  $ 2^{3}\varOmega_3(1)$  with\begin{align}
\det \begin{pmatrix}\IKM(1,4;1)&\IKM(2,3;1)&0 \\
\acute\mu^1_{2,1}(1)&\acute\mu^1_{2,2}(1)&-\IKM(1,2;1) \\
\IKM(1,4;3)&\IKM(2,3;3)&0 \\
\end{pmatrix},\label{eq:detM2_fac_prep}
\end{align}thereby establishing our claim in  \eqref{eq:W_detM2_fac}.     \end{proof}

  In the next proposition, we examine the Wro\'nskian determinant in the $u\to0^+$ limit.
 \begin{proposition}[Factorization of $\varOmega_3(0^+)$]The limit\label{prop:W_M2_0}\begin{align}
\lim_{u\to0^+}
u^{3}\varOmega_3(u)=\frac{[\IKM(1,3;1)]^2}{2^{3}5}
\end{align}entails \begin{align}
\varOmega_3(u)=\frac{\pi^{4}}{2^{2}5[u^{2}(4-u) ( 16-u)]^{3/2}},\quad \forall u\in(0,4).\label{eq:W_detM2_eval}
\end{align}In particular, this implies the evaluation   $ \det \mathbf M_2=\frac{2\pi^{3}}{\sqrt{3^35^5}}$.\end{proposition}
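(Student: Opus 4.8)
The plan is to integrate the first-order ODE of Lemma~\ref{lm:W_ODE_M2}, fix the resulting integration constant from the behaviour at $u=0^+$, and then specialise at $u=1$. \emph{Integrating the ODE.} Since \eqref{eq:W_ODE_M2_prep0} is a first-order linear equation, it gives $\varOmega_3(u)=C[u^2(4-u)(16-u)]^{-3/2}$ on $(0,4)$ for some constant $C$; peeling off $u^3$ leaves $u^3\varOmega_3(u)=C[(4-u)(16-u)]^{-3/2}\to C/2^9$ as $u\to0^+$. Thus the asserted limit $\lim_{u\to0^+}u^3\varOmega_3(u)=[\IKM(1,3;1)]^2/(2^3\cdot5)$ amounts to $C=2^6[\IKM(1,3;1)]^2/5$, and, feeding in the classical evaluation $\IKM(1,3;1)=\pi^2/16$, this becomes $C=\pi^4/(2^2\cdot5)$---which is exactly \eqref{eq:W_detM2_eval}.

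\emph{The boundary limit.} I would establish $\lim_{u\to0^+}u^3\varOmega_3(u)=[\IKM(1,3;1)]^2/(2^3\cdot5)$ through the alternative representation~\eqref{eq:Omega3u_alt}, which recasts it as $u^3\varOmega_3(u)=\tfrac18 u^{3/2}\det M(u)$, where $M(u)$ denotes the $3\times3$ matrix on the right-hand side of~\eqref{eq:Omega3u_alt}. The nine entries of $M(u)$ are analysed as $u\to0^+$ using the small-argument expansions $I_0(z)=1+O(z^2)$, $I_1(z)=z/2+O(z^3)$, $K_0(z)=-\log(z/2)-\gamma+O(z^2|\log z|)$, $K_1(z)=1/z+O(z|\log z|)$, with the large-$t$ behaviour of the integrands controlled by the uniform bounds \eqref{eq:IK_sup}--\eqref{eq:K1_sup}. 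The outcomes of this bookkeeping are: the entries $\IvKM(2,3;1|u)$ and $\IvKM(2,3;3|u)$ converge to the finite single-scale moments $\IKM(1,3;1)$ and $\IKM(1,3;3)$; the entry $\acute\mu^1_{2,1}(u)$ is dominated by $\tfrac45\IKpM(1,4;1|u)$ and behaves like $-\tfrac45\IKM(1,3;1)\,u^{-1/2}$; the entry $\mu^2_{2,3}(u)=\IKvM(2,3;3|u)$ diverges like $1/(4u)$, the constant coming from $I_0(t)^2K_0(t)^2\sim(2t)^{-2}$ at infinity together with $\int_0^\infty sK_0(s)\,\D s=1$; and each of the five remaining entries is of order at most $u^{-1/2}|\log u|$. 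Expanding $\det M(u)$ over its six permutations, a size count then shows that exactly one product---the transposition term $-\mu^1_{2,2}(u)\,\acute\mu^1_{2,1}(u)\,\mu^2_{2,3}(u)$---is of the maximal order $u^{-3/2}$, with coefficient $[\IKM(1,3;1)]^2/5$, while every other term is $o(u^{-3/2})$. Hence $\det M(u)\sim[\IKM(1,3;1)]^2/(5u^{3/2})$, so that $u^3\varOmega_3(u)\to[\IKM(1,3;1)]^2/(2^3\cdot5)$, as required. (One could instead run a Frobenius analysis near $u=0$ of the original Wro\'nskian $W[\mu^1_{2,1},\mu^1_{2,2},\mu^1_{2,3}]$, but \eqref{eq:Omega3u_alt} is preferable: it trades derivatives---which produce a tangle of $\log u$, $u^{-1}$ and $u^{-2}$ singularities whose leading parts must conspire to cancel---for honest Bessel integrals with transparent small-$u$ behaviour.)

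\emph{Specialisation to $\det\mathbf M_2$.} Putting $u=1$ in \eqref{eq:W_detM2_eval} gives $\varOmega_3(1)=\pi^4/(2^2\cdot5\cdot45^{3/2})$, and Proposition~\ref{prop:W_M2_1} then yields $\det\mathbf M_2=2^3\varOmega_3(1)/\IKM(1,2;1)$; inserting the classical value $\IKM(1,2;1)=\pi/\sqrt{27}$ collapses this to $\det\mathbf M_2=2\pi^3/\sqrt{3^3\cdot5^5}$.

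The hard part will be the boundary analysis in the second paragraph. One must (i) pin down the exact leading coefficients of the two divergent entries $\IKpM(1,4;1|u)$ and $\IKvM(2,3;3|u)$, which requires splitting their $t$-integrals into an explicitly computable part plus a tail governed by $I_0(t)K_0(t)\sim(2t)^{-1}$---this is where \eqref{eq:IK_sup} and \eqref{eq:K1_sup} are indispensable; and (ii) secure uniform-in-$u$ upper bounds on the remaining entries (in particular on the logarithmically divergent $\IKvM(2,3;1|u)$ and $\IKvM(1,4;1|u)$, where \eqref{eq:Klog_sup} enters), so that the six permutation terms can be compared and all but the $(1\,2)$-term discarded. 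Once this order bookkeeping is in place, the rest is routine.
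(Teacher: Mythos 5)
Your proposal is correct and follows essentially the same route as the paper's own proof: the paper likewise integrates the Wro\'nskian ODE to get $\varOmega_3(u)=C[u^2(4-u)(16-u)]^{-3/2}$, pins down $C$ by the $u\to0^+$ asymptotics of the matrix representation \eqref{eq:Omega3u_alt} (in the equivalent rescaled form \eqref{eq:Omega3u_0_prefactor}), isolates exactly the same dominant transposition term $-\mu^1_{2,2}\,\acute\mu^1_{2,1}\,\mu^2_{2,3}$ with the same leading constants $\IKM(1,3;1)$, $-\tfrac45\IKM(1,3;1)u^{-1/2}$ and $\tfrac{1}{4u}$, and then specialises at $u=1$ via Proposition~\ref{prop:W_M2_1}. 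Your entry-by-entry order bookkeeping matches the paper's (your bound $O(u^{-1/2}|\log u|)$ on $\acute\mu^1_{2,3}(u)$ is even slightly sharper than the paper's $O(1/u)$, though the coarser bound already suffices), so there is nothing further to add.
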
\begin{proof}From \eqref{eq:W_ODE_M2_prep0}, we know that $ [u^{2}(4-u) ( 16-u)]^{3/2}\varOmega_3(u)$ remains constant for $u\in(0,4)$. We will determine this constant by computing \begin{align}2^{9}\lim_{u\to0^+}
u^{3}\varOmega_3(u)\label{eq:W_detM2_lim}
\end{align}from\begin{align}
2^{3}u^{3}\varOmega_3(u)={}&\det \left(\begin{array}{rrr}\mu^1_{2,1}(u) & \mu^1_{2,2}(u) & \mu^1_{2,3}(u) \\
\sqrt{u}\acute\mu^1_{2,1}(u) &\sqrt{u} \acute\mu^1_{2,2}(u) &\sqrt{u} \acute\mu^1_{2,3}(u) \\
u\mu^2_{2,1}(u) & u\mu^2_{2,2}(u) & u\mu^2_{2,3}(u) \\
\end{array}\right).\label{eq:Omega3u_0_prefactor}
\end{align}

In the $u\to0^+$ regime, we have [cf.~\eqref{eq:IK_sup} and \eqref{eq:K1_sup}]\begin{align}\begin{split}
\mu^1_{2,3}(u)={}&\int_{0}^{\infty}K_0(\sqrt{u}t)[I_{0}(t)K_{0}(t)]^{2}t\D t\\={}&O\left( \int_{0}^{\infty}K_0(\sqrt{u}t)\D t \right)=O\left( \frac{1}{\sqrt{u}} \right),\label{eq:IKvM231}\end{split}\intertext{}\begin{split}-\sqrt{u}\IKpM(1,4;1|u)={}&\int_0^\infty I_0(t)[K_0(t)]^3 t\D t+\int_0^\infty [\sqrt{u}tK_{1}(\sqrt{u}t)-1]I_0(t)[K_0(t)]^3 t\D t\\={}&\IKM(1,3;1)+O(\sqrt{u}\log u),\end{split}\label{eq:K-1}
\end{align} along with several other asymptotic expansions, so $ 2^{3}u^{3}\varOmega_3(u)$ becomes\begin{align}
\det \begin{pmatrix}O(\log u)&\IKM(1,3;1)+O(u)&O(1/\sqrt{u}) \\
-\frac{4\IKM(1,3;1)}{5}+O(\sqrt{u}\log u)&O(u)&{\sqrt u}\acute\mu^1_{2,3}(u) \\
O(u\log u)&O(u)&u\mu^2_{2,3}(u) \\
\end{pmatrix}.\end{align}Noting that  [cf.~\eqref{eq:IK_sup}]\begin{align}\begin{split}
-{\sqrt u}\acute\mu^1_{2,3}(u)={}&\int_{0}^{\infty}\sqrt{u}K_1(\sqrt{u}t)[I_{0}(t)K_{0}(t)]^{2}t^{2}\D t\\={}&O\left( \int_{0}^{\infty}\sqrt{u}K_1(\sqrt{u}t)t\D t \right)=O\left( \frac{1}{\sqrt{u}} \right),\end{split}\label{eq:IKpM231}\intertext{and  [cf.~\eqref{eq:IK_sup}]}\begin{split}u\mu^2_{2,3}(u)={}&\frac{u}{4}\int_{0}^{\infty}K_0(\sqrt{u}t)t\D t+u\int_{0}^{\infty}K_0(\sqrt{u}t)\left\{ [I_{0}(t)K_{0}(t)]^{2}-\frac{1}{4t^{2}} \right\}t^{3}\D t\\={}&\frac{1}{4}\int_{0}^{\infty}K_0(t)t\D t+O\left( u \int_{0}^{\infty}K_0(\sqrt{u}t)\D t\right)\\={}&\frac{1}{4}+O(\sqrt{u}),\end{split}
\end{align} we find\begin{align}\begin{split}&
2^{3}u^{3}\varOmega_3(u)\\={}&
\det \begin{pmatrix}O(\log u)&\IKM(1,3;1)+O(u)&O(1/\sqrt{u}) \\
-\frac{4\IKM(1,3;1)}{5}+O(\sqrt{u}\log u)&O(u)&O(1/\sqrt{u}) \\
O(u\log u)&O(u)&\frac{1}{4}+O(\sqrt{u}) \\
\end{pmatrix}\\={}&\frac{[\IKM(1,3;1)]^2}{5}+O(\sqrt{u}\log u).\end{split}
\end{align} As we have $ \IKM(1,3;1)=\frac{\pi^2}{2^{4}}$  \cite[][(55)]{BBBG2008},  we see that the limit in \eqref{eq:W_detM2_lim} must be equal to $ \frac{\pi^4}{2^{2}5}$.

 Recalling the well-known evaluation $ \IKM(1,2;1)=\frac{\pi}{3\sqrt{3}}$ from \cite[][(23)]{BBBG2008}, we can compute $ \det \mathbf M_2=\frac{2\pi^{3}}{\sqrt{3^35^5}}$ with the aid of  \eqref{eq:W_detM2_fac} and \eqref{eq:W_detM2_eval}. \end{proof}
\section{An algebraic evaluation of $ \det \mathbf N_2$\label{sec:detN2}}

In \S\ref{sec:detM2}, we built $ \det \mathbf M_2$  on the knowledge of (the retroactively defined $ 1\times 1$ ``determinants'') $ \det\mathbf M_1=\IKM(1,2;1)$ and $ \det\mathbf N_1=\IKM(1,3;1)$. Our task in this section is to  compute $ \det \mathbf N_2$ from $\det\mathbf M_2$ and $\det \mathbf N_1$.\subsection{A $4\times 4$ Wro\'nskian determinant\label{subsec:W4x4}}Setting \begin{align}
\left\{\begin{array}{l}\nu^\ell_{2,1}(u)=\frac{\IvKM(1,5;2\ell-1|u)+5\IKvM(1,5;2\ell-1|u)}{6},\\\nu^\ell_{2,2}(u)=\IvKM(2,4;2\ell-1|u),\\\nu^\ell_{2,3}(u)=\IvKM(3,3;2\ell-1|u),\\\nu^\ell_{2,4}(u)=\IKvM(2,4;2\ell-1|u),\end{array}\right.
\end{align} and\begin{align}
\left\{\begin{array}{l}\acute\nu^\ell_{2,1}(u)=\frac{\IpKM(1,5;2\ell-1|u)+5\IKpM(1,5;2\ell-1|u)}{6},\\\acute\nu^\ell_{2,2}(u)=\IpKM(2,4;2\ell-1|u),\\\acute\nu^\ell_{2,3}(u)=\IpKM(3,3;2\ell-1|u),\\\acute\nu^\ell_{2,4}(u)=\IKpM(2,4;2\ell-1|u),\end{array}\right.
\end{align}we begin our study of  the Wro\'nskian determinant $ \omega_4(u):=W[\nu^1_{2,1}(u),\nu^1_{2,2}(u),\nu^1_{2,3}(u),\nu^1_{2,4}(u)]$ from the next lemma.\begin{lemma}[Vanhove differential equation for $ \omega_4(u)$]For $0<u<1$, the Wro\'nskian determinant $\omega_4(u):=W[\nu^1_{2,1}(u),\nu^1_{2,2}(u),\nu^1_{2,3}(u),\nu^1_{2,4}(u)]$ satisfies the following differential equation:\begin{align}
D^{1}\omega_4(u)=2\omega_4 (u)D^1\log\frac{1}{u^2 (1-u)(9-u) ( 25-u)}.\label{eq:W_ODE_M2_prep0}
\end{align}\end{lemma}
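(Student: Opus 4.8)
The plan is to repeat, one order higher, the two-step argument behind Lemma~\ref{lm:W_ODE_M2}: produce the Vanhove operator that annihilates all four entry-functions, then extract the Wro\'nskian ODE via Abel's identity. \emph{Step one: the Vanhove operator.} By repeated integration by parts in $t$, together with the Bessel identities $(uD^2+D^1)I_0(\sqrt{u}t)=\tfrac{t^2}{4}I_0(\sqrt{u}t)$ and $(uD^2+D^1)K_0(\sqrt{u}t)=\tfrac{t^2}{4}K_0(\sqrt{u}t)$, I would verify that the fourth-order holonomic operator
\[
\widetilde L_4:=u^2(u-1)(u-9)(u-25)D^4+2D^1\!\bigl[u^2(u-1)(u-9)(u-25)\bigr]D^3+a_2(u)D^2+a_1(u)D^1+a_0(u)D^0
\]
taken from \cite[][Table 1, $n=5$]{Vanhove2014Survey} annihilates every member of $\{\nu^1_{2,1}(u),\nu^1_{2,2}(u),\nu^1_{2,3}(u),\nu^1_{2,4}(u)\}$ for $u\in(0,1)$; here the $D^3$-coefficient equals $\tfrac42 D^1$ of the leading coefficient, exactly paralleling the relation between the $D^2$-coefficient of $\widetilde L_3$ and $\tfrac32 D^1[u^2(u-4)(u-16)]$. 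The interval $(0,1)$ is precisely the common domain of absolute convergence of the four integrals, the binding case being $\nu^1_{2,3}(u)=\IvKM(3,3;1|u)$, whose integrand decays like $t^{-2}e^{-(1-\sqrt u)t}$ as $t\to\infty$. Exactly as in \S\ref{subsec:W3x3}, the individual pieces $\IvKM(1,5;1|u)$ and $\IKvM(1,5;1|u)$ satisfy only \emph{inhomogeneous} equations $\widetilde L_4\IvKM(1,5;1|u)=c_1$ and $\widetilde L_4\IKvM(1,5;1|u)=c_2$ with constants $c_1,c_2$ on the right, coming from the boundary contributions to the Newton--Leibniz formula (see Lemma~\ref{lm:VanhoveLn}); the weighting in $\nu^1_{2,1}(u)=\tfrac16\IvKM(1,5;1|u)+\tfrac56\IKvM(1,5;1|u)$ is chosen so that $\tfrac16 c_1+\tfrac56 c_2=0$, forcing $\widetilde L_4\nu^1_{2,1}(u)=0$.

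\emph{Step two: Abel's identity.} Differentiating $\omega_4(u)=\det(D^{i-1}\nu^1_{2,j}(u))_{1\leq i,j\leq4}$ row by row, the three terms carrying a repeated row drop out, leaving $D^1\omega_4(u)=\det(D^{i+\delta_{i,4}-1}\nu^1_{2,j}(u))_{1\leq i,j\leq4}$, a determinant whose last row is $(D^4\nu^1_{2,1}(u),\dots,D^4\nu^1_{2,4}(u))$. Using $\widetilde L_4\nu^1_{2,k}(u)=0$ to express each $D^4\nu^1_{2,k}(u)$ through $D^3,D^2,D^1,D^0$, and then subtracting suitable multiples of the first three rows (which carry $D^0,D^1,D^2$) from the last, I can clear everything but the $D^3$ contribution and obtain
\[
D^1\omega_4(u)=-\frac{2D^1\!\bigl[u^2(u-1)(u-9)(u-25)\bigr]}{u^2(u-1)(u-9)(u-25)}\,\omega_4(u)=2\omega_4(u)\,D^1\log\frac{1}{u^2(1-u)(9-u)(25-u)},
\]
which is the asserted differential equation; the overall sign hidden in $u^2(1-u)(9-u)(25-u)=-u^2(u-1)(u-9)(u-25)$ is immaterial inside a logarithmic derivative.

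The one genuinely non-routine step is the first: confirming that $\widetilde L_4$, with exactly those coefficients, is the correct holonomic operator for these two-scale Bessel moments, and, above all, keeping careful track of the boundary terms so as to be sure that the specific combination $\nu^1_{2,1}$ --- rather than $\IvKM(1,5;1|u)$ alone --- lies in the kernel of the \emph{homogeneous} operator. Everything afterwards (the derivative-of-determinant expansion, the row reduction, and the identification of the subleading-over-leading coefficient ratio with $2D^1\log\frac{1}{u^2(1-u)(9-u)(25-u)}$, with weight $2=\tfrac42$ mirroring the weight $\tfrac32$ in Lemma~\ref{lm:W_ODE_M2}) is a mechanical transcription of \S\ref{subsec:W3x3} with $N=4$ in place of $N=3$.
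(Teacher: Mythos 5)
Your proposal is correct and follows essentially the same route as the paper: identify Vanhove's fourth-order operator $\widetilde L_4$ (whose $D^3$-coefficient is indeed $2\,D^1$ of the leading coefficient $u^2(u-1)(u-9)(u-25)$, and whose inhomogeneous boundary terms $-\tfrac{15}{2}$ and $\tfrac{3}{2}$ cancel under the weights $\tfrac16,\tfrac56$ defining $\nu^1_{2,1}$), then apply the Wro\'nskian/Abel argument exactly as in Lemma~2.2. The paper's own proof is just a terser version of the same two steps.
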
\begin{proof}Using integration by parts in the variable $t$, one can verify that the following holonomic differential operator \cite[][Table 1, $n=5$]{Vanhove2014Survey}\begin{align}\begin{split}\widetilde L_4:={}&
u^2(u-25) (u-9) (u-1) D^{4}+2 u (5 u^3-140 u^2+777 u-450)D^3\\{}&+(25 u^3-518 u^2+1839 u-450)D^{2}+(3 u-5) (5 u-57)D^{1}+(u-5)D^{0}\label{eq:VL4}
\end{split}\end{align} annihilates every member in the set $ \{\nu^1_{2,1}(u),\nu^1_{2,2}(u),\nu^1_{2,3}(u),\nu^1_{2,4}(u)\}$. One may then proceed as in Lemma   \ref{lm:W_ODE_M2}. \end{proof}\begin{remark}We have   $\widetilde L_{4}\IvKM(1,5;1|u)=-\frac{15}{2}$ for $u\in(0,25)$ and $ \widetilde L_{4}\IKvM(1,5;1|u)=\frac{3}{2}$ for $u\in(0,  \infty)$. Such computations will be  put into a broader context in   Lemma \ref{lm:VanhoveLn}.\eor\end{remark}
\subsection{Reduction to $\det\mathbf N_2$\label{subsec:detN2}}
We now describe an analog of Proposition \ref{prop:W_M2_1}.
\begin{proposition}[Factorization of $\omega_4(1^-)$]\label{prop:W_N2_1}We have the following identity:\begin{align}
\lim_{u\to1^-}(1-u)^2\omega_4(u)=-\frac{\IKM(1,3;1)}{2^{7}}\det\mathbf N_2.\label{eq:W_detN2_fac}
\end{align}\end{proposition}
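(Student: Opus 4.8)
The plan is to mimic the argument of Proposition~\ref{prop:W_M2_1} one dimension higher, with the complication that the natural ``collision'' point for the $\mathbf N_2$-reduction is $u\to1^-$ rather than $u=1$, which is why the statement is phrased as a limit with the prefactor $(1-u)^2$. First I would use the two Bessel differential equations $(uD^2+D^1)I_0(\sqrt u t)=\frac{t^2}{4}I_0(\sqrt u t)$ and $(uD^2+D^1)K_0(\sqrt u t)=\frac{t^2}{4}K_0(\sqrt u t)$, exactly as in \eqref{eq:Omega3u_alt}, to trade the second and higher $u$-derivatives in the $4\times4$ Wro\'nskian for multiplication by $t^2/4$ inside the integrals. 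Via elementary row operations this should convert $u^{\alpha}\omega_4(u)$ (for the appropriate power $\alpha$, presumably $u^{3}$ or similar, coming from the $\sqrt u$ denominators in $D^1I_0(\sqrt u t)$ etc.) into the determinant of a matrix whose rows are, in order, the four functions $\nu^1_{2,j}(u)$, their primed analogs $\acute\nu^1_{2,j}(u)$ times suitable powers of $\sqrt u$, and the ``$\ell=2$'' shifts $\nu^2_{2,j}(u)$ (and one further row, either a second primed shift or an $\acute\nu^2$), each entry an absolutely convergent two-scale Bessel moment.

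Next I would take $u\to1^-$. At $u=1$ the factors $I_0(\sqrt u t)=I_0(t)$ and $K_0(\sqrt u t)=K_0(t)$ degenerate, so $\nu^1_{2,1}(1),\dots$ become single-scale moments $\IKM$, and crucially several columns of the resulting matrix coincide or become linearly dependent — this is the source of the vanishing that forces the $(1-u)^2$ normalization. The mechanism should be: two (or more) of the first-row entries, after the column operations that produced the blocks $\mathbf M_2$ and $\mathbf N_1$ in the earlier sections, collapse onto columns of $\mathbf N_2$, while the off-diagonal structure supplies the scalar $-\IKM(1,3;1)/2^7$ together with a $\det\mathbf N_2$ minor. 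Concretely I expect to subtract one column from another and invoke a Wronskian-type identity for Bessel functions — here the relevant one is again $I_0(t)K_1(t)+I_1(t)K_0(t)=1/t$ (and possibly $I_1K_1$ relations) — to generate a column that is zero except for one entry equal to $\pm\IKM(1,3;1)$ (the natural ``$b=3$'' analog of the $\IKM(1,2;1)$ that appeared for $\mathbf M_2$). Expanding along that column, the surviving $3\times3$ (or after a further reduction, the relevant $2\times2$) minor is precisely $\det\mathbf N_2$ up to the stated power of $2$ and sign.

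The main obstacle, I expect, is bookkeeping the $u\to1^-$ asymptotics carefully enough to see that the limit $(1-u)^2\omega_4(u)$ exists and is finite and nonzero, and to pin down the exact constant $-1/2^7$ and the sign. Two issues need care: (i) the differential equation \eqref{eq:W_ODE_M2_prep0} shows $[u^2(1-u)(9-u)(25-u)]^{2}\omega_4(u)$ is constant on $(0,1)$, so $(1-u)^2\omega_4(u)$ does tend to a finite limit; but one must still evaluate that limit from the integral representation, which means expanding each matrix entry near $u=1$ and controlling the rate at which the degenerate columns approach linear dependence (a Taylor expansion in $(1-u)$ of $I_0(\sqrt u t)$ and $K_0(\sqrt u t)$, using $D^1 I_0(\sqrt u t)\big|_{u=1}=\tfrac12 tI_1(t)$ etc.); (ii) keeping the powers of $2$ straight through the row scalings $\sqrt u\,\acute\nu$ and $u\,\nu^2$ and the factor $2^{?}u^{?/2}$ pulled out front. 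I would handle (i) by writing $\omega_4(u)=\det(\text{rows})$ with the top row expanded to first order in $(1-u)$, so that the $O(1)$ part is a singular (rank-deficient) matrix and the $O(1-u)$ correction, paired with the near-degeneracy of another column, yields the $(1-u)^2$ scaling and an explicit determinant in single-scale moments; then identify that determinant with $-\IKM(1,3;1)\det\mathbf N_2/2^7$ by column operations and the Bessel Wronskian identity, just as in the proof of Proposition~\ref{prop:W_M2_1}. The remaining sections presumably then fix the overall constant of $\omega_4$ via a $u\to0^+$ bound estimate (the analog of Proposition~\ref{prop:W_M2_0}, using \eqref{eq:IK_sup}, \eqref{eq:Klog_sup}, \eqref{eq:K1_sup}), closing the loop to evaluate $\det\mathbf N_2=\pi^4/(2^6 3^2)$.
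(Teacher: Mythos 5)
Your overall skeleton --- the algebraic reduction of the Wro\'nskian via the Bessel differential equations, followed by a column elimination at $u=1$ that uses $I_0(t)K_1(t)+I_1(t)K_0(t)=1/t$ to produce a column supported on a single entry $-\IKM(1,3;1)$, and a final cofactor expansion onto $\det\mathbf N_2$ --- does match the endgame of the paper's proof. But the central mechanism you propose for the $(1-u)^2$ prefactor is wrong in direction, and the step you build on it would fail. You attribute the prefactor to columns of the limiting matrix ``coinciding or becoming linearly dependent,'' i.e.\ to $\omega_4(u)$ \emph{vanishing} to second order at $u=1$, and you plan to extract the limit by Taylor-expanding the matrix entries in powers of $(1-u)$ around a rank-deficient matrix. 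In fact $\omega_4(u)$ \emph{diverges} like $(1-u)^{-2}$ as $u\to1^-$ --- your own observation that $[u^2(1-u)(9-u)(25-u)]^{2}\omega_4(u)$ is constant already forces this --- and the divergence is carried entirely by the third column, the one built from $\nu^\ell_{2,3}(u)=\IvKM(3,3;2\ell-1|u)$ and $\acute\nu^\ell_{2,3}(u)$. Those integrands contain $I_0(\sqrt{u}t)[I_0(t)]^2[K_0(t)]^3$, which at $u=1$ decays only like a power of $t$; for $\ell=2$ the integrals are not even finite at $u=1$, so there is nothing to Taylor-expand there. (Columns $2$ and $4$ do agree in their unprimed rows at $u=1$, but they differ in the primed rows by exactly $-\IKM(1,3;1)$ per row, so no rank deficiency occurs and no vanishing of the determinant results.)

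The missing ingredient is the quantitative $u\to1^-$ asymptotics of that third column. The paper splits off the leading large-$t$ behaviour $[I_0(t)K_0(t)]^2-\tfrac{1}{4t^2}=O(t^{-2s-2})$ and invokes the Weber--Schafheitlin evaluations $\int_0^\infty I_0(\sqrt{u}t)K_0(t)\,t\,\D t=(1-u)^{-1}$, $\int_0^\infty I_1(\sqrt{u}t)K_0(t)\,\D t=-\log(1-u)/(2\sqrt{u})$ and $\int_0^\infty I_1(\sqrt{u}t)K_0(t)\,t^{2}\,\D t=2\sqrt{u}\,(1-u)^{-2}$ to show that $\acute\nu^1_{2,3}(u)=O(\log(1-u))$, $\nu^2_{2,3}(u)=O((1-u)^{-1})$ and $(1-u)^2\acute\nu^2_{2,3}(u)\to\tfrac12$. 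Hence $(1-u)^2$ times the third column tends to $(0,0,0,\tfrac12)^{\mathrm T}$, and cofactor expansion along that column produces the factor $-\tfrac12$ and the $3\times3$ determinant on which your column trick then correctly operates. Without this analysis you cannot establish that the limit is finite and nonzero, nor pin down the constant $2^{-7}$ and the sign.
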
\begin{proof}Through row operations and the Bessel differential equations for $I_0$ and $K_0$, we find\begin{align}
2^{6}u^{3}\omega_4(u)=\det\begin{pmatrix}\nu^1_{2,1}(u) & \nu^1_{2,2}(u) & \nu^1_{2,3}(u) &\nu^1_{2,4}(u)\\
\acute\nu^1_{2,1}(u) & \acute\nu^1_{2,2}(u) & \acute\nu^1_{2,3}(u)&\acute\nu^1_{2,4}(u) \\
\nu^2_{2,1}(u) & \nu^2_{2,2}(u) & \nu^2_{2,3}(u)&\nu^2_{2,4}(u) \\\acute\nu^2_{2,1}(u) & \acute\nu^2_{2,2}(u) & \acute\nu^2_{2,3}(u)&\acute\nu^2_{2,4}(u) \\
\end{pmatrix}
\end{align}for all $ u\in(0,1)$. In particular,  as $ u\to1^-$, we have \begin{align}\begin{split}
&2^{6}u^{3}\omega_4(u)\\={}&\det \begin{pmatrix}\IKM(1,5;1)+\circ&\nu^1_{2,2}(1)+\circ&\nu^1_{2,3}(1)+\circ&\nu^1_{2,4}(1)+\circ \\
\sharp\ &\acute \nu^1_{2,2}(1)+\circ &  \acute\nu^1_{2,3}(u) & \acute \nu^1_{2,4}(1)+\circ \\
\IKM(1,5;3)+\circ  & \nu^2_{2,2}(1)+\circ & \nu^2_{2,3}(1)&\nu^2_{2,4}(1)+\circ \\
\sharp\ & \sharp & \acute \nu^2_{2,3}(u) & \sharp \\
\end{pmatrix}\end{split}
\end{align}where a hash  (resp.~circle)  denotes a bounded (resp.~infinitesimal) quantity.
Here,
it is also worth pointing out that $ \nu^1_{2,2}(1)=\nu^1_{2,4}(1)=\IKM(2,4;1)$ and $ \nu^2_{2,2}(1)=\nu^2_{2,4}(1)=\IKM(2,4;3)$.

From a bound \begin{align}
\sup _{t>0}t^{2s}\left| [I_0(t)K_0(t)]^2-\frac{1}{4t^{2}} \right|<\infty,\quad s\in\mathbb \{1,2\}
\end{align}and generalized Weber--Schafheitlin integrals \cite[cf.][\S13.45]{Watson1944Bessel} for $ u\in(0,1)$:\begin{align}
\int_0^\infty I_0(\sqrt{u}t)K_0(t)t\D t={}&\frac{1}{1-u},\\\int_0^\infty I_1(\sqrt{u}t)K_0(t)\D t={}&-\frac{\log(1-u)}{2\sqrt{u}},\\\int_0^\infty I_1(\sqrt{u}t)K_0(t)t^{2}\D t={}&\frac{2\sqrt{u}}{(1-u)^{2}},
\end{align} we may deduce the following asymptotic formulae in the  $ u\to1^{-}$ regime:\begin{align}\begin{split}&\acute \nu^1_{2,3}(u)\\={}&\frac{1}{4}\int_0^\infty I_1(\sqrt{u}t)K_0(t)\D t+\int_0^\infty I_1(\sqrt{u}t)K_0(t)\left\{ [I_0(t)K_0(t)]^2-\frac{1}{4t^{2}}  \right\}t^2\D t\\={}&O(\log(1-u)),\end{split}\intertext{}\begin{split}&
(1-u)\nu^2_{2,3}(u)\\={}&\frac{(1-u)}{4}\int_0^\infty I_0(\sqrt{u}t)K_0(t)t\D t+(1-u)\int_0^\infty I_0(\sqrt{u}t)K_0(t)\left\{ [I_0(t)K_0(t)]^2-\frac{1}{4t^{2}}  \right\}t^3\D t\\={}&O(1),\end{split}\intertext{}\begin{split}&(1-u)^2\acute \nu^2_{2,3}(u)\\={}&\frac{(1-u)^{2}}{4}\int_0^\infty I_1(\sqrt{u}t)K_0(t)t^{2}\D t+(1-u)^{2}\int_0^\infty I_1(\sqrt{u}t)K_0(t)\left\{ [I_0(t)K_0(t)]^2-\frac{1}{4t^{2}}  \right\}t^4\D t\\={}&\frac{\sqrt{u}}{2}+O((1-u)^2\log(1-u)).\end{split}
\end{align}Therefore, we have \begin{align}\begin{split}
&2^{6}u^{2}(1-u)^{2}\omega_4(u)\\={}&\det \begin{pmatrix}\IKM(1,5;1)+\circ&\nu^1_{2,2}(1)+\circ&\circ&\nu^1_{2,4}(1)+\circ \\
\sharp\ &\acute \nu^1_{2,2}(1)+\circ & \circ & \acute \nu^1_{2,4}(1)+\circ \\
\IKM(1,5;3)+\circ  & \nu^2_{2,2}(1)+\circ & \circ&\nu^1_{2,4}(1)+\circ \\
\sharp\ & \sharp & \frac{1}{2}+\circ & \sharp \\
\end{pmatrix}\\={}&-\frac{1}{2}\det\begin{pmatrix}\IKM(1,5;1)+\circ&\IKM(2,4;1)+\circ&\IKM(2,4;1)+\circ \\
\sharp\ & \acute \nu^1_{2,2}(1)+\circ &  \acute \nu^1_{2,4}(1)+\circ \\
\IKM(1,5;3)+\circ  & \IKM(2,4;3)+\circ & \IKM(2,4;3)+\circ \\
\end{pmatrix}+o(1)\end{split}
\end{align}by cofactor expansion, as $ u\to1^-$. After eliminating the second column from the last column in the last $ 3\times3$ determinant, and employing $\acute \nu^1_{2,4}(1)-\acute \nu^1_{2,2}(1)=-\IKM(1,3;1)$, in a similar fashion as \eqref{eq:detM2_fac_prep}, we arrive at the factorization formula in \eqref{eq:W_detN2_fac}.
\end{proof}

Next, we consider an extension of Proposition \ref{prop:W_M2_0}.
\begin{proposition}[Factorization of  $\omega_4(0^+)$]\label{prop:W_N2_0}The limit\begin{align}
\lim_{u\to0^+}u^{4}\omega_4(u)=-\frac{5(\det \mathbf M_2)^{2}}{2^{7}3}
\end{align} entails \begin{align}
\omega_4(u)=-\frac{\pi^{6}}{2^{5}[u^2 (1-u)(9-u) ( 25-u)]^{2}},\quad \forall u\in(0,1).\label{eq:W_detN2_eval}
\end{align}In particular, this implies the evaluation   $ \det \mathbf N_2=\frac{\pi^{4}}{2^63^2}$.\end{proposition}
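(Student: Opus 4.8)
The plan is to run the two-step argument of Proposition~\ref{prop:W_M2_0}, now with the $4\times 4$ Wro\'nskian $\omega_4$ in place of the $3\times 3$ one $\varOmega_3$. The Vanhove-type differential equation just established for $\omega_4$ says that $[u^2(1-u)(9-u)(25-u)]^2\omega_4(u)$ is constant on $(0,1)$, so the whole proposition reduces to computing $\lim_{u\to0^+}u^4\omega_4(u)$: once that limit is known, \eqref{eq:W_detN2_eval} follows by matching constants, and evaluating the factorization \eqref{eq:W_detN2_fac} of Proposition~\ref{prop:W_N2_1} as $u\to1^-$ against the resulting closed form yields $\det\mathbf N_2=\pi^4/(2^6\cdot3^2)$, using $\IKM(1,3;1)=\pi^2/16$ and $\det\mathbf M_2=2\pi^3/\sqrt{3^35^5}$.

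To get that limit I would start from the identity $2^6u^3\omega_4(u)=\det(\cdot)$ recorded inside the proof of Proposition~\ref{prop:W_N2_1}, whose four rows carry $(\nu^1_{2,j})_j$, $(\acute\nu^1_{2,j})_j$, $(\nu^2_{2,j})_j$, $(\acute\nu^2_{2,j})_j$, and rescale its two ``$\acute\nu$'' rows by $\sqrt u$ apiece, so that $2^6u^4\omega_4(u)$ equals the determinant of the rescaled matrix. I would then record the $u\to0^+$ asymptotics of its sixteen entries from $I_0(\sqrt ut)\to1$, $zK_1(z)\to1$ as $z\to0$, $I_1(z)=O(z)$ and $K_0(z)=O(1+|\log z|)$, all tamed by the uniform bounds \eqref{eq:IK_sup}, \eqref{eq:Klog_sup}, \eqref{eq:K1_sup}: every $\IvKM$-entry tends to the corresponding single-scale moment $\IKM$, every $\sqrt u\,\IKpM$-entry tends to minus such a moment, every $\sqrt u\,\IpKM$-entry is $O(u)$, and every $\IKvM$-entry is $O(\log u)$. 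Thus, in the limit, columns $2$ and $3$ are finite in the ``$\nu$'' rows (with values $\IKM(1,4;2\ell-1)$ and $\IKM(2,3;2\ell-1)$) and $O(u)$ in the ``$\acute\nu$'' rows, while columns $1$ and $4$ are $O(\log u)$ in the ``$\nu$'' rows and tend to $-\tfrac{5}{6}\IKM(1,4;2\ell-1)$ and $-\IKM(2,3;2\ell-1)$ in the ``$\acute\nu$'' rows.

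The last step is to Laplace-expand this $4\times4$ determinant along its two ``$\acute\nu$'' rows. For every choice of column pair other than $\{1,4\}$ a minor of order $O(u)$ or $O(u^2)$ multiplies one of order $O(\log u)$ or $O((\log u)^2)$, so that term vanishes in the limit — the same phenomenon that made the $O(1/\sqrt u)$ entries harmless in Proposition~\ref{prop:W_M2_0}. The surviving $\{1,4\}$ term equals $(-1)$ times the product of the $2\times2$ ``$\acute\nu$''-minor on columns $\{1,4\}$ and the $2\times2$ ``$\nu$''-minor on columns $\{2,3\}$, and each of these is $\IKM(1,4;1)\IKM(2,3;3)-\IKM(1,4;3)\IKM(2,3;1)=\det\mathbf M_2$, up to the scalar $\tfrac{5}{6}$ carried by the first minor. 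Hence $\lim_{u\to0^+}2^6u^4\omega_4(u)=-\tfrac{5}{6}(\det\mathbf M_2)^2$, i.e.\ $\lim_{u\to0^+}u^4\omega_4(u)=-5(\det\mathbf M_2)^2/(2^7\cdot3)$, which together with the first paragraph completes the proof.

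The substantive difficulty is entirely in the middle step: one must nail down not just the orders but the precise leading constants of the relevant two-scale Bessel moments as $u\to0^+$, and verify that each logarithmic divergence is paired in the expansion with an $O(u)$ cofactor, so that products like $(\log u)\,O(u)$ and $(\log u)^2\,O(u^2)$ truly tend to $0$. This is where the estimates \eqref{eq:IK_sup}, \eqref{eq:Klog_sup}, \eqref{eq:K1_sup} and the decomposition of $[I_0(t)K_0(t)]^2$ into $\tfrac{1}{4t^2}$ plus an $O(t^{-3})$ remainder are indispensable, exactly as in the proof of Proposition~\ref{prop:W_M2_0}; no analytic ingredient beyond those is required.
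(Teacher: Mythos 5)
Your proposal is correct and follows essentially the same route as the paper: rescaling the two $\acute\nu$ rows by $\sqrt{u}$, extracting the $u\to0^+$ asymptotics of the sixteen entries via the bounds \eqref{eq:IK_sup}--\eqref{eq:Klog_sup} and \eqref{eq:K1_sup}, and isolating the single surviving Laplace term $-\tfrac{5}{6}(\det\mathbf M_2)^2$ from columns $\{1,4\}$ against $\{2,3\}$, with all other terms killed by $O(u\log u)$ or $O(u^2\log^2u)$ factors. The constants, signs, and the final matching against Proposition~\ref{prop:W_N2_1} all agree with the paper's computation.
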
\begin{proof}We will evaluate $ \lim_{u\to0^+}u^4\omega_4(u)$, starting from the expansion{\allowdisplaybreaks\begin{align}\begin{split}&
2^{6}u^{4}\omega_4(u)\\={}&\det\left(\begin{array}{rrrr}\nu^1_{2,1}(u) & \nu^1_{2,2}(u) & \nu^1_{2,3}(u) &\nu^1_{2,4}(u)\\
\sqrt{u}\acute\nu^1_{2,1}(u) & \sqrt{u}\acute\nu^1_{2,2}(u) &\sqrt{u} \acute\nu^1_{2,3}(u)&\sqrt{u}\acute\nu^1_{2,4}(u) \\
\nu^2_{2,1}(u) & \nu^2_{2,2}(u) & \nu^2_{2,3}(u)&\nu^2_{2,4}(u) \\\sqrt{u}\acute\nu^2_{2,1}(u) & \sqrt{u}\acute\nu^2_{2,2}(u) &\sqrt{u} \acute\nu^2_{2,3}(u)&\sqrt{u}\acute\nu^2_{2,4}(u) \\
\end{array}\right)\\={}&\det\begin{pmatrix}O(\log u) & \mu^1_{2,1}(1)+O(u) & \mu^1_{2,2}(1)+O(u) &O(\log u)\\
\sqrt{u}\acute\nu^1_{2,1}(u) & O(u) &O(u)&\sqrt{u}\acute\nu^1_{2,4}(u) \\
O(\log u) & \mu^2_{2,1}(1)+O(u) & \mu^2_{2,2}(1)+O(u)&O(\log u) \\\sqrt{u}\acute\nu^2_{2,1}(u) & O(u) &O(u)&\sqrt{u}\acute\nu^2_{2,4}(u) \\
\end{pmatrix}\\={}&-\det \begin{pmatrix}\IKM(1,4;1) & \IKM(2,3;1) \\
\IKM(1,4;3) & \IKM(2,3;3) \\
\end{pmatrix}\det\begin{pmatrix}\sqrt{u}\acute\nu^1_{2,1}(u) & \sqrt{u}\acute\nu^1_{2,4}(u) \\
\sqrt{u}\acute\nu^2_{2,1}(u) & \sqrt{u}\acute\nu^2_{2,4}(u) \\
\end{pmatrix}+O(u^{2}\log^2u),\end{split}
\end{align}}where $ \mu^\ell_{2,1}(1)=\IKM(1,4;2\ell-1)$ and $\mu^\ell_{2,2}(1)=\IKM(2,3;2\ell-1) $. Arguing in a similar vein as  \eqref{eq:K-1}, we find\begin{align}\begin{split}&
\begin{pmatrix}\sqrt{u}\acute\nu^1_{2,1}(u) & \sqrt{u}\acute\nu^1_{2,4}(u) \\
\sqrt{u}\acute\nu^2_{2,1}(u) & \sqrt{u}\acute\nu^2_{2,4}(u) \\
\end{pmatrix}\\={}&\begin{pmatrix}-\frac{5}{6}\IKM(1,4;1)+o(1) & -\IKM(2,3;1)+o(1) \\
-\frac{5}{6}\IKM(1,4;3)+o(1) & -\IKM(2,3;3)+o(1) \\
\end{pmatrix}\end{split}
\end{align}as $ u\to0^+$. Therefore, our goal is achieved.
 \end{proof}
\section{Broadhurst--Mellit formulae for $\det\mathbf M_k$ and $ \det\mathbf N_k$\label{sec:detMkNk}}The major goal of this section is to generalize the algebraic manipulations in \S\S\ref{sec:detM2}--\ref{sec:detN2} to the following recursions of  Broadhurst--Mellit determinants for all $ k\in\mathbb Z_{\geq2}$:
\begin{align} \det\mathbf M_{k-1}\det\mathbf M_k={}&\frac{k [\Gamma (k/2)]^2(\det\mathbf N_{k-1})^2}{2 (2 k+1)}\prod _{j=1}^k \left[\frac{(2 j)^2}{(2 j)^2-1}\right]^{k-\frac{1}{2}},\label{eq:detM_rec}\\\det\mathbf N_{k-1}\det\mathbf N_k={}&\frac{2k+1}{k+1}\frac{(\det\mathbf M_{k})^2}{(k-1)!}\prod _{j=2}^{k+1} \left[\frac{(2 j-1)^2}{(2 j-1)^2-1}\right]^{k}.\label{eq:detN_rec}\end{align}  Once these recursions are established, we can verify Conjectures \ref{conj:BMdetM} and \ref{conj:BMdetN} by induction. \subsection{Wro\'nskians for two-scale Bessel moments}The analysis in \S\S\ref{sec:detM2}--\ref{sec:detN2} motivates us to introduce the following notations for matrix elements.
\begin{definition}For each $ k\in\mathbb Z_{\geq2}$, we set \begin{align}
\left\{\begin{array}{l}\mu^\ell_{k,1}(u)=\frac{\IvKM(1,2k;2\ell-1|u)+2k\IKvM(1,2k;2\ell-1|u)}{2k+1},\\\mu^\ell_{k,j}(u)=\IvKM(j,2k+1-j;2\ell-1|u),\forall j\in\mathbb Z\cap[2,k],\\\mu^\ell_{k,j}(u)=\IKvM(j-k+1,3k-j;2\ell-1|u),\forall j\in\mathbb Z\cap[k+1,2k-1],\end{array}\right.
\end{align}and \begin{align}
\left\{\begin{array}{l}\nu^\ell_{k,1}(u)=\frac{\IvKM(1,2k+1;2\ell-1|u)+(2k+1)\IKvM(1,2k+1;2\ell-1|u)}{2(k+1)},\\\nu^\ell_{k,j}(u)=\IvKM(j,2k+2-j;2\ell-1|u),\forall j\in\mathbb Z\cap[2,k+1],\\\nu^\ell_{k,j}(u)=\IKvM(j-k,3k+2-j;2\ell-1|u),\forall j\in\mathbb Z\cap[k+2,2k].\end{array}\right.
\end{align}For $a,b\in\mathbb Z\cap[1,k]$, we also write $ \mu_{k,a}^b=\mu_{k,a}^b(1)$ and $ \nu_{k,a}^{b}=\nu_{k,a}^b(1)$, as the abbreviations for the entries in the Broadhurst--Mellit matrices:\begin{align}
\mu_{k,a}^b={}&(\mathbf M_k)_{a,b}:=\int_0^\infty[I_0(t)]^a[K_0(t)]^{2k+1-a}t^{2b-1}\D t,\\\nu_{k,a}^b={}&(\mathbf N_k)_{a,b}:=\int_0^\infty[I_0(t)]^a[K_0(t)]^{2k+2-a}t^{2b-1}\D t.
\end{align}  For each $ k\in\mathbb Z_{\geq2}$, we will be concerned with  \begin{align} \varOmega_{2k-1}(u):={}&W[\mu^1_{k,1}(u),\dots,\mu^1_{k,2k-1}(u)],\\\omega_{2k}(u):={}&W[\nu^1_{k,1}(u),\dots,\nu^1_{k,2k}(u)],\end{align}  the Wro\'nskian determinants for two-scale Bessel moments. \eor\end{definition}

 If we further define\begin{align}
\left\{\begin{array}{l}\acute\mu^\ell_{k,1}(u)=\frac{\IpKM(1,2k;2\ell-1|u)+2k\IKpM(1,2k;2\ell-1|u)}{2k+1},\\\acute\mu^\ell_{k,j}(u)=\IpKM(j,2k+1-j;2\ell-1|u),\forall j\in\mathbb Z\cap[2,k],\\\acute\mu^\ell_{k,j}(u)=\IKpM(j-k+1,3k-j;2\ell-1|u),\forall j\in\mathbb Z\cap[k+1,2k-1],\end{array}\right.
\end{align}and  \begin{align}
\left\{\begin{array}{l}\acute\nu^\ell_{k,1}(u)=\frac{\IpKM(1,2k+1;2\ell-1|u)+(2k+1)\IKpM(1,2k+1;2\ell-1|u)}{2(k+1)},\\\acute\nu^\ell_{k,j}(u)=\IpKM(j,2k+2-j;2\ell-1|u),\forall j\in\mathbb Z\cap[2,k+1],\\\acute\nu^\ell_{k,j}(u)=\IKpM(j-k,3k+2-j;2\ell-1|u),\forall j\in\mathbb Z\cap[k+2,2k],\end{array}\right.
\end{align}then we can verify\begin{align}
(2\sqrt{u})^{(k-1)(2k-1)}\varOmega_{2k-1}(u)={}&\det \begin{pmatrix}\mu_{k,1}^1(u) & \cdots\ & \mu_{k,2k-1}^1(u) \\
\acute\mu_{k,1}^1(u) & \cdots\ & \acute \mu_{k,2k-1}^1(u) \\
\multicolumn{3}{c}{\cdots\cdots\cdots\cdots\cdots\cdots\cdots\cdots}\ \\
\mu_{k,1}^k(u) & \cdots\ & \mu_{k,2k-1}^k(u)  \\
\end{pmatrix}\label{eq:varOmega_2k_alg}\intertext{for $u\in(0,4) $, and}
(2\sqrt{u})^{(2k-1)k}\omega_{2k}(u)={}&\det\begin{pmatrix}\nu_{k,1}^1(u) & \cdots\ & \nu_{k,2k-1}^1(u) \\
\acute\nu_{k,1}^1(u) & \cdots\ & \acute \nu_{k,2k-1}^1(u) \\
\multicolumn{3}{c}{\cdots\cdots\cdots\cdots\cdots\cdots\cdots\cdots} \\
\nu_{k,1}^k(u) & \cdots\ & \nu_{k,2k-1}^k(u)  \\
\acute
\nu_{k,1}^k(u) & \cdots\ &\acute \nu_{k,2k-1}^k(u)\\
\end{pmatrix}\label{eq:omega_2k_alg}\end{align}
for $u\in(0,1) $, through iterated applications of the Bessel differential equations $ (uD^2+D^1)I_{0}(\sqrt{u}t)=\frac{t^2}{4}I_0(\sqrt{u}t)$ and $(uD^2+D^1)K_{0}(\sqrt{u}t)=\frac{t^2}{4}K_0(\sqrt{u}t) $.\begin{lemma}[Vanhove differential equations for  $ \varOmega_{2k-1}(u)$ and $ \omega_{2k}(u)$]\label{lm:VanhoveLn}\begin{enumerate}[leftmargin=*,  label=\emph{(\alph*)},ref=(\alph*),
widest=a, align=left]\item For each $ n\in\mathbb Z_{\geq1}$, there exists a holonomic differential operator $ \widetilde L_n$ whose leading term is $ f_n(u)D^n$, such that $ f_n(u)$ is a monic polynomial and \begin{align}
\begin{cases}\widetilde L_n\IvKM(1,n+1,1|u)=-\frac{(n+1)!}{2^{n}}, &  \\
\widetilde L_n\IKvM(1,n+1,1|u)=\frac{n!}{2^{n}}, &  \\\widetilde
L_n\IvKM(j,n+2-j,1|u)=0, & \forall j\in\mathbb Z\cap[2,\frac{n}{2}+1], \\ \widetilde L_n\IKvM(j,n+2-j,1|u)=0,&\forall j\in\mathbb Z\cap[2,\frac{n+1}{2}].
\end{cases}\label{eq:VanhoveLnODE}
\end{align}  \item For $u\in(0,4)$, we have\begin{align}
D^{1}\varOmega_{2k-1}(u)={}&\frac{2k-1}{2}\varOmega_{2k-1}(u)D^1\log\frac{1}{u^k \prod _{j=1}^k[(2j)^2-u]};\label{eq:W_ODE_Mk_prep0}\intertext{for $u\in(0,1) $, we have}D^{1}\omega_{2k}(u)={}&k\omega_{2k}(u)D^1\log\frac{1}{u^k \prod _{j=1}^{k+1}[(2j-1)^2-u]}.\label{eq:W_ODE_Nk_prep0}
\end{align} \end{enumerate}\end{lemma}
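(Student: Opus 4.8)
The plan is to make part~(a) --- the construction of $\widetilde L_n$ and the determination of its action on the one-scale-shifted Bessel moments --- the substantive step, and then to deduce part~(b) from it by an Abel-type computation identical in spirit to the proof of Lemma~\ref{lm:W_ODE_M2}. Part~(a) rests on two ingredients. First, for each fixed $t>0$ both $I_0(\sqrt u\,t)$ and $K_0(\sqrt u\,t)$ satisfy the same second-order equation in $u$, namely $(uD^2+D^1)y=\tfrac{t^2}{4}y$; since the inner factor $[I_0(t)]^p[K_0(t)]^q$ (with $p+q=n+1$) and the weight $t$ do not involve $u$, differentiating under the integral sign shows that applying $4(uD^2+D^1)$ to $\IvKM(p{+}1,q;1|u)$ or $\IKvM(p{+}1,q;1|u)$ is the same as inserting a factor $t^2$ into the integrand --- valid only as long as the resulting integral still converges, which is precisely what fixes the admissible ranges $u\in(0,(n+1)^2)$ for $\IvKM$ and $u\in(0,\infty)$ for $\IKvM$ via the exponential rates in \eqref{eq:IK_asympt}. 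Second, the family $\{[I_0(t)]^a[K_0(t)]^{n+1-a}:0\le a\le n+1\}$ spans the solution space of an order-$(n+2)$ linear ODE in $t$ whose coefficients are polynomials in $\vartheta_t:=t\,\D/\D t$ and $t^2$, namely the $(n{+}1)$-st symmetric power of $\vartheta_t^2-t^2$.

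Combining the two, I would write the symmetric-power relation for the inner factor, integrate it against the outer factor $I_0(\sqrt u\,t)$ or $K_0(\sqrt u\,t)$ while transferring all of the $\vartheta_t$'s onto that factor by integration by parts in $t$ (recording the Newton--Leibniz boundary terms), and then trade each residual multiplication by $t^2$ for the operator $4(uD^2+D^1)$. The output is a single operator $\widetilde L_n$ in $u$ of order $n$, reproducing for $n=3,4$ the operators $\widetilde L_3,\widetilde L_4$ of \eqref{eq:VL3}--\eqref{eq:VL4} tabulated by Vanhove \cite{Vanhove2014Survey}, whose leading coefficient $f_n(u)$ is monic with $f_{2k-1}(u)=u^k\prod_{j=1}^k[u-(2j)^2]$ and $f_{2k}(u)=u^k\prod_{j=1}^{k+1}[u-(2j-1)^2]$; the factor $u^k$ records the Fuchsian exponents at $u=0$, while the remaining roots are the values of $u$ at which the exponential rate of the $\sqrt u$-factor matches an inner rate.

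For the right-hand sides of \eqref{eq:VanhoveLnODE} the dichotomy is boundary-term bookkeeping. When $j\ge2$, i.e.\ when at least one inner $I_0(t)$ is present, every surviving boundary term vanishes --- at $t\to0^+$ because $I_0(t)=1+O(t^2)$ and $K_0(t)=O(\log t)$ are far too tame against the accompanying powers of $t$, and at $t\to\infty$ because throughout the stated $u$-range the integrand decays exponentially --- so $\widetilde L_n$ annihilates $\IvKM(j,n{+}2{-}j;1|u)$ and $\IKvM(j,n{+}2{-}j;1|u)$. When $j=1$ the inner factor is the extremal product $[K_0(t)]^{n+1}$, a single boundary term at $t\to0^+$ survives, and evaluating it with $I_0(t)=1+O(t^2)$, $K_0(t)=-\log(t/2)-\gamma+O(t^2\log t)$ yields $-\tfrac{(n+1)!}{2^n}$ in the $I_0(\sqrt u\,t)$ case and $\tfrac{n!}{2^n}$ in the $K_0(\sqrt u\,t)$ case. (The bounds \eqref{eq:IK_sup}, \eqref{eq:Klog_sup} and \eqref{eq:K1_sup} are what make the differentiation under the integral and these boundary estimates rigorous.) This explains the weights in $\mu^1_{k,1}(u)$ and $\nu^1_{k,1}(u)$: since $\tfrac{1}{2k+1}\bigl(-\tfrac{(2k)!}{2^{2k-1}}+2k\cdot\tfrac{(2k-1)!}{2^{2k-1}}\bigr)=0$ and $\tfrac{1}{2(k+1)}\bigl(-\tfrac{(2k+1)!}{2^{2k}}+(2k+1)\tfrac{(2k)!}{2^{2k}}\bigr)=0$, we obtain $\widetilde L_{2k-1}\mu^1_{k,1}(u)=0$ on $(0,4)$ and $\widetilde L_{2k}\nu^1_{k,1}(u)=0$ on $(0,1)$.

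Granting part~(a), part~(b) is short. The functions $\mu^1_{k,1}(u),\dots,\mu^1_{k,2k-1}(u)$ are precisely $\mu^1_{k,1}(u)$ together with the $\IvKM(j,\cdot;1|u)$ for $j\in[2,k]$ and the $\IKvM(j,\cdot;1|u)$ for $j\in[2,k]$, hence all lie in the kernel of $\widetilde L_{2k-1}$ on $(0,4)$; similarly $\nu^1_{k,1}(u),\dots,\nu^1_{k,2k}(u)$ lie in the kernel of $\widetilde L_{2k}$ on $(0,1)$. Differentiating the Wro\'nskian row by row and using $\widetilde L_n y=0$ to rewrite the top derivative of each column in terms of the lower ones --- exactly the step in \eqref{eq:W_ODE_M2_prep} --- yields $D^1\varOmega_{2k-1}=-\tfrac{g_{2k-1}(u)}{f_{2k-1}(u)}\,\varOmega_{2k-1}$, where $g_n(u)D^{n-1}$ is the subleading term of $\widetilde L_n$; the identity $g_n(u)=\tfrac n2\,f_n'(u)$ --- a structural feature of Vanhove's operator, read off once $f_n$ and $g_n$ are written out and traceable to the symmetric-power origin of $\widetilde L_n$ --- then converts this into \eqref{eq:W_ODE_Mk_prep0}, and the analogous argument with $\tfrac n2=k$ and the odd squares gives \eqref{eq:W_ODE_Nk_prep0}. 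The main obstacle is the boundary-term analysis in part~(a): arranging the integration by parts so that the bulk collapses to a total $t$-derivative, isolating the lone term that survives when $j=1$, and confirming that its value is genuinely independent of $u$ on the asserted intervals.
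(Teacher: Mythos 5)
Your proposal is correct and follows essentially the same route as the paper: the symmetric power $L_{n+2}$ of $\vartheta_t^2-t^2$ annihilating the inner Bessel product, integration by parts in $t$ to transfer derivatives onto the $\sqrt u$-scaled factor, conversion of residual $t^2$-multiplications into $4(uD^2+D^1)$ to produce Vanhove's $\widetilde L_n$, Newton--Leibniz boundary terms supplying the inhomogeneous constants precisely when $j=1$, and Abel's formula applied to the ratio of the two leading coefficients $f_n(u)$ and $\tfrac n2 f_n'(u)$. The only cosmetic imprecision is your stated reason for the vanishing of the $j\ge2$ boundary terms (the operative fact is that the sole term surviving the factorial constraints in the Leibniz expansion of $\mathscr L_{n+2,k}$ carries a factor $[\eth^1 I_0(t)]^{j-1}=O(t^{2(j-1)})$, not merely that $I_0$ and $K_0$ are tame), which the paper handles via its explicit formula for $\mathscr L_{n+2,k}\{[I_0]^a[K_0]^b\}$.
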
\begin{proof}\begin{enumerate}[leftmargin=*,  label=(\alph*),ref=(\alph*),
widest=a, align=left]\item With the notations $ \eth^0f(t)=f(t)$ and $ \eth^{n+1}f(t)=t\frac{\D}{\D t}\eth^nf(t)$ for all $ n\in\mathbb Z_{\geq0}$, we have the Bessel differential equations $ \eth^2I_0(t)=t^2\eth^0I_0(t)$  and   $ \eth^2K_0(t)=t^2\eth^0K_0(t)$. The Borwein--Salvy operator $ L_{n+1}$ \cite[][Lemma 3.3]{BorweinSalvy2007}, being the $n$-th symmetric power of the Bessel differential operator $ \eth^2-t^2\eth^0$, annihilates each member in the set $ \{[I_0(t)]^j[K_0(t)]^{n-j}|j\in\mathbb Z\cap[0,n]\}$. The Borwein--Salvy operator $L_{n+1}=\mathscr L_{n+1,n+1}$ can be constructed by the Bronstein--Mulders--Weil algorithm \cite[][Theorem 1]{BMW1997}:\begin{align}
\begin{cases}\mathscr L_{n+1,0}=\eth^0,\mathscr L_{n+1,1}=\eth^1, &  \\
\mathscr L_{n+1,k+1}=\eth^1\mathscr L_{n+1,k}-k(n+1-k)t^{2}\mathscr L_{n+1,k-1}, &          \forall k\in\mathbb Z\cap[1,n]. \\
\end{cases}\label{eq:BMW}
\end{align}For each fixed $j\in\mathbb Z\cap[0,n]$, one can use the aforementioned recursion for the operators $ \mathscr L_{n+1,k}$, the Leibniz rule for derivatives, and the Bessel differential equation, to prove a  formula \cite[cf.][Theorem 1]{BMW1997}
\begin{align}\begin{split}&
\mathscr L_{n+1,k}\{[I_0(t)]^j[K_0(t)]^{n-j}\}\\={}&\sum_{\ell=0}^{k}\frac{k!}{\ell!(k-\ell)!}\frac{j!}{(j-\ell)!}\frac{(n-j)!}{(n-j-k+\ell)!}[\eth^{1} I_0(t)]^\ell[I_0(t)]^{j-\ell}[\eth ^{1}K_0(t)]^{k-\ell}[K_0(t)]^{n-j-k+\ell}\label{eq:L_descent}
\end{split}\end{align}by induction on $ k\in\mathbb Z\cap[0,n]$.
(Here, we need the convention  $ 1/(-m)!=0$ for all positive integers $ m$.)
In particular, we have the following identities for $ k\in\mathbb Z\cap[0,n]$ \cite[cf.][Lemma 3.1]{BorweinSalvy2007} \begin{align}
 \mathscr L_{n+1,k}\{[K_0(t)]^n\}={}&\frac{n!}{(n-k)!}[K_0(t)]^{n-k}[\eth^1 K_0(t)]^{k},\label{eq:Kn_descent}\\\begin{split}\mathscr L_{n+1,k}\{I_{0}(t)[K_0(t)]^{n-1}\}={}&\frac{ (n-1)!k}{(n-k)!}[\eth^1 I_0(t)][K_0(t)]^{n-k}[\eth^1 K_0(t)]^{k-1}\\{}&+\frac{ (n-1)!}{(n-k-1)!}I_0(t)[K_0(t)]^{n-k-1}[\eth^1 K_0(t)]^{k}.\label{eq:IKn_1_descent}
\end{split}\end{align}

Once we have obtained \begin{align}
L_{n+1}=\sum_{k=0}^{n+1}\lambda_{n+1,k}(t)\frac{\partial^k}{\partial t^k}
\end{align}from the  Bronstein--Mulders--Weil algorithm described  above [with the understanding that $ \frac{\partial^0}{\partial t^0}g(t,u)=g(t,u)$], we can define the action of its formal adjoint $ L^*_{n+1}$ on a bivariate function $g(t,u)$ as follows:\begin{align}
L^*_{n+1}g(t,u)=\sum_{k=0}^{n+1}(-1)^{k}\frac{\partial^k}{\partial t^k}[\lambda_{n+1,k}(t)g(t,u)].
\end{align}
The design of  Vanhove's operators $ \widetilde L_n,n\in\mathbb Z_{\geq1}$ in \cite[][\S9]{Vanhove2014Survey} ensures that \begin{align}
\left\{ \begin{array}{c}
t\widetilde L_nI_0(\sqrt{u}t)=\frac{(-1)^n}{2^n}L^*_{n+2}\frac{I_0(\sqrt{u}t)}{t}, \\
t\widetilde L_nK_0(\sqrt{u}t)=\frac{(-1)^n}{2^n}L^*_{n+2}\frac{K_0(\sqrt{u}t)}{t}. \\
\end{array} \right.
\end{align}

Starting from the vanishing identity \begin{align}
0=\int_0^\infty\frac{I_0(\sqrt{u}t)}{t}L_{n+1}\{[K_0(t)]^n\}\D t,\label{eq:sunrise_L}
\end{align}we may perform successive integrations by parts, while carefully treating boundary contributions from the $t\to0^+$ regime. We recall the recursion $ L_{n+1}=\mathscr L_{n+1,n+1}=\eth^1\mathscr L_{n+1,n}-nt^{2}\mathscr L_{n+1,n-1}$ from  \eqref{eq:BMW} and the closed-form formula for $ \mathscr L_{n+1,k}\{[K_0(t)]^n\}$ from \eqref{eq:Kn_descent}. These identities enable us to rewrite  \eqref{eq:sunrise_L} as \begin{align}\begin{split}
0={}&\int_0^\infty I_0(\sqrt{u}t)\frac{\partial}{\partial t}\mathscr L_{n+1,n}\{[K_0(t)]^n\}\D t-n\int_0^\infty tI_0(\sqrt{u}t)\mathscr L_{n+1,n-1}\{[K_0(t)]^n\}\D t\\={}&-(-1)^{n}n!-\int_0^\infty\mathscr  L_{n+1,n}\{[K_0(t)]^n\}\frac{\partial I_0(\sqrt{u}t)}{\partial t}\D t\\{}&-n\int_0^\infty tI_0(\sqrt{u}t)\mathscr L_{n+1,n-1}\{[K_0(t)]^n\}\D  t,\end{split}\label{eq:int_part_sunrise}
\end{align}where the boundary contribution comes from $ \lim_{t\to0^+}\mathscr L_{n+1,n}\{[K_0(t)]^n\}=n!\lim_{t\to0^+}[-tK_1(t)]^n=(-1)^n $ $n!$. None of the  subsequent  integrations by parts will   incur any non-vanishing boundary contributions, because we have $ \lim_{t\to0^+}t^\ell\log^mt=0$ for all $ \ell,m\in\mathbb Z_{>0}$. Thus, we can recast \eqref{eq:int_part_sunrise} into \begin{align}\begin{split}
0={}&-(-1)^{n}n!+\int_0^\infty [K_0(t)]^nL_{n+1}^*\frac{I_0(\sqrt{u}t)}{t}\D t\\={}&-(-1)^{n}n!+(-1)^{n-1}2^{n-1}\widetilde L_{n-1}\IvKM(1,n,1|u),\end{split}
\end{align}which proves the first identity in \eqref{eq:VanhoveLnODE}.

In a similar vein, we may integrate by parts with the help from   \eqref{eq:BMW} and \eqref{eq:IKn_1_descent}:\begin{align}\begin{split}
0={}&\int_0^\infty\frac{K_0(\sqrt{u}t)}{t}L_{n+1}\{I_0(t)[K_0(t)]^{n-1}\}\D t\\={}&-\int_0^\infty\mathscr  L_{n+1,n}\{I_{0}(t)[K_0(t)]^{n-1}\}\frac{\partial K_0(\sqrt{u}t)}{\partial t}\D t-n\int_0^\infty tK_0(\sqrt{u}t)\mathscr L_{n+1,n-1}\{I_{0}(t)[K_0(t)]^{n-1}\}\D  t\\={}&\lim_{t\to0^+}\left(t\frac{\partial K_0(\sqrt{u}t)}{\partial t}\mathscr  L_{n+1,n-1}\{I_{0}(t)[K_0(t)]^{n-1}\}\right)+(-1)^{n-1}2^{n-1}\widetilde L_{n-1}\IKvM(1,n,1|u)\\={}&(-1)^n(n-1)!+(-1)^{n-1}2^{n-1}\widetilde L_{n-1}\IKvM(1,n,1|u),\end{split}
\end{align} which proves the second identity in  \eqref{eq:VanhoveLnODE}.

All the remaining cases in  \eqref{eq:VanhoveLnODE} can be proved by examining the asymptotic behavior of \eqref{eq:L_descent} in the $t\to0^+$ regime.

\item

From \eqref{eq:VanhoveLnODE}, we know that  for each $ k\in\mathbb Z_{\geq2}$, Vanhove's operator   $ \widetilde L_{2k-1}$ (resp.~$ \widetilde L_{2k}$) annihilates every member in the set $ \{\mu^1_{k,j}(u)|j\in\mathbb Z\cap[1,2k-1]\}$ (resp.~$\{\nu^1_{k,j}(u)|j\in\mathbb Z\cap[1,2k]\} $).

 For $ k\in\mathbb Z_{\geq2}$, Vanhove's operators $ \widetilde L_{2k-1}$ and $\widetilde L_{2k} $ take the following forms  \cite[(9.11)--(9.12)]{Vanhove2014Survey}:\begin{align}
\widetilde L_{2k-1}={}&\mathfrak{m}_{2k-1}(u)D^{2k-1}+\frac{2k-1}{2}\frac{\D\mathfrak{m}_{2k-1}(u)}{\D u}D^{2k-2}+L.O.T.,\\\widetilde L_{2k}={}&\mathfrak n_{2k}(u)D^{2k}+k\frac{\D\mathfrak{n}_{2k}(u)}{\D u}D^{2k-1}+L.O.T.,
\end{align}where\begin{align}
\mathfrak{m}_{2k-1}(u)=u^k \prod _{j=1}^k[u-(2j)^2],\quad \mathfrak n_{2k}(u)=u^k \prod _{j=1}^{k+1}[u-(2j-1)^2],\label{eq:m_n_poly}
\end{align}and ``$ L.O.T.$'' stands for ``lower order terms''.
Therefore, the corresponding Wro\'nskians must evolve according to \eqref{eq:W_ODE_Mk_prep0} and \eqref{eq:W_ODE_Nk_prep0}.
\qedhere\end{enumerate} \end{proof}\begin{remark}Prior to the work of Vanhove \cite{Vanhove2014Survey}, various authors  \cite{LR2004,MSWZ2012,ABW2014} have considered the operator $ \widetilde L_2$. Although Vanhove formulated his theory in \cite[][\S9]{Vanhove2014Survey} only for ``sunrise diagrams'' $\IvKM(1,n;1|u) $, his ideas generalize well to Feynman graphs with other topologies, as indicated in the proof above. For an extension of Vanhove's differential equations to quantum field theory in arbitrary dimensions, see M\"uller-Stach--Weinzierl--Zayadeh \cite{MSWZ2014}.  \eor\end{remark}\begin{remark}For $n\in\mathbb Z_{>1} $, Kluyver's  function $ p_n(x)=\int_0^\infty J_0(xt)[J_0(t)]^n xt\D t$ represents the probability density  for the distance traveled by a random walker in the Euclidean plane after  $n$ consecutive unit steps aiming at random directions. Here,   $ J_0(t):=\frac{2}{\pi}\int_0^{\pi/2}\cos(t\cos\varphi)\D\varphi$ is the Bessel function of the first kind.
It has been shown by Borwein--Straub--Wan--Zudilin  that $p_n(x)$ is holonomic, whose annihilator has the  form $ g_{n}(x)\frac{\D^{n-1}}{\D x^{n-1}}+L.O.T.$ where \cite[][(2.8)]{BSWZ2012}\begin{align}
g_{n}(x)=x^{n-1}\prod_{\substack {m\in\mathbb Z\cap[1,n]\\m\equiv n\hspace{-0.5em}\pmod2}}(x^2-m^2).\label{eq:g_poly}
\end{align}   The resemblance between \eqref{eq:m_n_poly} and \eqref{eq:g_poly} is not accidental. We refer our readers to \cite{Zhou2017PlanarWalks} for the connection between Kluyver's  probability density function and two-scale Bessel moments.\eor\end{remark}\subsection{Reduction of $ \det \mathbf M_k$ to $ \det\mathbf M_{k-1}$ and $ \det\mathbf N_{k-1}$}Now we factorize
  $\varOmega_{2k-1} $ in a similar spirit as Propositions \ref{prop:W_M2_1} and \ref{prop:W_M2_0}.
\begin{proposition}[Factorization of $ \varOmega_{2k-1}(1)$]For each $ k\in\mathbb Z_{\geq2}$, we have\begin{align}
\varOmega_{2k-1}(1)=(-1)^{\frac{(k-1)(k-2)}{2}}\frac{\det\mathbf M_{k-1}}{2^{(k-1)(2k-1)}}\det\mathbf M_k.\label{eq:detMk_fac}
\end{align} \end{proposition}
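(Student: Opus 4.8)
The plan is to mimic the argument of Proposition \ref{prop:W_M2_1}, where the case $k=2$ was handled, and to turn the two ``edge'' structures of the matrix in \eqref{eq:varOmega_2k_alg} into the two smaller Broadhurst--Mellit determinants. First I would start from the algebraic identity \eqref{eq:varOmega_2k_alg}, evaluated at $u=1$: this rewrites $(2)^{(k-1)(2k-1)}\varOmega_{2k-1}(1)$ as a $(2k-1)\times(2k-1)$ determinant whose rows come in pairs $\mu^\ell_{k,j}(1),\acute\mu^\ell_{k,j}(1)$ for $\ell=1,\dots,k$, except that the last block contributes only one row $\mu^k_{k,j}(1)$. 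The key observation is that at $u=1$ the undecorated quantities collapse to single-scale moments, $\mu^\ell_{k,j}(1)=(\mathbf M_k)_{j',2\ell-1}$ up to the reindexing built into the definition, and in fact the $\mu$-rows already assemble (after permuting columns and stripping the harmless linear-combination row) into the matrix $\mathbf M_k$.

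Next I would extract $\det\mathbf M_{k-1}$ from the decorated rows. The crucial identity is the Wronskian-of-Bessel relation $ I_0(t)K_1(t)+I_1(t)K_0(t)=\tfrac1t$, which at $u=1$ gives, for the ``boundary'' columns $j$ and $j+1$ flanking the $I_0\!\leftrightarrow\!K_0$ transition, differences of the form $\acute\mu^\ell_{k,j+1}(1)-\acute\mu^\ell_{k,j}(1)=-(\mathbf M_{k-1})$-entry, i.e. a single-scale moment with one fewer Bessel factor. Doing such column subtractions systematically (one for each adjacent pair that straddles the $I$--$K$ boundary) converts the $\acute\mu$ rows into a block that, after cofactor expansion along the newly-created columns of zeros, factors off exactly $\det\mathbf M_{k-1}$; the leftover $\mu$-block is $\det\mathbf M_k$. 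This is precisely the mechanism already seen in \eqref{eq:detM2_fac_prep}, only now iterated. The power $2^{(k-1)(2k-1)}$ comes along for the ride from the prefactor in \eqref{eq:varOmega_2k_alg}, and the sign $(-1)^{(k-1)(k-2)/2}$ is the parity of the permutation needed to reorder rows/columns back into the standard $\mathbf M_{k-1}$ and $\mathbf M_k$ orderings (it vanishes for $k=2,3$, consistent with Proposition \ref{prop:W_M2_1}).

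I expect the main obstacle to be bookkeeping rather than ideas: one must track (i) the reindexing in the definition of $\mu^\ell_{k,j}$ between the ``$I$-side'' range $j\in[2,k]$ and the ``$K$-side'' range $j\in[k+1,2k-1]$, together with the special first column $\mu^\ell_{k,1}$ which is a fixed linear combination of a $\IvKM$ and a $\IKvM$ moment; (ii) exactly which adjacent column differences produce the lower moments, and verifying that each produces a column supported only on the $\mu$-rows (so the cofactor expansion cleanly splits the determinant into a product); and (iii) the sign of the block permutation. Concretely I would (a) at $u=1$ identify the $k$ rows $(\mu^\ell_{k,1}(1),\dots,\mu^\ell_{k,2k-1}(1))$, $\ell=1,\dots,k$, with the rows of $\mathbf M_k$ after a column permutation $\sigma_k$, picking up $\sgn\sigma_k$; (b) subtract appropriate columns so that in the $k-1$ decorated rows the entries in all but $k-1$ columns vanish, using $I_0K_1+I_1K_0=1/t$ and the identities $\IpKM+\IKpM$-type cancellations to see the surviving entries are $\pm(\mathbf M_{k-1})$-entries; (c) perform Laplace/cofactor expansion along those $k-1$ distinguished columns, obtaining $\pm\det\mathbf M_{k-1}$ times the complementary $k\times k$ minor which is $\pm\det\mathbf M_k$; (d) collect all signs into $(-1)^{(k-1)(k-2)/2}$ and divide by $2^{(k-1)(2k-1)}$ to reach \eqref{eq:detMk_fac}. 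A sanity check against $k=2$ (Proposition \ref{prop:W_M2_1}) and, if needed, $k=3$ fixes any residual sign ambiguity.
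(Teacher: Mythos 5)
Your proposal follows the paper's route: evaluate \eqref{eq:varOmega_2k_alg} at $u=1$, note that the undecorated rows collapse to entries of $\mathbf M_k$, use $I_0(t)K_1(t)+I_1(t)K_0(t)=\tfrac1t$ to turn differences of decorated entries into entries of $\mathbf M_{k-1}$, and then factor a block-triangular determinant, tracking the sign of the row reordering. One concrete correction to your bookkeeping, though: the column subtractions are \emph{not} between adjacent columns ``flanking the $I$--$K$ transition.'' The pairs that match are column $j$ and column $k+j-1$ for $j\in\mathbb Z\cap[2,k]$, because $\mu^\ell_{k,j}$ and $\mu^\ell_{k,k+j-1}$ carry the same Bessel weight $(j,2k+1-j)$ and hence coincide at $u=1$, while
\begin{equation*}
\acute\mu^\ell_{k,k+j-1}(1)-\acute\mu^\ell_{k,j}(1)
=-\int_0^\infty\bigl[I_0(t)K_1(t)+I_1(t)K_0(t)\bigr][I_0(t)]^{j-1}[K_0(t)]^{2k-j}t^{2\ell}\,\D t
=-\mu^\ell_{k-1,j-1}.
\end{equation*}
For adjacent columns $j,j+1$ on the same side of the transition the undecorated entries are different moments, so the subtraction would create no zeros and the Wro\'nskian relation would not apply; only the single pair $(k,k+1)$ happens to be adjacent. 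With that fixed (and noting the resulting columns are supported on the $\acute\mu$-rows, not the $\mu$-rows as you wrote in point (ii)), your steps (a)--(d) reproduce the paper's proof; also, the sign $(-1)^{(k-1)(k-2)/2}$ equals $-1$ for $k=3$, so it does not ``vanish'' there, though your $k=2$ check is fine.
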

\begin{proof}In the formula\begin{align}\begin{split}&
2^{(k-1)(2k-1)}\varOmega_{2k-1}(1)\\={}&\det \begin{pmatrix}\mu_{k,1}^1(1) & \cdots&\mu^1_{k,k}(1)&\mu^1_{k,k+1}(1)&\cdots & \mu_{k,2k-1}^1(1) \\
\acute\mu_{k,1}^1(1) & \cdots&\acute\mu^1_{k,k}(1)&\acute\mu^1_{k,k+1}(1)&\cdots  & \acute \mu_{k,2k-1}^1(1) \\
\multicolumn{6}{c}{\cdots\cdots\cdots\cdots\cdots\cdots\cdots\cdots\cdots\cdots\cdots\cdots\cdots\cdots\cdots\cdots}   \\
\mu_{k,1}^k(1) & \cdots\ & \mu^k_{k,k}(1)&\mu^k_{k,k+1}(1)&\cdots &\mu_{k,2k-1}^k(1)  \\
\end{pmatrix},\end{split}
\end{align}we observe that \begin{align}
\left\{ \begin{array}{l}
\mu^\ell_{k,j}(1)=\mu^\ell_{k,k+j-1}(1)=\mu^\ell_{k,j}, \\\acute
\mu^\ell_{k,k+j-1}(1)-\acute \mu^\ell_{k,j}(1)=-\mu_{k-1,j-1}^\ell
\end{array} \right.
\end{align}for all $  j\in\mathbb Z\cap[2,k]$. Thus, we obtain, after column eliminations and row bubble sorts,\begin{align}\begin{split}&
2^{(k-1)(2k-1)}\varOmega_{2k-1}(1)\\={}&\det \begin{pmatrix}\mu_{k,1}^1 & \cdots&\mu^1_{k,k}&0&\cdots & 0 \\
\acute\mu_{k,1}^1(1) & \cdots&\acute\mu^1_{k,k}(1)&-\mu_{k-1,1}^1&\cdots  &  -\mu_{k-1,k-1}^1 \\
\multicolumn{6}{c}{\cdots\cdots\cdots\cdots\cdots\cdots\cdots\cdots\cdots\cdots\cdots\cdots\cdots\cdots\cdots\cdots} \\
\mu_{k,1}^k & \cdots\ & \mu^k_{k,k}&0&\cdots &0  \\
\end{pmatrix}\\={}&(-1)^{\frac{k(k-1)}{2}}\det\left( \begin{array}{c|c}\begin{array}{ccc}
 &  &  \\
 & \raisebox{-0.25\height}{\resizebox{1.75\width}{1.75\height}{$\mathbf M_k^{\mathrm T}$}} &  \\
 &  &  \\
\end{array}&\raisebox{-0.25\height}{\resizebox{1.75\width}{1.75\height}{$\mathbf O_{}$}}\\\hline\begin{array}{ccc}\vspace{-0.75em}\\\acute
\mu_{k,1}^1(1) & \cdots&\acute\mu^1_{k,k}(1) \\
\multicolumn{3}{c}{\cdots\cdots\cdots\cdots\cdots\cdots\cdots\cdots}  \\
\acute
\mu_{k,1}^{k-1}(1) & \cdots&\acute\mu^{k-1}_{k,k}(1) \\
\end{array}&\raisebox{-0.25\height}{\resizebox{1.75\width}{1.75\height}{$-\mathbf M_{k-1}^{\mathrm T}$}}\end{array} \right),\end{split}
\end{align}which factorizes as claimed.\end{proof}

\begin{proposition}[Factorization of $ \varOmega_{2k-1}(0^{+})$]The limit\begin{align}
\lim_{u\to0^+}u^{k(2k-1)/2}\varOmega_{2k-1}(u)=(-1)^{\frac{(k-1)(k-2)}{2}}\frac{k[\Gamma(k/2)]^{2}}{(2k+1)}\frac{(\det\mathbf N_{k-1})^2}{2^{(k-1)(2k-1)+1}}
\end{align}entails\begin{align}
\varOmega_{2k-1}(u)=\frac{(-1)^{\frac{(k-1)(k-2)}{2}}k[\Gamma(k/2)]^{2}}{u^{k(2k-1)/2}(2k+1)}\frac{(\det\mathbf N_{k-1})^2}{2^{(k-1)(2k-1)+1}}\prod _{j=1}^k\left[\frac{(2j)^2}{(2j)^2-u}\right]^{k-\frac{1}{2}},\quad \forall u\in(0,4).\label{eq:Omega_2k_1_u_rational}
\end{align}\end{proposition}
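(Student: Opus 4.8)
The plan is to peel off \eqref{eq:Omega_2k_1_u_rational} from the stated $u\to0^{+}$ limit, and then to compute that limit from the algebraic representation \eqref{eq:varOmega_2k_alg}. For the first step, the Vanhove equation \eqref{eq:W_ODE_Mk_prep0} from Lemma~\ref{lm:VanhoveLn}(b) says that $\varOmega_{2k-1}(u)$ has the same logarithmic derivative on $(0,4)$ as $\bigl[u^{k}\prod_{j=1}^{k}((2j)^{2}-u)\bigr]^{-(2k-1)/2}$, so the product $\bigl[u^{k}\prod_{j=1}^{k}((2j)^{2}-u)\bigr]^{(2k-1)/2}\varOmega_{2k-1}(u)$ is constant on $(0,4)$; multiplying by $u^{-k(2k-1)/2}$ and letting $u\to0^{+}$ identifies that constant as $\bigl[\prod_{j=1}^{k}(2j)^{2}\bigr]^{(2k-1)/2}=(2^{k}k!)^{2k-1}$ times the limit. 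Substituting the asserted value of the limit and rewriting $(2^{k}k!)^{2k-1}\big/\prod_{j=1}^{k}[(2j)^{2}-u]^{(2k-1)/2}=\prod_{j=1}^{k}[(2j)^{2}/((2j)^{2}-u)]^{\,k-1/2}$ reproduces \eqref{eq:Omega_2k_1_u_rational} verbatim, so it remains to evaluate the limit.

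For the limit I start from \eqref{eq:varOmega_2k_alg}, which gives $u^{k(2k-1)/2}\varOmega_{2k-1}(u)=2^{-(k-1)(2k-1)}\,u^{(2k-1)/2}\det M(u)$, where $M(u)$ is the displayed $(2k-1)\times(2k-1)$ matrix with rows $\mu^{1}_{k,\bullet}(u),\acute\mu^{1}_{k,\bullet}(u),\mu^{2}_{k,\bullet}(u),\dots,\acute\mu^{k-1}_{k,\bullet}(u),\mu^{k}_{k,\bullet}(u)$. By multilinearity I would distribute $u^{(2k-1)/2}$ over the rows, weighting the $\mu^{\ell}$-row by $u^{(2\ell-k)/2}$ and each $\acute\mu^{\ell}$-row by $u^{1/2}$ (the exponents summing to $\tfrac k2+\tfrac{k-1}2=\tfrac{2k-1}2$; for $\ell<k/2$ the $\mu^{\ell}$-weight is a negative power of $u$, so that whole row is permitted to diverge). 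I would then record the $u\to0^{+}$ asymptotics of the rescaled entries, the three operative mechanisms being: (i) $I_{0}(\sqrt{u}\,t)\to1$, so that the ``$\IvKM$'' columns $j\in[2,k]$ become Broadhurst--Mellit entries, $\mu^{\ell}_{k,j}(u)\to\IKM(j-1,2k+1-j;2\ell-1)=(\mathbf N_{k-1})_{j-1,\ell}$; (ii) $sK_{1}(s)\to1$ as $s\to0^{+}$ (quantified by \eqref{eq:K1_sup}), so that $\sqrt u$ times an $\IKpM$-type entry converges, in particular $\sqrt u\,\acute\mu^{\ell}_{k,1}(u)\to-\tfrac{2k}{2k+1}\IKM(1,2k-1;2\ell-1)=-\tfrac{2k}{2k+1}(\mathbf N_{k-1})_{1,\ell}$, and $\sqrt u\,\acute\mu^{\ell}_{k,k+j-1}(u)\to-\IKM(j,2k-j;2\ell-1)$ for $j\in[2,k]$, which for $j\le k-1$ is again an entry of $\mathbf N_{k-1}$; and (iii) the evaluations $\int_{0}^{\infty}s^{\mu-1}K_{0}(s)\,\D s=2^{\mu-2}[\Gamma(\mu/2)]^{2}$ ($\mu>0$) fed with the large-$t$ expansion $[I_{0}(t)K_{0}(t)]^{k}=(2t)^{-k}(1+O(t^{-2}))$ from \eqref{eq:IK_asympt}--\eqref{eq:IK_sup}, which give the finite limits of the rescaled $\IKvM$-column entries in the $\mu$-rows; in particular the row-$\mu^{k}_{k,\bullet}$, column-$(2k-1)$ entry limits to $\tfrac14[\Gamma(k/2)]^{2}$, the source of the $[\Gamma(k/2)]^{2}$ in the answer.

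Armed with these asymptotics I would combine the $\IvKM$-column $j$ with the $\IKvM$-column $k+j-1$ (in the spirit of the $\varOmega_{2k-1}(1)$ factorization), subtract off the leading $\log u$- and $u^{-1/2}$-singularities within the divergent rows, and only then pass to the limit; the resulting constant matrix becomes block-triangular after a bubble sort of its rows and columns, with three diagonal blocks: a $(k-1)\times(k-1)$ block equal up to transposition to $\mathbf N_{k-1}$ (mechanism (i), on rows $\mu^{1},\dots,\mu^{k-1}$ and the $\IvKM$-columns); a $(k-1)\times(k-1)$ block equal up to transposition to $\mathbf N_{k-1}$ with its column coming from the special index $j=1$ rescaled by $-\tfrac{2k}{2k+1}$ and its other columns by $-1$, of determinant $(-1)^{k-1}\tfrac{2k}{2k+1}\det\mathbf N_{k-1}$ (mechanism (ii), on rows $\acute\mu^{1},\dots,\acute\mu^{k-1}$ and the columns $\{1\}\cup\{k+1,\dots,2k-2\}$); and the $1\times1$ block $\tfrac14[\Gamma(k/2)]^{2}$ (mechanism (iii), on row $\mu^{k}$ and column $2k-1$). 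Multiplying the three determinants, tracking the powers of $2$, and checking that the permutation sign of the bubble sort combines with the $(-1)^{k-1}$ of the middle block to give $(-1)^{(k-1)(k-2)/2}$, then yields the asserted limit, and the reduction above finishes the proof. The hard part is precisely this last passage: since several whole rows of the rescaled matrix blow up, the determinant is a delicate balance of diverging factors against vanishing cofactors, so one must control enough subleading terms, and choose the column pairings and row reductions just right, to expose the triple block structure with no residual cross terms --- the arbitrary-$k$ analogue of the concrete $k=2$ computation of Proposition~\ref{prop:W_M2_0} in \S\ref{sec:detM2} --- and to keep the parity of the accompanying permutation under control.
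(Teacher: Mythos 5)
Your route is the paper's own: Lemma \ref{lm:VanhoveLn}(b) makes $[u^{k}\prod_{j=1}^{k}((2j)^{2}-u)]^{(2k-1)/2}\varOmega_{2k-1}(u)$ constant on $(0,4)$, the constant is read off from the $u\to0^{+}$ limit, and that limit is extracted from \eqref{eq:varOmega_2k_alg} by exactly the three mechanisms and three diagonal blocks you describe (the $\mathbf N_{k-1}^{\mathrm T}$ block from the $\IvKM$-columns against the $\mu$-rows, the $-\tfrac{2k}{2k+1}$-adjusted copy from columns $\{1\}\cup(\mathbb Z\cap[k+1,2k-2])$ against the $\acute\mu$-rows, and Heaviside's $\tfrac14[\Gamma(k/2)]^{2}$); your accounting of the factor $\tfrac{k[\Gamma(k/2)]^{2}}{2(2k+1)}$ and of the sign agrees with the paper's. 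Two concrete problems remain, however.

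First, your row normalization is the wrong one for the argument you then run. If the $\mu^{\ell}$-row is weighted by $u^{(2\ell-k)/2}$, the entries in columns $j\in\mathbb Z\cap[2,k]$ --- precisely the ones you need to converge to $(\mathbf N_{k-1})_{j-1,\ell}$ and form your first diagonal block --- instead tend to $0$ when $2\ell>k$ and blow up when $2\ell<k$, so the block you propose to ``pass to the limit'' and read off is not there under that rescaling. (The determinant itself is unaffected, since $\sum_{\ell=1}^{k-1}(2\ell-k)=0$, but the entrywise-limit justification collapses.) The paper instead leaves the $k-1$ upper $\mu$-rows untouched and loads the entire factor $u^{k/2}$ onto the single bottom row $\mu^{k}_{k,\bullet}$; that is what makes the bottom row $O(u^{k/2}\log u)$ off the last column while its last entry tends to $\tfrac14[\Gamma(k/2)]^{2}$, and leaves both $\mathbf N_{k-1}$-blocks with genuine finite limits. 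Second, the singularity you propose to subtract from the last column, ``$u^{-1/2}$'', underestimates its growth: $\mu^{\ell}_{k,2k-1}(u)\asymp u^{(k-2\ell)/2}$, which at $\ell=k-1$ is $u^{-(k-2)/2}$. The cross terms still vanish, but only because any permutation term using such an entry is forced to take an off-$(2k-1)$ entry of the $u^{k/2}$-weighted bottom row, giving $O(u^{-(k-2)/2}\cdot u^{k/2}\log u)=O(u\log u)$; this pairing argument (rather than a subtraction of singular parts) is exactly the step your closing paragraph defers, and it is the one place where the normalization issue above is not merely cosmetic.
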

\begin{proof}As we compare the representation \begin{align}\begin{split}&
2^{(k-1)(2k-1)}u^{k(2k-1)/2}\varOmega_{2k-1}(u)\\={}&\det\left( \begin{array}{c|c}\begin{array}{rrr}\phantom{u^{k/2}}\mu_{k,1}^1(u) & \cdots\ & \phantom{u^{k/2}}\mu_{k,2k-2}^1(u) \\
\sqrt{u}\acute\mu_{k,1}^1(u) & \cdots\ & \sqrt{u}\acute \mu_{k,2k-2}^1(u) \\
\multicolumn{3}{c}{\cdots\cdots\cdots\cdots\cdots\cdots\cdots\cdots\cdots}  \ \\
\end{array}&\begin{array}{r}\phantom{u^{k/2}}\mu_{k,2k-1}^1(u)\\\sqrt{u}\acute \mu_{k,2k-1}^1(u)\\\cdots\cdots\cdots\cdots\cdots\end{array}\\\hline\begin{array}{rrr}\phantom{\sqrt{u}\acute\mu_{k,1}^1(u) }& \phantom{\cdots} & \phantom{\sqrt{u}\acute \mu_{k,2k-2}^1(u)}\\[-10pt]u^{k/2}\mu_{k,1}^k(u) & \cdots\ & u^{k/2}\mu_{k,2k-2}^k(u)\end{array}&\begin{array}{r}\phantom{\sqrt{u}\acute \mu_{k,2k-1}^1(u)}\\[-10pt]u^{k/2}\mu_{k,2k-1}^k(u)\end{array}\end{array}\right),\end{split}\label{eq:Omega_2k_1_0_fac}
\end{align}with \eqref{eq:varOmega_2k_alg}, we see that each row involving $ \acute \mu_{k,j}^\ell$ now bears an additional pre-factor of $ \sqrt u$; the first $(k-1)$ rows involving $\mu_{k,j}^\ell$ are left intact, but the bottom row in \eqref{eq:varOmega_2k_alg} is multiplied by a factor of $u^{k/2}$. Clearly, this setting hearkens back to \eqref{eq:Omega3u_0_prefactor}.

Akin to what we had in Proposition \ref{prop:W_M2_0} when $u$ is a positive infinitesimal, we can establish  the following asymptotic behavior of the first $(2k-2)$  columns in \eqref{eq:Omega_2k_1_0_fac}:\begin{align}\mu_{k,j}^\ell(u)=
\begin{cases}O(\log u), & j\in\{1\}\cup(\mathbb Z\cap[k+1,2k-2]) \\
\nu^\ell_{k-1,j-1}+O(u), & j\in\mathbb Z\cap[2,k]
\end{cases}\label{eq:mu_0_est}\end{align}for $ \ell\in\mathbb Z\cap[1,k]$, and \begin{align}
\sqrt{u}\acute \mu_{k,j}^\ell(u)=\begin{cases}-\frac{2k}{2k+1}\nu^\ell_{k-1,1}+o(1), & j=1 \\
O(u), & j\in\mathbb Z\cap[2,k] \\
-\nu^{\ell}_{k-1,j-k-1}+o(1), & j\in\mathbb Z\cap[k+1,2k-2] \\
\end{cases}
\end{align} for $ \ell\in\mathbb Z\cap[1,k-1]$. Here, it is understood that when $ k=2$, the closed interval $ [k+1,2k-2]=[3,2]=\varnothing$ is the empty set, so $ \{1\}\cup(\mathbb Z\cap[k+1,2k-2])$ degenerates to $ \{1\}$ in this scenario. We also bear in mind that the bottom row in \eqref{eq:Omega_2k_1_0_fac} carries an additional factor of $ u^{k/2}$, so  the estimate in \eqref{eq:mu_0_est} tells us that  the bottom-left section of the partitioned matrix in  \eqref{eq:Omega_2k_1_0_fac} contains only infinitesimal elements, with order at most $O(u^{k/2}\log u)$.

Meanwhile, we point out that the top-right block in    \eqref{eq:Omega_2k_1_0_fac} contains elements of order  $ O(1/\sqrt{u}) $, according to the rationale in \eqref{eq:IKvM231} and \eqref{eq:IKpM231}. The bottom-right element behaves like\begin{align}\begin{split}
u^{k/2}\mu_{k,2k-1}^k(u)={}&\frac{u^{k/2}}{2^{k}}\int_0^\infty K_0(\sqrt{u}t)t^{k-1}\D t\\{}&+u^{k/2}\int_{0}^\infty K_0(\sqrt{u}t)\left\{[I_{0}(t)K_{0}(t)]^{k}-\frac{1}{(2t)^{k}}\right\}t^{2k-1}\D t\\={}&\frac{1}{2^{k}}\int_0^\infty K_0(t)t^{k-1}\D t+O\left( u^{k/2} \int_{0}^\infty K_0(\sqrt{u}t)t^{k-2}\D t\right)\\={}&\frac{[\Gamma(k/2)]^{2}}{4}+O(\sqrt{u}),\end{split}
\end{align}where we have quoted the evaluation of  $ \int_0^\infty K_0(t)t^{k-1}\D t$ from Heaviside's integral formula \cite[][\S13.21(8)]{Watson1944Bessel}.

After taking care of the sign changes due to row and column permutations, we conclude that \begin{align}&
2^{(k-1)(2k-1)}\lim_{u\to0^+}u^{k(2k-1)/2}\varOmega_{2k-1}(u)=(-1)^{\frac{(k-1)(k-2)}{2}}\frac{k[\Gamma(k/2)]^{2}}{2(2k+1)}(\det\mathbf N_{k-1})^2
\end{align}   as claimed. \end{proof}Therefore, we obtain the recursion relation in \eqref{eq:detM_rec}, after comparing  \eqref{eq:detMk_fac} with  \eqref{eq:Omega_2k_1_u_rational}.
\subsection{Reduction of $ \det \mathbf N_k$ to $ \det\mathbf M_{k}$ and $ \det\mathbf N_{k-1}$}
Before factorizing $\omega_{2k} $ (as generalizations of Propositions \ref{prop:W_N2_1} and \ref{prop:W_N2_0}), we need to build some asymptotic formulae on hypergeometric techniques.
\begin{lemma}[Euler--Gau{\ss}--Schafheitlin--Weber]We have  \begin{align}\int_0^\infty I_0(\sqrt{u}t)K_0(t)t^n\D t={}&\begin{cases}-\dfrac{\log(1-u)}{2}+O(1), & n=0, \\
\dfrac{2^{n-1}(n-1)!}{(1-u)^{n}}+o\left(\dfrac{1}{(1-u)^{n}}\right), & n\in\mathbb Z_{>0,} \\
\end{cases}\label{eq:I0K0}\intertext{and}
\int_0^\infty I_1(\sqrt{u}t)K_0(t)t^{n}\D t={}&\begin{cases}-\dfrac{\log(1-u)}{2}+O(1), & n=0, \\
\dfrac{2^{n-1}(n-1)!}{(1-u)^{n}}+o\left(\dfrac{1}{(1-u)^{n}}\right), & n\in\mathbb Z_{>0,} \\
\end{cases}\label{eq:I1K0}
\end{align}as $ u\to1^-$.\end{lemma}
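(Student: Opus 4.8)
The plan is to express both integrals as Gauss hypergeometric functions of the scaling parameter $u$, and then read the $u\to1^-$ asymptotics off the classical connection formulae of Euler and Gauss at the singular point $z=1$ of ${}_2F_1$. First I would record the two generalized Weber--Schafheitlin evaluations, valid for $0\le u<1$:
\begin{align*}
\int_0^\infty I_0(\sqrt u\,t)K_0(t)t^n\D t&=2^{n-1}\left[\Gamma\!\left(\tfrac{n+1}{2}\right)\right]^2{}_2F_1\!\left(\tfrac{n+1}{2},\tfrac{n+1}{2};1;u\right),\\
\int_0^\infty I_1(\sqrt u\,t)K_0(t)t^n\D t&=2^{n-1}\sqrt u\left[\Gamma\!\left(\tfrac{n+2}{2}\right)\right]^2{}_2F_1\!\left(\tfrac{n+2}{2},\tfrac{n+2}{2};2;u\right).
\end{align*}
Both integrals converge absolutely on $[0,1)$ (near $t=0$ the integrand is $O(t^{n}\log t)$ or $O(t^{n+1}\log t)$, while for $t\to\infty$ one has $I_\mu(\sqrt u\,t)K_0(t)=O\!\bigl(t^{-1}e^{-(1-\sqrt u)t}\bigr)$ with $1-\sqrt u>0$), and the displayed identities follow from Watson's formula for $\int_0^\infty K_\mu(at)I_\nu(bt)t^s\D t$ with $a>b$ \cite[cf.][\S13.45]{Watson1944Bessel}, specialized to $(\mu,\nu,a,b)=(0,0,1,\sqrt u)$ and $(0,1,1,\sqrt u)$; since both sides are holomorphic on the unit disk it suffices to match them near $u=0$, where the expansions are elementary. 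As consistency checks these recover $\int_0^\infty I_0(\sqrt u\,t)K_0(t)t\D t=(1-u)^{-1}$, $\int_0^\infty I_1(\sqrt u\,t)K_0(t)\D t=-\tfrac{1}{2\sqrt u}\log(1-u)$ and $\int_0^\infty I_1(\sqrt u\,t)K_0(t)t^2\D t=2\sqrt u(1-u)^{-2}$, already used in \S\ref{sec:detN2}.

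Next I would feed in the behaviour of ${}_2F_1(a,b;c;z)$ as $z\to1^-$ for the two families above, in each of which $c-a-b=-n$ is a non-positive integer. For $n=0$ (the case $c=a+b$) the Gauss summation degenerates logarithmically, ${}_2F_1(a,b;a+b;z)=-\tfrac{\Gamma(a+b)}{\Gamma(a)\Gamma(b)}\log(1-z)+O(1)$; inserting $a=b=\tfrac12,c=1$ and $a=b=1,c=2$ respectively, both right-hand sides collapse to $-\tfrac12\log(1-u)+O(1)$. For $n\in\mathbb Z_{>0}$ one invokes the logarithmic form of the Euler--Gauss connection relation at $z=1$: its head term is $\tfrac{\Gamma(c)\Gamma(n)}{\Gamma(a)\Gamma(b)}(1-z)^{-n}$, while every further contribution is $O\!\bigl((1-z)^{-n+1}\bigr)$ or $O\!\bigl(\log(1-z)\bigr)$, hence $o\!\bigl((1-z)^{-n}\bigr)$. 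Substituting $a=b=\tfrac{n+1}{2},c=1$ and $a=b=\tfrac{n+2}{2},c=2$ makes the $[\Gamma(\tfrac{n+1}{2})]^2$ and $[\Gamma(\tfrac{n+2}{2})]^2$ prefactors cancel, leaving $2^{n-1}(n-1)!\,(1-u)^{-n}+o\!\bigl((1-u)^{-n}\bigr)$ and $2^{n-1}(n-1)!\sqrt u\,(1-u)^{-n}+o\!\bigl((1-u)^{-n}\bigr)$ respectively; finally $\sqrt u=1+O(1-u)$ is absorbed into the error term, yielding \eqref{eq:I0K0} and \eqref{eq:I1K0}.

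The step that needs real care is the degenerate character of the connection formula: since $c-a-b$ here is \emph{exactly} a non-positive integer, one must use the logarithmic version of the $z=1$ connection relation for ${}_2F_1$ rather than the generic two-term one, and then verify that the $\log(1-z)$-contribution and the subleading powers of $(1-z)$ are genuinely of lower order than the $(1-z)^{-n}$ head term (for $n\ge1$), or bounded (for $n=0$). Checking the hypotheses of the Weber--Schafheitlin formula and the holomorphic-continuation argument that pins the identities down on $[0,1)$ is routine by comparison. A self-contained alternative bypasses ${}_2F_1$ altogether: split the integral at a large cutoff, use \eqref{eq:IK_asympt} to replace $I_\mu(\sqrt u\,t)K_0(t)$ by $\tfrac{1}{2u^{1/4}}t^{-1}e^{-(1-\sqrt u)t}$ up to an integrable remainder, and integrate $\int t^{n-1}e^{-(1-\sqrt u)t}\D t\sim(n-1)!\,(1-\sqrt u)^{-n}$ using $(1-\sqrt u)^{-1}=2(1-u)^{-1}\bigl(1+O(1-u)\bigr)$; but the hypergeometric route is shorter and matches the name of the lemma.
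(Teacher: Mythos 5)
Your proof is correct, and its skeleton --- reduce both integrals to ${}_2F_1$'s of argument $u$ via the modified Weber--Schafheitlin formula, then read off the $u\to1^-$ behaviour at the singular point $z=1$ --- is exactly the paper's. The one place you genuinely diverge is the step you yourself flag as delicate: for $n\in\mathbb Z_{>0}$ you invoke the degenerate (logarithmic) connection formula at $z=1$ for the case $c-a-b=-n$, and must then check that the $\log(1-z)$ contribution and the subleading powers are $o\bigl((1-z)^{-n}\bigr)$. The paper sidesteps the degenerate connection formula entirely: it first applies the Euler transformation ${}_2F_1(a,b;c;u)=(1-u)^{c-a-b}\,{}_2F_1(c-a,c-b;c;u)$ with $a=b=\frac{1-n}{2}$, $c=1$ (resp.\ $a=b=\frac{2-n}{2}$, $c=2$), which rewrites ${}_2F_1\bigl(\frac{n+1}{2},\frac{n+1}{2};1;u\bigr)$ as $(1-u)^{-n}\,{}_2F_1\bigl(\frac{1-n}{2},\frac{1-n}{2};1;u\bigr)$; the latter factor now has positive parametric excess $c-a-b=n>0$, so it converges at $u=1$ to $\Gamma(n)/[\Gamma(\frac{n+1}{2})]^2$ by the ordinary Gau{\ss} summation, and continuity delivers the stated head term with an $o\bigl((1-u)^{-n}\bigr)$ error for free. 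Both routes are valid and give the same coefficient $\Gamma(c)\Gamma(n)/[\Gamma(a)\Gamma(b)]$, which cancels the gamma prefactors exactly as you say; the paper's version buys a proof resting on only two non-degenerate textbook theorems and no bookkeeping of logarithmic terms, whereas yours requires correctly quoting the more elaborate degenerate expansion, which is precisely where an error would most likely creep in. Your proposed fallback via a direct Laplace-type estimate using \eqref{eq:IK_asympt} would also work, though, as you note, the hypergeometric route is the one matching the lemma's name.
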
\begin{proof}According to  the  modified Weber--Schafheitlin integral formula \cite[][\S13.45]{Watson1944Bessel}, we have\begin{align}\int_0^\infty I_0(\sqrt{u}t)K_0(t)t^n\D t={}&2^{n -1} \left[\Gamma \left(\frac{n +1}{2}\right)\right]^2 \, _2F_1\left(\left.\begin{array}{c}
\frac{n+1}{2},\frac{n+1}{2} \\
1 \\
\end{array}\right|u\right),\label{}\\\int_0^\infty I_1(\sqrt{u}t)K_0(t)t^n\D t={}&2^{n -1} \sqrt{u}\left[\Gamma \left(\frac{n +2}{2}\right)\right]^2 \, _2F_1\left(\left.\begin{array}{c}
\frac{n+2}{2},\frac{n+2}{2} \\
2 \\
\end{array}\right|u\right),\end{align}where the $_2F_1$'s are hypergeometric functions. When $ n=0$, the asymptotic behavior $ -\frac{\log(1-u)}{2}+O(1)$ can be found directly in both cases above; to prove \eqref{eq:I0K0} [resp.~\eqref{eq:I1K0}] when $n\in\mathbb Z_{>0}$, we need to specialize the Gau{\ss} summation \cite[][Theorem 2.2.2]{AAR}: \begin{align}
_2F_1\left(\left.\begin{array}{c}
a,b\ \\
c
\end{array}\right|1\right)=\frac{\Gamma(c)\Gamma(c-a-b)}{\Gamma(c-a)\Gamma(c-b)},\quad \text{for }\R(c-a-b)>0
\end{align}and the Euler transformation  \cite[][Theorem 2.2.5]{AAR}: \begin{align}
_2F_1\left(\left.\begin{array}{c}
a,b\ \\
c
\end{array}\right|u\right)=(1-u)^{c-a-b}{_2F_1}\left(\left.\begin{array}{c}
c-a,c-b\ \\
c
\end{array}\right|u\right)
\end{align}to $a=\frac{1-n}{2} ,b=\frac{1-n}{2},c=1$ (resp.~$a=\frac{2-n}{2} ,b=\frac{2-n}{2},c=2$).\end{proof}\begin{proposition}[Factorization of $ \omega_{2k}(1^{-})$]We have the following identity:\begin{align}
\lim_{u\to1^-}(1-u)^k\omega_{2k}(u)=(-1)^{\frac{k(k-1)}{2}}\frac{(k-1)!}{2^{(2k-1)k+1}}\det \mathbf N_{k-1}\det\mathbf N_{k}.\label{eq:W_detNk_fac}
\end{align}\end{proposition}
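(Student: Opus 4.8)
The strategy mirrors Proposition~\ref{prop:W_N2_1} verbatim, only with bookkeeping for general $k$. I would start from the algebraic rewrite \eqref{eq:omega_2k_alg}, which expresses $(2\sqrt u)^{(2k-1)k}\omega_{2k}(u)$ as a $2k\times 2k$ determinant built from the $\nu^\ell_{k,j}(u)$ and $\acute\nu^\ell_{k,j}(u)$ rows, for $u\in(0,1)$. Multiplying through by $u^{-k}$ and tracking the prefactor $2^{(2k-1)k}u^k(1-u)^k\omega_{2k}(u)$, I would examine the $u\to1^-$ limit column by column, exactly as in the $k=2$ case.

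The key observation is the column-degeneracy at $u=1$: for $j\in\mathbb Z\cap[2,k]$ one has $\nu^\ell_{k,j}(1)=\nu^\ell_{k,k+j}(1)=\nu^\ell_{k,j}=(\mathbf N_k)_{j,\ell}$ (since $I_0(t)=I_0(\sqrt u\,t)$ and $K_0(t)=K_0(\sqrt u\,t)$ coincide at $u=1$), so the columns labelled $j$ and $k+j$ become equal in the limit, forcing me to extract the $\acute\nu$-row differences. The relevant identity, generalizing $\acute\nu^1_{2,4}(1)-\acute\nu^1_{2,2}(1)=-\IKM(1,3;1)$, is $\acute\nu^\ell_{k,k+j}(1)-\acute\nu^\ell_{k,j}(1)=-\nu^\ell_{k-1,j-1}=-(\mathbf N_{k-1})_{j-1,\ell}$ for $j\in\mathbb Z\cap[2,k]$, which follows from the Wro\'nskian of $I_0K_1+I_1K_0=1/t$. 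After subtracting column $j$ from column $k+j$ for each such $j$, the columns $k+2,\dots,2k$ are replaced by columns of $-\nu^\ell_{k-1,j-1}$'s while the first column ($\nu_{k,1}$-type) stays, and the only remaining "problem" column is the middle one indexed $j=k+1$ (the $\IKvM(1,2k+1;\cdot|u)$-type entry in the $\nu$-block), whose $\nu$ and $\acute\nu$ rows blow up as $u\to1^-$. Using the Weber--Schafheitlin asymptotics $\int_0^\infty I_0(\sqrt u\,t)K_0(t)t^n\,\D t$ and $\int_0^\infty I_1(\sqrt u\,t)K_0(t)t^n\,\D t$ from the Euler--Gau\ss--Schafheitlin--Weber lemma, together with the bounds $\sup_{t>0}t^{2s}|[I_0(t)K_0(t)]^2-\frac1{4t^2}|<\infty$ for $s\in\{1,\dots,k\}$ used to peel off the leading singular term, I would show that $(1-u)^k$ exactly compensates the blow-up of the single non-vanishing surviving entry in that column (the one at the appropriate $\acute\nu^k$ row, contributing a finite constant $\pm\tfrac12\sqrt u\to\pm\tfrac12$), while every other entry in that column, multiplied by $(1-u)^k$, tends to $0$.

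With a single nonzero entry $\pm\frac12$ surviving in the middle column, a cofactor expansion collapses the $2k\times 2k$ determinant to $\pm\frac12$ times a $(2k-1)\times(2k-1)$ determinant whose columns are: one $\nu_{k,1}$-type column, the columns $\nu^\ell_{k,j}$ for $j\in\mathbb Z\cap[2,k]$ (these are $(\mathbf N_k)$-columns), and the columns $-\nu^\ell_{k-1,j-1}$ for $j\in\mathbb Z\cap[2,k]$ (these are $(\mathbf N_{k-1})$-columns), together with the residual $\acute\nu$-rows. Exactly as in the passage from \eqref{eq:detMk_fac}'s proof, I would perform row bubble-sorts and one more round of column eliminations — replacing the lone $\nu_{k,1}$-column and combining with the $\acute\nu$-rows so that the determinant block-triangularizes — to split it as $\det\mathbf N_k^{\mathrm T}\cdot\det(-\mathbf N_{k-1}^{\mathrm T})$ up to an explicit sign $(-1)^{k(k-1)/2}$ and an explicit power of $2$. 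Reinstating the prefactor $(2\sqrt u)^{(2k-1)k}u^{-k}\to 2^{(2k-1)k}$ then yields \eqref{eq:W_detNk_fac}. The main obstacle is purely combinatorial bookkeeping: correctly tracking the sign from the bubble-sorts that interleave the $\nu^\ell$ and $\acute\nu^\ell$ rows and from the column permutations, and verifying that the specified power of $2$ in \eqref{eq:W_detNk_fac} (namely $2^{(2k-1)k+1}$) is exactly what emerges — there are no genuinely new analytic difficulties beyond the Weber--Schafheitlin estimates already assembled in the lemmas above, which is why I would present this proof as "proceed as in Proposition~\ref{prop:W_N2_1}, \emph{mutatis mutandis}."
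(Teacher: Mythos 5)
Your plan is the paper's proof: start from \eqref{eq:omega_2k_alg}, observe that every column except $j=k+1$ has a finite limit as $u\to1^-$, isolate the divergent column $j=k+1$, show that after multiplication by $(1-u)^k$ only one entry of that column survives, expand along that entry, and then run the same column eliminations ($\nu^\ell_{k,j}(1)=\nu^\ell_{k,k+j}(1)$, $\acute\nu^\ell_{k,k+j}(1)-\acute\nu^\ell_{k,j}(1)=-\nu^\ell_{k-1,j-1}$) and bubble sorts as in the $\varOmega_{2k-1}(1)$ factorization to block-triangularize into $\det\mathbf N_k^{\mathrm T}\cdot\det(-\mathbf N_{k-1}^{\mathrm T})$. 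That is exactly what the paper does.

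However, there is one concrete quantitative error, and it sits at the only point where the general-$k$ computation genuinely differs from the $k=2$ case. You assert that the surviving pivot entry contributes ``a finite constant $\pm\tfrac12\sqrt u\to\pm\tfrac12$''. That is the $k=2$ value carried over verbatim. For general $k$ the pivot is $\lim_{u\to1^-}(1-u)^k\acute\nu^k_{k,k+1}(u)$, and peeling off $[I_0(t)K_0(t)]^k-\tfrac1{(2t)^k}$ (note: the relevant bound is $\sup_{t>0}t^{k+1}\bigl\vert[I_0(t)K_0(t)]^k-\tfrac{1}{(2t)^{k}}\bigr\vert<\infty$, not the $k=2$ bound you quote) reduces it to $\tfrac{1}{2^k}\int_0^\infty I_1(\sqrt{u}t)K_0(t)t^k\,\D t\sim\tfrac{1}{2^k}\cdot\tfrac{2^{k-1}(k-1)!}{(1-u)^k}$ by \eqref{eq:I1K0} with $n=k$, so the limit is $\tfrac{(k-1)!}{2}$. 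This is precisely the origin of the factor $(k-1)!$ in \eqref{eq:W_detNk_fac}; with your claimed value of $\pm\tfrac12$ the bookkeeping you describe would produce a final formula off by $(k-1)!$. (A smaller slip: the divergent column $j=k+1$ is the $\IvKM(k+1,k+1;\cdot\,|u)$ entry --- the one with the maximal number of $I_0$ factors, which is why it diverges at $u=1$ --- not an $\IKvM(1,2k+1;\cdot\,|u)$ entry, though you do identify the correct index.) With the pivot corrected to $\tfrac{(k-1)!}{2}$, the rest of your combinatorial assembly goes through as in the paper.
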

\begin{proof}We will use the representation of $ (2\sqrt{u})^{(2k-1)k}\omega_{2k}(u)$ in \eqref{eq:omega_2k_alg}.

From the exponential decays (for large $t$) in the respective integrands, it is clear that the following limits exist as finite real numbers, so long as $ j\in[1,k]\cup[k+2,2k]$ and $\ell\in\mathbb Z\cap[1,k] $:\begin{align}
\lim_{u\to1^-}\nu^\ell_{k,j}(u)={}&\nu^\ell_{k,j}(1),\\\lim_{u\to1^-}\acute\nu^\ell_{k,j}(u)={}&\acute\nu^\ell_{k,j}(1).
\end{align}So we need to examine the behavior of $ (1-u)^k\nu^\ell_{k,k+1}(u)$ and $ (1-u)^k\acute\nu^\ell_{k,k+1}(u)$, as $u$ approaches $1$ from below.

First, we consider \begin{align}
\nu^\ell_{k,k+1}(u)=\int_{0}^\infty I_0(\sqrt{u}t)[I_0(t)]^k[K_0(t)]^{k+1}t^{2\ell-1}\D t.
\end{align}When $ 2\ell-k-1<0$, the integral $ \nu^{\ell}_{k,k+1}(1)$ is finite (thanks to power law decay of the integrand for large $t$), and is equal to $ \lim_{u\to1^-}\nu^{\ell}_{k,k+1}(u)$.
Using the fact that \begin{align}
\sup_{t>0}t^{k}\left\vert[I_0(t)K_0(t)]^k-\frac{1}{(2t)^{k}}\right\vert<\infty,\label{eq:IKk_sup}
\end{align}we may deduce \begin{align}\begin{split}
\nu^\ell_{k,k+1}(u)={}&\frac{1}{2^{k}}\int_{0}^\infty I_0(\sqrt{u}t)K_0(t)t^{2\ell-1-k}\D t\\&+\int_{0}^\infty I_0(\sqrt{u}t)K_0(t)\left\{ [I_0(t)K_0(t)]^k-\frac{1}{(2t)^{k}} \right\}t^{2\ell-1}\D t\\={}&O\left( \int_{0}^\infty I_0(\sqrt{u}t)K_0(t)t^{2\ell-1-k}\D t \right)=\begin{cases}O((1-u)^{k+1-2\ell}), & 2 \ell>k+1\\
O(\log(1-u)), & 2 \ell=k+1 \\
\end{cases}\end{split}\label{eq:IvKM11_est}
\end{align}when $ 2\ell-1-k\in\mathbb Z_{\geq0}$, and \eqref{eq:I0K0} is applicable.

Then, we consider\begin{align}\acute
\nu^\ell_{k,k+1}(u)=\int_{0}^\infty I_1(\sqrt{u}t)[I_0(t)]^k[K_0(t)]^{k+1}t^{2\ell}\D t.
\end{align}When $ 2\ell-k<0$, the integral $\acute \nu^{\ell}_{k,k+1}(1)$ is finite (thanks to power law decay of the integrand for large $t$), and is equal to $ \lim_{u\to1^-}\acute\nu^{\ell}_{k,k+1}(u)$.
Using   \eqref{eq:IKk_sup} and \eqref{eq:I1K0}, we may deduce \begin{align}\begin{split}
\acute\nu^\ell_{k,k+1}(u)={}&\frac{1}{2^{k}}\int_{0}^\infty I_1(\sqrt{u}t)K_0(t)t^{2\ell-k}\D t\\&+\int_{0}^\infty I_1(\sqrt{u}t)K_0(t)\left\{ [I_0(t)K_0(t)]^k-\frac{1}{(2t)^{k}} \right\}t^{2\ell}\D t\\={}&O\left( \int_{0}^\infty I_1(\sqrt{u}t)K_0(t)t^{2\ell-k}\D t \right)=\begin{cases}O((1-u)^{k-2\ell}), & 2 \ell>k\\
O(\log(1-u)), & 2 \ell=k \\
\end{cases}\end{split}\label{eq:IvKM11_est}
\end{align}when $ 2\ell-k\in\mathbb Z_{\geq0}$.

Summarizing the efforts in the last two paragraphs, we see that only the term $ (1-u)^k\acute\nu^k_{k,k+1}(u)$ will  play a consequential r\^ole in the $ u\to1^-$ regime. Applying the bound\begin{align}
\sup_{t>0}t^{k+1}\left\vert[I_0(t)K_0(t)]^k-\frac{1}{(2t)^{k}}\right\vert<\infty
\end{align}to\begin{align}\begin{split}\acute
\nu^k_{k,k+1}(u)={}&\frac{1}{2^{k}}\int_{0}^\infty I_1(\sqrt{u}t)K_0(t)t^{k}\D t\\&+\int_{0}^\infty I_1(\sqrt{u}t)K_0(t)\left\{ [I_0(t)K_0(t)]^k-\frac{1}{(2t)^{k}} \right\}t^{2k}\D t,\end{split}
\end{align} we have\begin{align}\lim_{u\to1^-}
(1-u)^k\acute\nu^k_{k,k+1}(u)=\lim_{u\to1^-}\frac{(1-u)^k}{2^{k}}\int_{0}^\infty I_1(\sqrt{u}t)K_0(t)t^{k}\D t=\frac{(k-1)!}{2}
\end{align} according to \eqref{eq:I1K0}.

As we perform cofactor expansion  with respect to the matrix element $\lim_{u\to1^-}
(1-u)^k\acute\nu^k_{k,k+1}(u)$, manipulate columns according to \begin{align}
\left\{ \begin{array}{l}
\nu^\ell_{k,j}(1)=\nu^\ell_{k,k+j}(1)=\nu^\ell_{k,j}, \\\acute
\nu^\ell_{k,k+j}(1)-\acute \nu^\ell_{k,j}(1)=-\nu_{k-1,j-1}^\ell
\end{array} \right.
\end{align}for all $  j\in\mathbb Z\cap[2,k]$, and permute rows for a total of $\sum_{j=1}^k[(k+j)-2j]=\frac{k(k-1)}{2} $ times (according to bubble sort), we can identify $2^{(2k-1)k}\lim_{u\to1^-}(1-u)^k\omega_{2k}(u) $    with\begin{align}\begin{split}&
(-1)^{k+1+\frac{k(k-1)}{2}}\frac{(k-1)!}{2}\det\left( \begin{array}{c|c}\begin{array}{ccc}
 &  &  \\
 & \raisebox{-0.25\height}{\resizebox{1.75\width}{1.75\height}{$\mathbf N_k^{\mathrm T}$}} &  \\
 &  &  \\
\end{array}&\raisebox{-0.25\height}{\resizebox{1.75\width}{1.75\height}{$\mathbf O_{}$}}\\\hline\begin{array}{ccc}\vspace{-0.75em}\\\acute
\nu_{k,1}^1(1) & \cdots&\acute\nu^1_{k,k}(1) \\
\multicolumn{3}{c}{\cdots\cdots\cdots\cdots\cdots\cdots\cdots}   \\
\acute
\nu_{k,1}^{k-1}(1) & \cdots&\acute\nu^{k-1}_{k,k}(1) \\
\end{array}&\raisebox{-0.25\height}{\resizebox{1.75\width}{1.75\height}{$-\mathbf N_{k-1}^{\mathrm T}$}}\end{array} \right)\\={}&(-1)^{\frac{k(k-1)}{2}}\frac{(k-1)!}{2}\det \mathbf N_{k-1}\det\mathbf N_{k},\end{split}
\end{align}  as expected. \end{proof}

\begin{proposition}[Factorization of $ \omega_{2k}(0^{+})$]The limit\begin{align}
\lim_{u\to0^+}u^{k^2}\omega_{2k}(u)=(-1)^{\frac{k(k-1)}{2}}\frac{(2k+1)(\det \mathbf M_k)^{2}}{2^{(2k-1)k+1}(k+1)}
\end{align} entails\begin{align}
\omega_{2k}(u)=(-1)^{\frac{k(k-1)}{2}}\frac{(2k+1)(\det \mathbf M_k)^{2}}{2^{(2k-1)k+1}u^{k^{2}}(k+1)}\prod _{j=1}^{k+1}\left[\frac{(2j-1)^{2}}{(2j-1)^2-u}\right]^{k},\quad \forall u\in(0,1).\label{eq:omega_2k_u_rational}
\end{align} \end{proposition}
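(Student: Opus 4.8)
The plan is to integrate the Vanhove differential equation \eqref{eq:W_ODE_Nk_prep0} and then fix the single free constant from the behaviour of $\omega_{2k}(u)$ as $u\to0^+$, following the template of the case $k=2$ in Proposition~\ref{prop:W_N2_0}. Integrating \eqref{eq:W_ODE_Nk_prep0} on $(0,1)$ yields $\omega_{2k}(u)=C\big[u^k\prod_{j=1}^{k+1}((2j-1)^2-u)\big]^{-k}$ for some constant $C$, so that $u^{k^2}\omega_{2k}(u)\to C\prod_{j=1}^{k+1}(2j-1)^{-2k}$ as $u\to0^+$. Substituting $C=\big(\lim_{u\to0^+}u^{k^2}\omega_{2k}(u)\big)\prod_{j=1}^{k+1}(2j-1)^{2k}$ back into the closed form turns it into \eqref{eq:omega_2k_u_rational} verbatim, so the only substantive task is the evaluation of $\lim_{u\to0^+}u^{k^2}\omega_{2k}(u)$.

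For that limit I would start from the algebraic identity \eqref{eq:omega_2k_alg}, which presents $(2\sqrt u)^{(2k-1)k}\omega_{2k}(u)$ as a $2k\times2k$ determinant whose rows are $\nu^1_{k,\bullet}(u),\acute\nu^1_{k,\bullet}(u),\dots,\nu^k_{k,\bullet}(u),\acute\nu^k_{k,\bullet}(u)$ and whose columns are indexed by $j\in\mathbb Z\cap[1,2k]$. Since $k^2-\tfrac{(2k-1)k}{2}=\tfrac k2$, one has $u^{k^2}\omega_{2k}(u)=2^{-(2k-1)k}\det M(u)$, where $M(u)$ is that determinant after a factor of $\sqrt u$ has been absorbed into each of the $k$ rows of type $\acute\nu^\ell_{k,\bullet}$. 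I would then record the $u\to0^+$ asymptotics of each entry of $M(u)$ by dominated convergence, using $I_0(\sqrt u t)\to1$, $\sqrt u\,t\,K_1(\sqrt u t)\to1$ and $\sqrt u\,I_1(\sqrt u t)\to0$ together with the bounds \eqref{eq:IK_sup}, \eqref{eq:Klog_sup}, \eqref{eq:K1_sup} and \eqref{eq:IKk_sup} (the latter needed to peel off the Heaviside-type main part $1/(2t)^k$ from $[I_0(t)K_0(t)]^k$ before integrating, just as in \eqref{eq:K-1}). Splitting the columns into $B:=\mathbb Z\cap[2,k+1]$ and $A:=\{1\}\cup(\mathbb Z\cap[k+2,2k])$, the upshot is: on $B$-columns the $\nu$-rows converge to $\mu^\ell_{k,j-1}(1)=(\mathbf M_k)_{j-1,\ell}$ while the rescaled $\acute\nu$-rows are $O(u)$; on $A$-columns the $\nu$-rows are only $O(\log u)$, whereas $\sqrt u\,\acute\nu^\ell_{k,1}(u)\to-\tfrac{2k+1}{2(k+1)}\mu^\ell_{k,1}(1)$ and $\sqrt u\,\acute\nu^\ell_{k,k+m}(u)\to-\mu^\ell_{k,m}(1)$ for $m\in\mathbb Z\cap[2,k]$ --- the coefficient $\tfrac{2k+1}{2(k+1)}$ being exactly what motivates the special linear combination in the definition of $\nu^\ell_{k,1}$.

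Expanding $\det M(u)$ over permutations, a count of rows against columns shows that any monomial pairing $p$ of the $\nu$-rows with $A$-columns must pair exactly $p$ of the $\acute\nu$-rows with $B$-columns, so it is $O\big((u\log u)^p\big)=o(1)$ unless $p=0$; the $p=0$ terms assemble into a block anti-diagonal (generalized Laplace) expansion,
\begin{align*}
\lim_{u\to0^+}2^{(2k-1)k}u^{k^2}\omega_{2k}(u)={}&\varepsilon_k\,\det\big((\mathbf M_k)_{j-1,\ell}\big)_{j\in B,\,1\leq\ell\leq k}\\
&{}\times\det\big(-c_j(\mathbf M_k)_{\pi(j),\ell}\big)_{j\in A,\,1\leq\ell\leq k},
\end{align*}
where $\pi(1)=1$, $\pi(k+m)=m$, $c_1=\tfrac{2k+1}{2(k+1)}$, $c_{k+m}=1$, and $\varepsilon_k=\pm1$ is the signature of the coarse permutation interleaving the two row blocks. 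The first determinant equals $\det\mathbf M_k$; the second equals $(-1)^k\tfrac{2k+1}{2(k+1)}\det\mathbf M_k$ once the scalars $-c_j$ are factored out of the $k$ columns. Collecting powers of $2$ and signs --- one checks $\varepsilon_k=(-1)^{k(k+1)/2}$, so that $\varepsilon_k(-1)^k=(-1)^{k(k-1)/2}$, in agreement with Proposition~\ref{prop:W_N2_0} when $k=2$ --- gives $\lim_{u\to0^+}u^{k^2}\omega_{2k}(u)=(-1)^{k(k-1)/2}\tfrac{(2k+1)(\det\mathbf M_k)^2}{2^{(2k-1)k+1}(k+1)}$, and \eqref{eq:omega_2k_u_rational} follows from the first paragraph.

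The main obstacle is the two-part bookkeeping in the last step: verifying that no ``mixed'' permutation survives the limit --- where the elementary fact $u\log u\to0$ does the real work, killing products of $O(\log u)$-entries against $O(u)$-entries --- and pinning down the global sign $\varepsilon_k$ by tracking, through a bubble-sort count, both the interleaving of the $\nu$- and $\acute\nu$-rows and the reordering of the $A$-columns into the natural order $1,2,\dots,k$, so that together with the $(-1)^k$ from the column scalars it produces the asserted $(-1)^{k(k-1)/2}$.
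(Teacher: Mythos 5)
Your proposal is correct and follows essentially the same route as the paper: integrate the first-order equation \eqref{eq:W_ODE_Nk_prep0} to reduce everything to the limit $\lim_{u\to0^+}u^{k^2}\omega_{2k}(u)$, then distribute the power of $u$ over the $\acute\nu$-rows of \eqref{eq:omega_2k_alg}, record the entrywise $u\to0^+$ asymptotics, and extract the two $\det\mathbf M_k$ blocks by a generalized Laplace expansion. In fact you supply details the paper compresses into ``a straightforward generalization of Proposition~\ref{prop:W_N2_0}'' --- notably the $O(u^p\log^p u)$ argument killing mixed permutations, the treatment of the column $j=k+1$ (which the paper's case list omits), and the explicit sign count $\varepsilon_k=(-1)^{k(k+1)/2}$ --- and your limit entries $\mu^\ell_{k,j-1}$, $-\mu^\ell_{k,m}$ carry the correct subscript $k$ (the paper's displayed $\mu^\ell_{k-1,\,\cdot}$ is a typo, as the final $(\det\mathbf M_k)^2$ confirms).
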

\begin{proof}In the formula\begin{align}
2^{(2k-1)k}u^{k^{2}}\omega_{2k}(u)={}&\det\left(\begin{array}{rrr}\nu_{k,1}^1(u) & \cdots\ & \nu_{k,2k-1}^1(u) \\
\sqrt{u}\acute\nu_{k,1}^1(u) & \cdots\ & \sqrt{u}\acute \nu_{k,2k-1}^1(u) \\
\multicolumn{3}{c}{\cdots\cdots\cdots\cdots\cdots\cdots\cdots\cdots\cdots}    \\
\nu_{k,1}^k(u) & \cdots\ & \nu_{k,2k-1}^k(u)  \\
\sqrt{u}\acute
\nu_{k,1}^k(u) & \cdots\ &\sqrt{u}\acute \nu_{k,2k-1}^k(u)\\
\end{array}\right),
\end{align}we observe that\begin{align}\nu_{k,j}^\ell(u)=
\begin{cases}O(\log u), & j\in\{1\}\cup(\mathbb Z\cap[k+2,2k]) \\
\mu^\ell_{k-1,j-1}+O(u), & j\in\mathbb Z\cap[2,k]
\end{cases}\end{align}and \begin{align}
\sqrt{u}\acute \nu_{k,j}^\ell(u)=\begin{cases}-\frac{2k+1}{2(k+1)}\mu^\ell_{k-1,1}+o(1), & j=1 \\
O(u), & j\in\mathbb Z\cap[2,k] \\
-\mu^{\ell}_{k-1,j-k-1}+o(1), & j\in\mathbb Z\cap[k+2,2k] \\
\end{cases}
\end{align}apply to all $ \ell\in\mathbb Z\cap[1,k]$, in the $ u\to0^+$ limit. The factorization procedure is thus a straightforward  generalization of Proposition  \ref{prop:W_N2_0}.  \end{proof}Comparing \eqref{eq:W_detNk_fac} with \eqref{eq:omega_2k_u_rational}, we arrive at \eqref{eq:detN_rec}, thereby completing the proof of Broadhurst--Mellit determinant formulae (Conjectures \ref{conj:BMdetM} and \ref{conj:BMdetN}).

\section{Vacuum diagrams and Mahler measures\label{sec:VacMahler}}

So far, each Wro\'nskian in our derivations  concerns a set of functions  that all reside in the kernel space   $\ker\widetilde L_n $ of a certain  Vanhove operator $ \widetilde L_n$. The proofs of both Conjectures \ref{conj:BMdetM} and \ref{conj:BMdetN} were built on homogeneous evolution equations for the corresponding Wro\'nskian determinants, namely, \eqref{eq:W_ODE_Mk_prep0}  and \eqref{eq:W_ODE_Nk_prep0}. In this section, we will treat a pair of two-scale vacuum diagrams that are not annihilated by Vanhove's operators, along with  the corresponding ``vacuum analogs'' $ \check \varOmega_3(u)$ and $ \check\omega_4(u)$ of the  Wro\'nskian determinants $ \varOmega_3(u)$ and $ \omega_4(u)$ factorized in \S\S\ref{sec:detM2}--\ref{sec:detN2}. The inhomogeneous evolution equations for these new  Wro\'nskians  $ \check \varOmega_3(u)$ and $ \check\omega_4(u)$ eventually enable us to verify Theorem \ref{thm:BM_Mahler}, through factorizations of determinants.
\subsection{Conjectures of Broadhurst--Mellit and Rodr\'iguez-Villegas} For each positive integer $n$, the following integral\begin{align}
V_{n}:=\IKM(0,n;1)=\int_0^\infty[K_0(t)]^n t\D t,
\end{align}is known as  the $(n-1)$-loop vacuum diagram \cite[][(1)]{BBBG2008} in two-dimensional quantum field theory.  An integral representation $K_0(t):=\int_0^\infty e^{-t\cosh u}\D u,t>0$ connects $V_n$ to its avatar in statistical mechanics:\begin{align}
V_n=\int_0^\infty \D x_1\cdots\int_0^\infty \D x_n\frac{1}{(\cosh x_1+\cdots +\cosh x_n)^2},
\end{align} which is called  the $n$th integral of Ising class \cite{BBC2006Ising,BBBC2007Ising}.
It has been shown that \cite{Ouvry2005,BBBG2008} \begin{align}
V_{1}=1,\quad V_2=\frac{1}{2},\quad V_3=\frac{3}{4}\sum_{n=0}^\infty\left[ \frac{1}{(3n+1)^2}-\frac{1}{(3n+2)^2} \right],\quad V_4=\sum_{n=0}^\infty\frac{1}{(2n+1)^3}
\end{align}and \cite[][Theorem 2]{BBC2006Ising}\begin{align} \lim_{n\to\infty}\frac{2^nV_n}{n!}=2e^{-2\gamma},\end{align}where  $ \gamma:=\lim_{n\to\infty}\left(-\log n +\sum_{k=1}^n\frac1k\right)$ is the Euler--Mascheroni constant. The intermediate regime (namely, vacuum diagrams $ V_{n}$ for $n\in\mathbb Z_{>4}$) appears to be an uncharted territory.

In 2013, Broadhurst wrote that ``we know nothing about the number theory of $ V_5$'' \cite[][\S8.6]{Broadhurst2013MZV}, which stood in stark contrast with  other physically relevant Bessel moments $ \IKM(a,b;2k+1)$  involving $ a+b=5$ Bessel factors,   where $ k$ is a non-negative integer.
In particular, conjectures on the closed-form expressions of $ \IKM(1,4;2k+1)$ and $ \IKM(2,3;2k+1)$ for $ k\in\mathbb Z_{\geq0}$ have been supported by numerical experiments \cite{BBBG2008} and confirmed by theoretical analyses \cite{BBBG2008,BlochKerrVanhove2015,Samart2016,Zhou2017WEF}.

Rising to the challenge of understanding $ V_5=\IKM(0,5;1)$ and $ V_6=\IKM(0,6;1)$ arithmetically, Broadhurst and Mellit \cite{BroadhurstMellit2016,Broadhurst2016} have proposed a possible link between Bessel moments and  special $L$-values attached to  two special modular forms\begin{align}
f_{3,15}(z)={}&[\eta(3z)\eta(5z)]^3+[\eta(z)\eta(15z)]^3,\\f_{4,6}(z)={}&[\eta(z)\eta(2z)\eta(3z)\eta(6z)]^{2},
\end{align}with $ \eta(z):=e^{\pi iz/12}\prod_{n=1}^\infty(1-e^{2\pi inz})$ being the Dedekind eta function defined for complex numbers $z$ in the upper half-plane $\mathfrak H:=\{w\in\mathbb C| \I w>0\}$.  Here,  $f_{k,N} $ represents a modular form of weight $k$ and level $N$.

We recapitulate their conjectures (see \cite[][(4.3), (5.8)]{BroadhurstMellit2016} or \cite[][(101), (114)]{Broadhurst2016})
below.
\begin{conjecture}[Broadhurst--Mellit]\label{conj:L5(4)L6(5)BM}We have the following evaluation of two $ 2\times 2$ determinants filled with Bessel moments:\begin{align}\det \check {\mathbf M}_2:=\det
\begin{pmatrix}\IKM(0,5;1) & \IKM(0,5;3) \\
\IKM(2,3;1) & \IKM(2,3;3) \\
\end{pmatrix}\overset{?}{=}{}&\frac{45}{8\pi^{2}}L(f_{3,15},4),\label{eq:L5(4)}\\\det \check {\mathbf N}_2:=\det
\begin{pmatrix}\IKM(0,6;1) & \IKM(0,6;3) \\
\IKM(2,4;1) & \IKM(2,4;3) \\
\end{pmatrix}\overset{?}{=}{}&\frac{27}{4\pi^2}L(f_{4,6},5),\label{eq:L6(5)}
\end{align}where \begin{align}
L(f_{k,N},s):=\frac{(2\pi )^{s}}{\Gamma(s)}\int_0^{\infty} f_{k,N}(iy)y^{s-1}\D y.
\end{align}\end{conjecture}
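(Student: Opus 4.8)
The plan is to reduce Conjecture \ref{conj:L5(4)L6(5)BM} to Theorem \ref{thm:BM_Mahler} together with a pair of Mahler-measure/$L$-value identities. Since Theorem \ref{thm:BM_Mahler} already evaluates both determinants in closed form as logarithmic Mahler measures, I would first substitute those evaluations into \eqref{eq:L5(4)} and \eqref{eq:L6(5)} and cancel the common determinant. This turns each conjectural evaluation into an assertion about a single Mahler measure: comparing Theorem \ref{thm:BM_Mahler} with \eqref{eq:L5(4)} shows that \eqref{eq:L5(4)} holds if and only if
\[
\frac{2\pi^3}{15\sqrt{15}}\,m(1+x_1+x_2+x_3+x_4)=\frac{45}{8\pi^2}L(f_{3,15},4),
\]
and comparing with \eqref{eq:L6(5)} shows that \eqref{eq:L6(5)} holds if and only if
\[
\frac{\pi^4}{96}\,m(1+x_1+x_2+x_3+x_4+x_5)=\frac{27}{4\pi^2}L(f_{4,6},5).
\]
After clearing the elementary constants, these are precisely two instances of the Rodr\'iguez-Villegas predictions that the Mahler measure of a sum of variables equals a rational multiple, times a power of $\pi$, of a non-critical special value of the $L$-function of an associated modular form.

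The second step is to attack these two Mahler-measure/$L$-value identities directly. For each linear form $1+x_1+\cdots+x_n$ I would first rewrite its logarithmic Mahler measure as the expected logarithmic distance of a uniform planar random walk: appending a free phase $e^{2\pi i t_0}$ leaves $m(\cdot)$ unchanged and yields $m(1+x_1+\cdots+x_n)=\int_0^\infty(\log x)\,p_{n+1}(x)\,\D x$, where $p_{n+1}$ is Kluyver's density from the final remark of \S\ref{sec:detMkNk}. Because $p_{n+1}$ is holonomic with the same singular locus as Vanhove's operator $\widetilde L_n$ of Lemma \ref{lm:VanhoveLn}, this presents $m(1+x_1+\cdots+x_n)$ as a Bessel moment whose Picard--Fuchs structure singles out the modular forms $f_{3,15}$ (weight $3$, level $15$) for $n=4$ and $f_{4,6}$ (weight $4$, level $6$) for $n=5$; the fact that $s=4$ and $s=5$ fall outside the critical strips of these weight-$3$ and weight-$4$ forms confirms that the correct tool is a regulator rather than a period. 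I would then realize $m(1+x_1+\cdots+x_n)$ as a Beilinson regulator integral on the motive attached to the singular fibre of this family, and match that regulator to $L(f_{3,15},4)$ (respectively $L(f_{4,6},5)$) through Beilinson's conjecture.

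The hard part will be precisely this last matching. The Wro\'nskian-factorization machinery of the present paper controls the \emph{determinant} completely---that is the content of Theorem \ref{thm:BM_Mahler}, which I am free to assume---and thereby collapses the whole problem onto a single transcendental constant in each case; but it does not by itself evaluate that constant arithmetically. The Rodr\'iguez-Villegas identities expressing $m(1+x_1+x_2+x_3+x_4)$ and $m(1+x_1+x_2+x_3+x_4+x_5)$ as special $L$-values remain open, so the proposed argument establishes Conjecture \ref{conj:L5(4)L6(5)BM} conditionally on them, and the genuine obstacle is the number-theoretic regulator computation, not the determinantal algebra. Accordingly, the realistic target of this proposal is to prove rigorously that the two Broadhurst--Mellit $L$-value conjectures are \emph{equivalent} to the two Rodr\'iguez-Villegas Mahler-measure conjectures, reducing a numerically observed pair of determinant evaluations to a clean arithmetic statement about two classical Mahler measures.
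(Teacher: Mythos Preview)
Your assessment is accurate and matches the paper's own treatment: the statement is explicitly left open in the paper, which states that neither Conjecture~\ref{conj:L5(4)L6(5)BM} nor Conjecture~\ref{conj:L5(4)L6(5)Mahler} yields to its methods, and that its ``modest goal'' is precisely the equivalence you identify as the ``realistic target.'' Your first step---substituting Theorem~\ref{thm:BM_Mahler} into \eqref{eq:L5(4)}--\eqref{eq:L6(5)} and reducing to the two Rodr\'iguez-Villegas identities---is exactly how the paper frames the relationship, and the constants you obtain match those in Conjecture~\ref{conj:L5(4)L6(5)Mahler}.

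Your second step, however, is not something the paper attempts, and you should be clear that it is a research programme rather than a proof. The Beilinson-regulator approach you sketch is indeed the conjectural mechanism behind the Rodr\'iguez-Villegas predictions, but carrying it out for these two specific Mahler measures is a well-known open problem (the paper cites Straub--Zudilin to that effect). In particular, ``matching the regulator to $L(f_{3,15},4)$'' presupposes the relevant case of Beilinson's conjecture for a non-critical value, which is not available. So your proposal correctly establishes the equivalence of Conjectures~\ref{conj:L5(4)L6(5)BM} and~\ref{conj:L5(4)L6(5)Mahler}---which is all the paper claims---but does not, and cannot with present tools, prove Conjecture~\ref{conj:L5(4)L6(5)BM} itself.
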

In his seminal work   \cite[][\S7.4]{Broadhurst2016}, Broadhurst has observed    intricate connections between vacuum diagrams and  logarithmic Mahler measures $ m(P)$ of Laurent polynomials $ P\in\mathbb C[x_1^{\pm1},\dots, x_n^{\pm1}]$ [cf.~\eqref{eq:defn_Mahler_m}].
Proven results in vacuum diagrams \cite{Ouvry2005,BBBG2008} and Mahler measures \cite{Boyd1981b} bring us the following identities \cite[][(118) and (119)]{Broadhurst2016}:\begin{align}
V_3=\frac{\pi}{\sqrt{3}}m(1+x_1+x_2),\quad V_4=\frac{\pi^{2}}{4}m(1+x_{1}+x_{2}+x_{3}).\label{eq:V3V4}
\end{align} Intriguingly, the special values $ L(f_{3,15},4)$ and $L(f_{4,6},5) $ defined in \eqref{eq:L5(4)} and \eqref{eq:L6(5)} also show up in the conjectural evaluations of two logarithmic Mahler measures, due to Fernando Rodr\'{\i}guez-Villegas (see  \cite[][\S8]{BoydLindRVDeninger2003}, \cite[][(6.11), (6.12)]{BSWZ2012} and \cite[][(120), (121)]{Broadhurst2016}).
\begin{conjecture}[Rodr\'{\i}guez-Villegas]\label{conj:L5(4)L6(5)Mahler}We have\begin{align}
m(1+x_{1}+x_{2}+x_{3}+x_{4})\overset{?}{=}{}&6\left( \frac{\sqrt{15}}{2\pi} \right)^{5}L(f_{3,15},4),\\m(1+x_{1}+x_{2}+x_{3}+x_{4}+x_{5})\overset{?}={}&3\left( \frac{\sqrt{6}}{\pi} \right)^6L(f_{4,6},5).
\end{align}\end{conjecture}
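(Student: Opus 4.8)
The plan is to repeat the Wro\'nskian constructions of \S\S\ref{sec:detM2}--\ref{sec:detN2} with one entry replaced by a bona fide two-scale vacuum diagram lying outside the kernel of the relevant Vanhove operator, so that the Wro\'nskian obeys an \emph{inhomogeneous} first-order differential equation whose particular solution is a logarithmic Mahler measure. Put $\check\mu^\ell_{2,1}(u):=\IKvM(0,5;2\ell-1|u)=\int_0^\infty K_0(\sqrt u t)[K_0(t)]^4t^{2\ell-1}\D t$ and $\check\nu^\ell_{2,1}(u):=\IKvM(0,6;2\ell-1|u)$, which equal $\IKM(0,5;2\ell-1)$ and $\IKM(0,6;2\ell-1)$ at $u=1$, and form $\check\varOmega_3(u):=W[\check\mu^1_{2,1}(u),\mu^1_{2,2}(u),\mu^1_{2,3}(u)]$ and $\check\omega_4(u):=W[\check\nu^1_{2,1}(u),\nu^1_{2,2}(u),\nu^1_{2,3}(u),\nu^1_{2,4}(u)]$. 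Since $[K_0(t)]^4$ and $[K_0(t)]^5$ are still annihilated by the Borwein--Salvy operators, the row operations of \eqref{eq:Omega3u_alt} apply to $\check\mu^\ell_{2,1},\check\nu^\ell_{2,1}$ unchanged, and the column-elimination trick of Proposition \ref{prop:W_M2_1} (resp.~Proposition \ref{prop:W_N2_1})---using $I_0K_1+I_1K_0=1/t$---gives $\check\varOmega_3(1)=2^{-3}\IKM(1,2;1)\det\check{\mathbf M}_2$ and $\lim_{u\to1^-}(1-u)^2\check\omega_4(u)=-2^{-7}\IKM(1,3;1)\det\check{\mathbf N}_2$. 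Comparing with Propositions \ref{prop:W_M2_1}--\ref{prop:W_M2_0} reduces the theorem to evaluating the ratios $R_3(u):=\check\varOmega_3(u)/\varOmega_3(u)$ at $u=1$ (whence $\det\check{\mathbf M}_2=(\det\mathbf M_2)R_3(1)$) and its $\check\omega_4/\omega_4$-counterpart.

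Next I would compute the inhomogeneity $\varrho_3(u):=\widetilde L_3\IKvM(0,5;1|u)$ (and likewise $\varrho_4$) by the integration-by-parts device of Lemma \ref{lm:VanhoveLn}(a), starting from $0=\int_0^\infty t^{-1}K_0(\sqrt u t)L_{5}\{[K_0(t)]^4\}\D t$ and transporting $L_5$ across through its formal adjoint. The wrinkle compared with every case treated in Lemma \ref{lm:VanhoveLn} is that the descent formula \eqref{eq:Kn_descent} now produces pure powers $[K_0(t)]^{4-k}\sim(\log t)^{4-k}$, so the $t\to0^+$ boundary terms diverge and must be combined with compensating divergences of the remaining integrals, the finite residue acquiring a $\log u$ from $K_0(\sqrt u\epsilon)\sim-\tfrac12\log u-\log\epsilon$. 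The row reduction of Lemma \ref{lm:W_ODE_M2} then yields, for $u\in(0,4)$,
\begin{align}
D^1\check\varOmega_3(u)=\frac{3\check\varOmega_3(u)}{2}\,D^1\log\frac{1}{u^2(4-u)(16-u)}+\frac{\varrho_3(u)\,W[\mu^1_{2,2}(u),\mu^1_{2,3}(u)]}{u^2(u-4)(u-16)},
\end{align}
and an identical recipe (with source $W[\nu^1_{2,2},\nu^1_{2,3},\nu^1_{2,4}]$ and polynomial $u^2(1-u)(9-u)(25-u)$) gives the $\check\omega_4$-equation. Since the homogeneous parts are those of Lemma \ref{lm:W_ODE_M2} and its counterpart, solved rationally by $\varOmega_3$ and $\omega_4$ via \eqref{eq:W_detM2_eval}, the ratio $R_3$ satisfies the \emph{pure quadrature} $D^1R_3(u)=\varrho_3(u)W[\mu^1_{2,2}(u),\mu^1_{2,3}(u)]\big/\big(u^2(u-4)(u-16)\varOmega_3(u)\big)$, and similarly for $\check\omega_4/\omega_4$. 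One boundary datum is needed in each case and comes from the $u\to0^+$ expansion of the prefactored determinant, along the lines of Propositions \ref{prop:W_M2_0} and \ref{prop:W_N2_0}: for $2^3u^3\check\varOmega_3(u)$, column $1$ reads $-\tfrac12V_4\log u+C_0+o(1)$, $-V_4+o(1)$, $o(1)$ with $C_0$ a non-elementary constant, the $(3,3)$-entry is $\tfrac14+O(\sqrt u)$, and $C_0$ appears only against an $O(u)$ cofactor and \emph{drops out}, leaving $\lim_{u\to0^+}u^3\check\varOmega_3(u)=2^{-5}\IKM(1,3;1)V_4$, hence $R_3(0^+)=\tfrac{20}{\pi^2}V_4$; for $\check\omega_4$ the analogous $4\times4$ computation block-factors as in Proposition \ref{prop:W_N2_0}, producing $\det\mathbf M_2\,\det\check{\mathbf M}_2$ in place of $(\det\mathbf M_2)^2$, so that $R_4(0^+)=\tfrac{6}{5}\det\check{\mathbf M}_2/\det\mathbf M_2$ --- the recursion feeds on the $\check\varOmega_3$-result and the constants $V_5,V_6$ are never needed in closed form.

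It remains to carry out the quadratures, and here the Mahler measure enters. The key fact is that, under the substitution $u\leftrightarrow r^2$, the Vanhove operator $\widetilde L_3$ (resp.~$\widetilde L_4$) is the annihilator of Kluyver's planar-walk density $p_4(r)$ (resp.~$p_5(r)$) --- the ``not accidental'' resemblance of \eqref{eq:m_n_poly} and \eqref{eq:g_poly} recorded after Lemma \ref{lm:VanhoveLn}, made precise in \cite{Zhou2017PlanarWalks}. I would introduce the one-parameter Mahler measure $\phi(u):=m(1+x_1+x_2+x_3+\sqrt u\,x_4)$; Jensen's formula in the last angular variable (\emph{cf.}~\eqref{eq:defn_Mahler_m}) gives $\phi(u)=\int_{[0,1]^3}\log\max\!\big(\,|1+e^{2\pi i\theta_1}+e^{2\pi i\theta_2}+e^{2\pi i\theta_3}|,\sqrt u\,\big)\D\theta_1\D\theta_2\D\theta_3$, whence $D^1\phi(u)=\tfrac{1}{2u}\int_0^{\sqrt u}p_4(r)\D r=\tfrac{1}{2\sqrt u}\int_0^\infty[J_0(s)]^4J_1(\sqrt u\,s)\D s$, so that $\phi(0^+)=m(1+x_1+x_2+x_3)=\tfrac{4}{\pi^2}V_4$ by \eqref{eq:V3V4} and $\phi(1)=m(1+x_1+x_2+x_3+x_4)$. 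Using the operator identification together with the transfer between $[J_0]^n$-integrals and two-scale modified-Bessel moments of \cite{Zhou2017PlanarWalks}, one shows that $D^1(5\phi)$ coincides with the quadrature integrand for $R_3$; since $5\phi(0^+)=\tfrac{20}{\pi^2}V_4=R_3(0^+)$, the first-order equation forces $R_3(u)=5\,m(1+x_1+x_2+x_3+\sqrt u\,x_4)$ on $(0,4)$, and at $u=1$ this gives $\det\check{\mathbf M}_2=5(\det\mathbf M_2)\,m(1+x_1+x_2+x_3+x_4)=\tfrac{2\pi^3}{15\sqrt{15}}m(1+x_1+x_2+x_3+x_4)$. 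Running the same argument with $\widetilde L_4$, $p_5(r)$, $\psi(u):=m(1+x_1+x_2+x_3+x_4+\sqrt u\,x_5)$ and $\det\mathbf N_2=\tfrac{\pi^4}{2^63^2}$ --- where now $\psi(0^+)=m(1+x_1+\cdots+x_4)=\tfrac15\det\check{\mathbf M}_2/\det\mathbf M_2$ matches $\tfrac16R_4(0^+)$ --- yields $\det\check{\mathbf N}_2=6(\det\mathbf N_2)\,m(1+x_1+x_2+x_3+x_4+x_5)=\tfrac{\pi^4}{96}m(1+x_1+x_2+x_3+x_4+x_5)$.

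The main obstacle is precisely this last step: the matching of each Wro\'nskian quadrature with the deformed Mahler measure $\phi(u)$ (resp.~$\psi(u)$). The homogeneous factor is harmlessly rational, but $\varrho_3,\varrho_4$ are transcendental (carrying a $\log u$ together with genuinely non-elementary $u$-integrals) and $W[\mu^1_{2,2},\mu^1_{2,3}]$ is a non-elementary period, so the quadrature is not an elementary function, and one must prove \emph{on the nose}---through the Vanhove--Kluyver operator identification---that it reassembles into $m(1+x_1+\cdots+x_4)$ (resp.~$m(1+x_1+\cdots+x_5)$) rather than into some unrelated period attached to the same differential system. A secondary technical difficulty is the bookkeeping of the divergent $t\to0^+$ boundary contributions in the computation of $\varrho_3,\varrho_4$, for which---unlike in Lemma \ref{lm:VanhoveLn}---the small-$t$ logarithms of the pure $K_0$-powers no longer cancel termwise and must be cancelled against the companion integrals before the limit is taken.
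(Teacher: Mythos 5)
There is a fundamental mismatch between what you prove and what the statement asserts. The statement is Conjecture \ref{conj:L5(4)L6(5)Mahler}, which equates $m(1+x_1+\cdots+x_4)$ and $m(1+x_1+\cdots+x_5)$ with the critical $L$-values $L(f_{3,15},4)$ and $L(f_{4,6},5)$ of two modular forms. Your entire argument never mentions these $L$-values: what you construct (correctly, and essentially along the same lines as \S\ref{sec:VacMahler} of the paper) is the pair of identities $\det\check{\mathbf M}_2=\frac{2\pi^3}{15\sqrt{15}}m(1+x_1+\cdots+x_4)$ and $\det\check{\mathbf N}_2=\frac{\pi^4}{96}m(1+x_1+\cdots+x_5)$, i.e.\ Theorem \ref{thm:BM_Mahler}. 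That theorem only demonstrates the \emph{equivalence} of Conjecture \ref{conj:L5(4)L6(5)Mahler} with the Broadhurst--Mellit Conjecture \ref{conj:L5(4)L6(5)BM}; to conclude the Rodr\'{\i}guez-Villegas identities one would still need to prove that $\det\check{\mathbf M}_2=\frac{45}{8\pi^2}L(f_{3,15},4)$ and $\det\check{\mathbf N}_2=\frac{27}{4\pi^2}L(f_{4,6},5)$, which is open and which the paper explicitly states does not yield to these Wro\'nskian methods. So even if every step of your sketch were filled in, the target statement would remain unproven.

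Within the part you do address, one step is wrong as stated, though the endpoint value survives. You claim that $D^1(5\phi)$ coincides with the quadrature integrand for $R_3:=\check\varOmega_3/\varOmega_3$, hence $R_3(u)=5\,m(1+x_1+x_2+x_3+\sqrt{u}\,x_4)$ on all of $(0,4)$. In fact $D^1R_3(u)$ is proportional to $\log u\cdot p_4(\sqrt u)/\sqrt u$ (the inhomogeneous source), whereas $5D^1\phi(u)=\frac{5}{2u}\int_0^{\sqrt u}p_4(r)\,\D r$; these differ (e.g.\ $D^1R_3(1)=0$ while $5D^1\phi(1)>0$). Integrating the source by parts, as in \eqref{eq:check_C3}, produces an extra term $-\tfrac{5}{2}\log u\int_0^{\sqrt u}p_4(r)\,\D r$ in $R_3(u)-5\phi(u)$, which happens to vanish at $u=1$ and as $u\to0^+$ but not in between; this is precisely why the paper works with the explicit antiderivative and only invokes Broadhurst's formula \eqref{eq:BroadhurstMahler} at $u=1$, rather than matching derivatives globally. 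Your boundary data $R_3(0^+)=\frac{20}{\pi^2}V_4$ and the block factorizations at $u=1$ and $u=0^+$ agree with the paper's Propositions in \S\ref{subsec:red_V5}, so the Theorem-\ref{thm:BM_Mahler} portion of your argument is repairable; the missing link to the $L$-values is not.
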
  It appears that neither Conjecture \ref{conj:L5(4)L6(5)BM} nor \ref{conj:L5(4)L6(5)Mahler} would yield to the algebraic methods developed in this paper. In a  recent review  \cite{StraubZudilin2018}, Straub and Zudilin have stated that Conjecture  \ref{conj:L5(4)L6(5)Mahler} remains unproven, as of January 2018. Nevertheless, we can still achieve a modest goal of demonstrating the equivalence between  Conjectures \ref{conj:L5(4)L6(5)BM} and \ref{conj:L5(4)L6(5)Mahler}, as stated in Theorem \ref{thm:BM_Mahler}.

As we will witness in the rest of \S\ref{sec:VacMahler}, the bridge that connects Bessel moments to Mahler measures is   Broadhurst's key formula    (see \cite[][(9)]{Broadhurst2009}, \cite[][last formula on p.~978 and penultimate formula on p.~981]{BSWZ2012}, as well as  \cite[][(122)]{Broadhurst2016}):
\begin{align}
m(1+x_{1}+\cdots+x_{n-1})=-\gamma+\log 2-n\int_0^\infty J_{1}(t)[J_0(t)]^{n-1}\log t\D t,\label{eq:BroadhurstMahler}
\end{align}which is provable by differentiating the ``ramble integral'' (see   \cite[][\S6]{BSWZ2012} and \cite[][(2--2)]{BSW2013})\begin{align}W_n(s):={}&
\int_0^1\D t_1\cdots\int_0^1\D t_n\left\vert \sum_{k=1}^n e^{2\pi i t_k} \right\vert^s\notag\\={}&-2^{s}\frac{\Gamma\left( 1+\frac{s}{2} \right)}{\Gamma\left( 1-\frac{s}{2} \right)}\int_0^\infty x^{-s}\frac{\D}{\D x}[J_0(x)]^n\D x,\quad \forall s\in(-n/2,2)\label{eq:zeta_Mahler}
\end{align}at $ s=0^{}$.
Here, we remind our readers that  $ J_0(x):=\frac{2}{\pi}\int_0^{\pi/2}\cos(x\cos\varphi)\D\varphi$ is the Bessel function of the first kind and\ zeroth order, whose derivative gives $ \D J_0(x)/\D x=-J_1(x)$.

     \subsection{Relation between $ \det \check {\mathbf M}_2$ and $ m(1+x_1+x_2+x_3+x_4)$\label{subsec:red_V5}} If we assign a different parameter to one of the internal lines in the diagram $V_{5}$, then we obtain a family of two-scale vacuum diagrams\begin{align}
\int_{0}^{\infty}K_0(\sqrt{u}t)[K_0(t)]^4t\D t
\end{align} parametrized by $ u>0$.
To study this family of two-scale diagrams, we need a modest extension to Lemma \ref{lm:VanhoveLn}, as given below.
\begin{proposition}[Differential equation for  two-scale 4-loop vacuums]\label{lm:Vanhove_L3_ODE}We have \begin{align}\widetilde
L_3\IKvM(0,5;1|u)=\frac{3}{2}\log u, \quad \forall u\in(0,\infty),\label{eq:PF_Vv5_u_all}
\end{align}where $\widetilde {L}_3$ is the third-order Vanhove operator defined in \eqref{eq:VL3}. \end{proposition}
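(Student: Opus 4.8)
The strategy is to run, almost verbatim, the Newton--Leibniz bookkeeping that underlies part~(a) of Lemma~\ref{lm:VanhoveLn}, but with the outer factor $I_{0}(\sqrt{u}t)$ replaced throughout by $K_{0}(\sqrt{u}t)$. Indeed $\IKvM(0,5;1|u)=\int_{0}^{\infty}K_{0}(\sqrt{u}t)[K_{0}(t)]^{4}t\,\D t$ differs from the ``sunrise'' integral $\IvKM(1,4;1|u)=\int_{0}^{\infty}I_{0}(\sqrt{u}t)[K_{0}(t)]^{4}t\,\D t$ only in that single Bessel factor, the ``seed'' $[K_{0}(t)]^{4}$ and the Borwein--Salvy operator $L_{5}$ being the same for both; and we already know from the remark following Lemma~\ref{lm:W_ODE_M2} that $\widetilde L_{3}\IvKM(1,4;1|u)=-3$. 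So the whole task is to see how the inhomogeneity changes when $I_{0}(\sqrt{u}t)$, which tends to $1$ as $t\to0^{+}$, is swapped for $K_{0}(\sqrt{u}t)$, which diverges logarithmically there.

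Concretely, the pointwise adjunction $t\widetilde L_{3}K_{0}(\sqrt{u}t)=-\tfrac{1}{8}L_{5}^{*}\bigl(K_{0}(\sqrt{u}t)/t\bigr)$ --- proved, for $K_{0}$ as well as for $I_{0}$, inside the proof of Lemma~\ref{lm:VanhoveLn} --- lets us differentiate under the integral sign to get $\widetilde L_{3}\IKvM(0,5;1|u)=-\tfrac{1}{8}\int_{0}^{\infty}\bigl(L_{5}^{*}(K_{0}(\sqrt{u}t)/t)\bigr)[K_{0}(t)]^{4}\,\D t$. Since $[K_{0}(t)]^{4}\in\ker L_{5}$, Lagrange's identity applied on $[\epsilon,\infty)$ collapses this integral to a sign times the bilinear concomitant of $L_{5}$ at $t=\epsilon$; one then lets $\epsilon\to0^{+}$ and reads off the limit term by term, using the Bronstein--Mulders--Weil coefficients of $L_{5}$ from \eqref{eq:BMW}, the descent formulae \eqref{eq:Kn_descent} in the shape $\mathscr L_{5,k}\{[K_{0}(t)]^{4}\}=\tfrac{4!}{(4-k)!}[K_{0}(t)]^{4-k}[-tK_{1}(t)]^{k}$ (so $\lim_{t\to0^{+}}\mathscr L_{5,4}\{[K_{0}(t)]^{4}\}=4!=24$, while $\mathscr L_{5,k}\{[K_{0}(t)]^{4}\}$ blows up like a power of $\log t$ for $k<4$), and the small-argument expansion $K_{0}(\sqrt{u}t)=-\tfrac12\log u-\log\tfrac{t}{2}-\gamma+O(t^{2}\log t)$ together with the analogous expansions of its $t$-derivatives.

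The mechanism producing $\tfrac{3}{2}\log u$ is this: because $K_{0}(\sqrt{u}t)$ and its $t$-derivatives are singular at $t=0^{+}$, several summands of the concomitant carry $\log\epsilon$-divergences; these must cancel in the sum --- which is forced, since $\IKvM(0,5;1|u)$ converges, hence is finite, for every $u>0$ --- and the only $u$-dependent contribution that survives is the additive constant $-\tfrac12\log u$ of $K_{0}(\sqrt{u}t)$ against the boundary value $\lim_{t\to0^{+}}\mathscr L_{5,4}\{[K_{0}(t)]^{4}\}=24$, the accompanying $u$-free constants cancelling as well. Collecting this surviving contribution yields $\widetilde L_{3}\IKvM(0,5;1|u)=\tfrac{1}{8}\cdot(-\tfrac12\log u)\cdot(-24)=\tfrac{3}{2}\log u$ for all $u>0$, which is \eqref{eq:PF_Vv5_u_all}. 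As a sanity check on the coefficient, for $u\in(0,16)$ one may write $K_{0}(\sqrt{u}t)=-\tfrac12(\log u)I_{0}(\sqrt{u}t)+\Lambda_{0}(u,t)$ with $\Lambda_{0}(u,t):=K_{0}(\sqrt{u}t)+\tfrac12(\log u)I_{0}(\sqrt{u}t)$; by the standard power-series expansion of $K_{0}$ the remainder $\Lambda_{0}$ contains no $\log u$ and is non-singular in $u$ at $t=0$, so $\IKvM(0,5;1|u)=-\tfrac12(\log u)\IvKM(1,4;1|u)+\int_{0}^{\infty}\Lambda_{0}(u,t)[K_{0}(t)]^{4}t\,\D t$, whence the coefficient of $\log u$ in $\widetilde L_{3}\IKvM(0,5;1|u)$ is $-\tfrac12\cdot\widetilde L_{3}\IvKM(1,4;1|u)=-\tfrac12\cdot(-3)=\tfrac32$, in agreement with the above.

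The main obstacle is precisely the concomitant bookkeeping just described: (i)~isolating the $\log\epsilon$-divergent summands and confirming their cancellation, (ii)~checking that the $u$-independent finite remainder vanishes --- equivalently, that the right-hand side of \eqref{eq:PF_Vv5_u_all} carries no additive constant, the most delicate point, handled by tracking the constants contributed by each $\mathscr L_{5,k}$-block through \eqref{eq:L_descent} (or by matching at one convenient value of $u$) --- and (iii)~pinning the coefficient to $+\tfrac32$ rather than $-\tfrac32$. Conceptually nothing beyond the argument of Lemma~\ref{lm:VanhoveLn} is needed; the extra care is entirely due to the logarithmic singularities at the origin, now present both in the outer factor $K_{0}(\sqrt{u}t)$ and, via \eqref{eq:Kn_descent}, in the lower descent terms $\mathscr L_{5,k}\{[K_{0}(t)]^{4}\}$ for $k<4$.
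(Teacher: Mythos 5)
Your overall strategy---pass to the adjoint $L_5^*$ of the Borwein--Salvy operator, integrate by parts against $[K_0(t)]^4\in\ker L_5$, and read the $\log u$ off the $t\to0^+$ boundary data---is exactly the engine of the paper's proof, and your identification of the coefficient $\tfrac32$ (both via the concomitant and via the decomposition $K_0(\sqrt{u}t)=-\tfrac12(\log u)I_0(\sqrt{u}t)+\Lambda_0(u,t)$) is correct. But the argument as written establishes only $\widetilde L_3\IKvM(0,5;1|u)=\tfrac32\log u+d$ for some constant $d$; the assertion that $d=0$ is precisely the step you flag as ``the most delicate point'' and then do not carry out. Neither of your two suggested remedies is a proof as it stands: ``tracking the constants contributed by each $\mathscr L_{5,k}$-block'' is the full concomitant bookkeeping you have deferred, and ``matching at one convenient value of $u$'' requires actually computing $\widetilde L_3\IKvM(0,5;1|u_0)$ somewhere, which is not obviously easier than the original problem without a further idea.

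The paper closes this gap with two devices you do not use. First, instead of integrating $\frac{K_0(\sqrt{u}t)}{t}L_5\{[K_0(t)]^4\}$ directly (and then having to cancel $\log\epsilon$ divergences), it integrates $\frac{K_0(\sqrt{u}t)-K_0(t)}{t}L_5\{[K_0(t)]^4\}$; since $K_0(\sqrt{u}t)-K_0(t)=-\log\sqrt{u}+O(t^2\log t)$ as $t\to0^+$, the single surviving boundary term is finite and equals $12\log u$ outright, and \eqref{eq:Vv5_int_part} becomes $0=12\log u-2^3\,[\widetilde L_3\IKvM(0,5;1|u)-\widetilde L_3\IKvM(0,5;1|1)]$, i.e.\ $d=\widetilde L_3\IKvM(0,5;1|1)$. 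Second---and this is the substantive computation missing from your proposal---it proves $\widetilde L_3\IKvM(0,5;1|1)=0$ by writing out $\widetilde L_3K_0(\sqrt{u}t)$ pointwise as in \eqref{eq:A4_K0}, specializing to $u=1$, and invoking the integration-by-parts recursion $\int_0^\infty K_1(t)[K_0(t)]^4t^{2n}\D t=\tfrac{2n}{5}\IKM(0,5;2n-1)$ of \eqref{eq:K1KKKK} to make the resulting combination of moments cancel identically, as in \eqref{eq:PF_Vv5_u0}. You would need to supply this anchor computation, or an equivalent evaluation of the finite part of your concomitant, before the argument is complete.
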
\begin{proof}
We first note that \begin{align}\begin{split}
 \widetilde {L}_3K_0(\sqrt{u}t)={}&\frac{ (2u^{2} -25u+32)t^2+2 (u-4) }{2} K_0( \sqrt{u}t)\\{}&-\frac{  [ (u-16) (u-4)t^2+12 (u-6)]\sqrt{u}t }{8} K_1( \sqrt{u}t),\end{split}\label{eq:A4_K0}
\end{align}where $ K_1(x)=-\D K_0(x)/\D x$, which specializes to \begin{align}
\widetilde L_3\IKvM(0,5;1|1)=\frac{3}{8} \int _{0}^\infty  [4 (3 t^2-2) K_0(t)+5 t (4-3 t^2) K_1(t)][K_0(t)]^4t\D t=0.\label{eq:PF_Vv5_u0}
\end{align} Here, we have canceled out  integrals in the last step, thanks to   the following formula  for $ n\in\mathbb Z_{>0}$:  \begin{align}
\int_{0}^{\infty}K_{1}(t)[K_0(t)]^4t^{2n}\D t={}&\frac{2n}{5}\IKM(0,5;2n-1),\label{eq:K1KKKK}
\end{align} which is a consequence of integration by parts.

We have\begin{align}
t\widetilde {L}_3K_0(\sqrt{u}t)=-\frac{1}{2^{3}} L _{5}^*\frac{K_0(\sqrt{u}t)}{t},\label{eq:Vanhove_L_adj}
\end{align}where \begin{align}\begin{split}
L^*_5:={}&-t^5 \frac{\partial^{5} }{\partial t^{5}}-15t^{4} \frac{\partial^{4} }{\partial t^{4}}+5 t^3 (4 t^2-13)\frac{\partial^{3} }{\partial t^{3}}+90 t^2 (2 t^2-1)\frac{\partial^{2} }{\partial t^{2}}\\{}&-t (64 t^4-392 t^2+31)\frac{\partial }{\partial t}-(192 t^4-184 t^2+1).\end{split}
\end{align}
Here, 
 the differential operator $ L _{5}^*$ is (formally) adjoint to the Borwein--Salvy operator  \cite[][Example 4.1]{BorweinSalvy2007}\begin{align}\begin{split}
L_5:={}&\eth^5-20t^2\eth^3-60t^{2}\eth^2+8t^2(8t^2-9)t\eth^1+32t^2(4t^2-1)\eth^0\\{}&\left[\text{where }\eth^{n}:=\left(t\frac{\partial }{\partial t}\right)^n\right],
\end{split}\end{align}an annihilator of every member in the set $ \{[I_0(t)]^j[K_0(t)]^{4-j}|j\in[0,4]\}$.

Using the fact that $ L_5\{[K_0(t)]^4\}=0$,  the recursive construction of $ L_5=\mathscr L_{5,5}$ via the the Bron\-stein--Mulders--Weil algorithm \cite[][Theorem 1]{BMW1997}:\begin{align}
\begin{cases}\mathscr L_{5,0}=\eth^0,\mathscr L_{5,1}=\eth^1, &  \\
\mathscr L_{5,k+1}=\eth^1\mathscr L_{5,k}-k(5-k)t^{2}\mathscr L_{5,k-1}, &          \forall k\in\mathbb Z\cap[1,4], \\
\end{cases}\label{eq:BMW_5}
\end{align}along with the identities $ \mathscr L_{5,k}\{[K_0(t)]^4\}=\frac{4!}{(4-k)!}[K_0(t)]^{4-k}[\eth^1 K_0(t)]^{k} ,\forall k\in\mathbb Z\cap[1,4]$ \cite[][Lemma 3.1]{BorweinSalvy2007},
we can integrate by parts as follows:\begin{align}\begin{split}
0={}&\int_{0}^{\infty}\frac{K_0(\sqrt{u}t)-K_{0}(t)}{t}L_5\{[K_0(t)]^4\}\D t\\={}&\int_{0}^{\infty}[K_0(\sqrt{u}t)-K_{0}(t)]\frac{\partial}{\partial t}\mathscr L_{5,4}\{[K_0(t)]^4\}\D t-4\int_{0}^{\infty}t[K_0(\sqrt{u}t)-K_{0}(t)]\mathscr L_{5,3}\{[K_0(t)]^4\}\D t\\={}&24\log\sqrt{u}-\int_{0}^{\infty}\mathscr L_{5,4}\{[K_0(t)]^4\}\frac{\partial[K_0(\sqrt{u}t)-K_{0}(t)]}{\partial t}\D t-4\int_{0}^{\infty}t[K_0(\sqrt{u}t)-K_{0}(t)]\mathscr L_{5,3}\{[K_0(t)]^4\}\D t\\={}&12\log  u+\int_{0}^{\infty}[K_0(t)]^4L_5^{*}\frac{K_0(\sqrt{u}t)-K_{0}(t)}{t}\D t.\end{split}\label{eq:Vv5_int_part}
\end{align}
Here, in the first step of integration by parts, the boundary contribution arises from the asymptotic behavior $K_0(\sqrt{u}t)-K_{0}(t)=-\log\sqrt{u}+O(t^{2}\log t), t\to0^+$; all the subsequent transfers of derivatives involve no boundary terms at all. Recalling \eqref{eq:PF_Vv5_u0} and \eqref{eq:Vanhove_L_adj}, we see that \eqref{eq:Vv5_int_part} brings us  \eqref{eq:PF_Vv5_u_all}.
   \end{proof}     \begin{remark}As we specialize the relation\begin{align}D^{1}
 \int_{0}^{\infty}[K_0(t)]^4t\widetilde L_3K_0(\sqrt{u}t)\D t=\frac{3}{2}D^{1 }\log u
\end{align}to $u=1$, we obtain\begin{align}
\IKM(0,5;5)=\frac{76}{15}\IKM(0,5;3)-\frac{16}{45}\IKM(0,5;1)+\frac{8}{15},\label{eq:IKM055}
\end{align} a relation that was previously conjectured in \cite[][(120)]{BBBG2008}.      \eor\end{remark}

 We will be interested in a $ 3\times3$ determinant \begin{align} \check \varOmega_3(u):=W[\IKvM(0,5;1|u),\IvKM(2,3;1|u),\IKvM(2,3;1|u)],\end{align} which is a ``vacuum analog'' of another Wro\'nskian studied in  \S\ref{sec:detM2}:\begin{align} \varOmega_3(u):=W\left[\frac{\IvKM(1,4;1|u)+4\IKvM(1,4;1|u)}{5},\IvKM(2,3;1|u),\IKvM(2,3;1|u)\right].\end{align}

\begin{lemma}[Differential equation for $\check \varOmega_3(u)$]For $ u\in(0,4)$, we have \begin{align}\begin{split}
D^{1}\check \varOmega_3(u)={}&\frac{3\check\varOmega_3(u)}{2}D^1\log\frac{1}{u^2 (4-u) ( 16-u)}\\{}&+\frac{3}{2}\frac{\log u}{u^2 (4-u) ( 16-u)}\det\begin{pmatrix}D^0\mu^1_{2,2}(u) & D^0\mu^1_{2,3}(u) \\
D^1\mu^1_{2,2}(u) & D^1\mu^1_{2,3}(u) \\
\end{pmatrix},\end{split}\label{eq:check_Omega3_ODE}
\end{align}where $\mu^1_{2,2}(u)= \IvKM(2,3;1|u)$ and $ \mu^1_{2,3}(u)=\IKvM(2,3;1|u)$.\end{lemma}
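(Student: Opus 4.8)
The plan is to differentiate the Wro\'nskian $\check\varOmega_3(u)=W[\mu_0(u),\mu^1_{2,2}(u),\mu^1_{2,3}(u)]$, where I abbreviate $\mu_0(u):=\IKvM(0,5;1|u)$, and then use the inhomogeneous differential equation $\widetilde L_3\mu_0(u)=\frac32\log u$ from Proposition \ref{lm:Vanhove_L3_ODE} together with the homogeneous equations $\widetilde L_3\mu^1_{2,2}(u)=\widetilde L_3\mu^1_{2,3}(u)=0$ from Lemma \ref{lm:W_ODE_M2}. The mechanism is exactly that of Lemma \ref{lm:W_ODE_M2}: differentiating a $3\times3$ Wro\'nskian row by row collapses to the single determinant in which the last row $D^2$ is replaced by $D^3$, and then $D^3$ of each entry is rewritten via its annihilating operator, leaving a lower-order combination of rows. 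The only new feature is that the first column now carries an inhomogeneous term.

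\emph{First}, I would record the row-by-row Leibniz expansion
\begin{align*}
D^1\check\varOmega_3(u)=\det\begin{pmatrix}D^0\mu_0 & D^0\mu^1_{2,2} & D^0\mu^1_{2,3}\\ D^1\mu_0 & D^1\mu^1_{2,2} & D^1\mu^1_{2,3}\\ D^3\mu_0 & D^3\mu^1_{2,2} & D^3\mu^1_{2,3}\end{pmatrix},
\end{align*}
exactly as in \eqref{eq:W_ODE_M2_prep}, since the determinants with a repeated $D^0$ or $D^1$ row vanish. \emph{Next}, from the definition \eqref{eq:VL3} of $\widetilde L_3=u^2(u-4)(u-16)D^3+6u(u^2-15u+32)D^2+(7u^2-68u+64)D^1+(u-4)D^0$, I would solve for $D^3$ applied to each entry:
\begin{align*}
D^3 f=\frac{\widetilde L_3 f-6u(u^2-15u+32)D^2f-(7u^2-68u+64)D^1f-(u-4)D^0f}{u^2(u-4)(u-16)}.
\end{align*}
For $f=\mu^1_{2,2}$ and $f=\mu^1_{2,3}$ one has $\widetilde L_3 f=0$, so substituting into the last row and performing the row operation "subtract the appropriate multiples of the $D^0$ and $D^1$ rows from the $D^3$ row" leaves, in those two columns, only $-\frac{6u(u^2-15u+32)}{u^2(u-4)(u-16)}D^2\mu^1_{2,j}$. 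In the first column the same row operation leaves $-\frac{6u(u^2-15u+32)}{u^2(u-4)(u-16)}D^2\mu_0+\frac{\widetilde L_3\mu_0}{u^2(u-4)(u-16)}=-\frac{6u(u^2-15u+32)}{u^2(u-4)(u-16)}D^2\mu_0+\frac{(3/2)\log u}{u^2(u-4)(u-16)}$.

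\emph{Finally}, I would split the resulting determinant along the first column by linearity of the determinant in that column. The part proportional to $-\frac{6u(u^2-15u+32)}{u^2(u-4)(u-16)}D^2\mu_0$ reassembles, together with the other two columns, into $-\frac{6u(u^2-15u+32)}{u^2(u-4)(u-16)}\check\varOmega_3(u)$, which equals $\frac{3\check\varOmega_3(u)}{2}D^1\log\frac{1}{u^2(4-u)(16-u)}$ after recognizing $\frac{d}{du}\log[u^2(4-u)(16-u)]=\frac2u-\frac1{4-u}-\frac1{16-u}=\frac{6u(u^2-15u+32)}{u^2(u-4)(u-16)}$ — the same computation already carried out in Lemma \ref{lm:W_ODE_M2}. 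The remaining part has first column $\bigl(0,\,0,\,\tfrac{(3/2)\log u}{u^2(u-4)(u-16)}\bigr)^{\mathrm T}$, so cofactor expansion down that column produces exactly $\frac{3}{2}\frac{\log u}{u^2(4-u)(16-u)}\det\begin{pmatrix}D^0\mu^1_{2,2}(u)&D^0\mu^1_{2,3}(u)\\ D^1\mu^1_{2,2}(u)&D^1\mu^1_{2,3}(u)\end{pmatrix}$ (the sign from $u^2(u-4)(u-16)=u^2(4-u)(16-u)$ and the cofactor position work out), giving \eqref{eq:check_Omega3_ODE}.

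\emph{The main obstacle} is not conceptual but bookkeeping: one must be careful that the inhomogeneity $\widetilde L_3\mu_0(u)=\frac32\log u$ genuinely holds for all $u\in(0,4)$ (this is Proposition \ref{lm:Vanhove_L3_ODE}, whose proof tracks a boundary contribution from $K_0(\sqrt u t)-K_0(t)=-\log\sqrt u+O(t^2\log t)$ as $t\to0^+$), and that the two "homogeneous" entries really are annihilated by $\widetilde L_3$ on $(0,4)$ — which is the content of Lemma \ref{lm:W_ODE_M2}, where the restriction $u<4$ comes from convergence of $\IvKM(2,3;\cdot|u)$. Provided those two inputs are in hand, the rest is the determinant identity above, and the signs in the cofactor expansion are the only place demanding attention.
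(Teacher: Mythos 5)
Your proposal is correct and follows essentially the same route as the paper: differentiate the Wro\'nskian row by row, use $\widetilde L_3\IKvM(0,5;1|u)=\tfrac{3}{2}\log u$ together with $\widetilde L_3\mu^1_{2,2}(u)=\widetilde L_3\mu^1_{2,3}(u)=0$ to rewrite the $D^3$ row, and split off the inhomogeneous contribution by multilinearity, whose cofactor expansion yields the $2\times2$ determinant. The only blemish is the displayed identity $\frac{\D}{\D u}\log[u^2(4-u)(16-u)]=\frac{6u(u^2-15u+32)}{u^2(u-4)(u-16)}$, which is off by a factor of $\tfrac{3}{2}$ (the right-hand side actually equals $\tfrac{3}{2}\frac{\D}{\D u}\log[u^2(4-u)(16-u)]$); since you quote the correct final coefficient from Lemma \ref{lm:W_ODE_M2} anyway, this slip does not affect the conclusion.
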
\begin{proof}Differentiating each row of the Wro\'nskian determinant $ \check \varOmega_3(u)$, we obtain\begin{align}
D^{1}\check \varOmega_3(u)=\det\begin{pmatrix}D^0\IKvM(0,5;1|u) & D^0\mu^1_{2,2}(u) & D^0\mu^1_{2,3}(u) \\
D^1\IKvM(0,5;1|u) & D^1\mu^1_{2,2}(u) & D^1\mu^1_{2,3}(u) \\
D^3\IKvM(0,5;1|u) & D^3\mu^1_{2,2}(u) & D^3\mu^1_{2,3}(u) \\
\end{pmatrix}.
\end{align} Using the differential equations in  \eqref{eq:PF_Vv5_u_all}  to reduce the third-order derivatives to linear combinations of lower-order derivatives, we may convert the equation above into\begin{align}\begin{split}
D^{1}\check \varOmega_3(u)={}&\frac{3\check\varOmega_3(u)}{2}D^1\log\frac{1}{u^2 (4-u) ( 16-u)}\\{}&+\det\begin{pmatrix}D^0\IKvM(0,5;1|u) & D^0\mu^1_{2,2}(u) & D^0\mu^1_{2,3}(u) \\
D^1\IKvM(0,5;1|u) & D^1\mu^1_{2,2}(u) & D^1\mu^1_{2,3}(u) \\
\frac{3\log u}{2u^2 (4-u) ( 16-u)} & 0 & 0 \\
\end{pmatrix},\end{split}
\end{align} which is equivalent to the claimed identity. \end{proof}\begin{proposition}[An integral representation for $\check\varOmega_3(u)$]\label{prop:check_C3}The $2\times2 $ determinant appearing in \eqref{eq:check_Omega3_ODE} has an integral representation for $ u\in(0,4)$:\begin{align}
\det\begin{pmatrix}D^0\mu^1_{2,2}(u) & D^0\mu^1_{2,3}(u) \\
D^1\mu^1_{2,2}(u) & D^1\mu^1_{2,3}(u) \\
\end{pmatrix}={}&-\frac{\pi^4}{24}\frac{1}{\sqrt{u^2 (4-u) ( 16-u)}}\int_0^\infty J_0(\sqrt{u}t)[J_0(t)]^4t\D t.\label{eq:Sigma_2_int_repn}
\end{align}As a result, there exists a constant $ \check C_3\in\mathbb R$ such that \begin{align}\begin{split}
[u^2 (4-u) ( 16-u)]^{3/2}\check\varOmega_3(u)={}&\check C_3-\frac{\pi^{4}\sqrt{u}\log u}{8} \int_0^\infty J_1(\sqrt{u}t)[J_0(t)]^4\D t\\{}&+\frac{\pi^{4}}{4}\int_{0}^\infty \frac{1-J_0(\sqrt{u}t)}{t}[J_0(t)]^4\D t\end{split}\label{eq:check_C3}
\end{align}for $u\in(0,4) $.\end{proposition}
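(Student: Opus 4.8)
The proof splits into its essential part --- the integral representation \eqref{eq:Sigma_2_int_repn} for the $2\times2$ Wro\'nskian $\Sigma_2(u):=W[\mu^1_{2,2}(u),\mu^1_{2,3}(u)]$ --- and a routine part, the integration of \eqref{eq:check_Omega3_ODE}. Throughout write $\varpi(u):=u^2(4-u)(16-u)$ and $\varPhi(u):=\int_0^\infty J_0(\sqrt u t)[J_0(t)]^4t\,\D t$, the integral on the right of \eqref{eq:Sigma_2_int_repn}.

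To obtain \eqref{eq:Sigma_2_int_repn} the plan is to exhibit $\varpi(u)^{3/2}\Sigma_2(u)$ and $\varpi(u)\varPhi(u)$ as, up to the stated scalar, one and the same solution of one and the same third-order linear ODE. Both $\mu^1_{2,2}=\IvKM(2,3;1|u)$ and $\mu^1_{2,3}=\IKvM(2,3;1|u)$ lie in $\ker\widetilde L_3$ by Lemma \ref{lm:VanhoveLn}, so the Lagrange-adjoint relation for linear ordinary differential equations places the quotient $\Sigma_2(u)/\varOmega_3(u)=W[\mu^1_{2,2},\mu^1_{2,3}]/\varOmega_3$ in $\ker\widetilde L_3^{*}$; since $\varOmega_3=W[\mu^1_{2,1},\mu^1_{2,2},\mu^1_{2,3}]=\frac{\pi^4}{20}\,\varpi(u)^{-3/2}$ by \eqref{eq:W_detM2_eval}, this says that $\varpi(u)^{3/2}\Sigma_2(u)$ is annihilated by the formal adjoint $\widetilde L_3^{*}$. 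On the $J_0$ side, $\varPhi(u)=p_4(\sqrt u)/\sqrt u$ is a rescaling of Kluyver's density for the planar $4$-step random walk, which is holonomic with leading polynomial $g_4(x)=x^3(x^2-4)(x^2-16)$ \cite{BSWZ2012}; substituting $x=\sqrt u$ --- equivalently, rerunning the integration-by-parts computation in the proof of Lemma \ref{lm:VanhoveLn} for the fourth symmetric power of $\eth^2+t^2$ instead of $\eth^2-t^2$, the boundary term from $J_0(\sqrt u t)\to1$ at $t\to0^+$ being zero --- yields a third-order annihilator of $\varpi(u)\varPhi(u)$, which I expect to coincide with $\widetilde L_3^{*}$ on comparison with \eqref{eq:VL3}, the agreement of the singular set $\{0,4,16\}$ being precisely the kinship between \eqref{eq:m_n_poly} and \eqref{eq:g_poly} flagged in the remark above. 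It then remains to locate $\varpi(u)\varPhi(u)$ and $\varpi(u)^{3/2}\Sigma_2(u)$ relative to each other inside the three-dimensional space $\ker\widetilde L_3^{*}$: comparing local behaviour at $u\to0^+$ and $u\to4^-$ should force proportionality, since $\varPhi$ has there only a mild (logarithmic, resp.\ algebraic) singularity --- as does $\Sigma_2$ after normalisation --- whereas the other two natural basis Wro\'nskians $W[\mu^1_{2,1},\mu^1_{2,2}]/\varOmega_3$ and $W[\mu^1_{2,1},\mu^1_{2,3}]/\varOmega_3$ inherit the stronger logarithm carried by $\mu^1_{2,1}$ as $u\to0^+$, all of which one reads off from the integral definitions and the Bessel bounds \eqref{eq:IK_asympt}, \eqref{eq:IK_sup}, \eqref{eq:K1_sup}; the overall constant $-\pi^4/24$ is then fixed by matching the coefficient of the leading $u^{-1}\log(1/u)$ term as $u\to0^+$, where the Bessel side is controlled by the closed forms for small Bessel moments used in Proposition \ref{prop:W_M2_0} and in \cite{BBBG2008}. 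A more computational variant would write, via $D^1\mu^1_{2,j}(u)=\frac{1}{2\sqrt u}\acute\mu^1_{2,j}(u)$, the Wro\'nskian $\Sigma_2(u)=\frac{1}{2\sqrt u}[\mu^1_{2,2}\acute\mu^1_{2,3}-\mu^1_{2,3}\acute\mu^1_{2,2}]$ as a double integral over $(t,s)\in(0,\infty)^2$, symmetrise in $t\leftrightarrow s$, and collapse the resulting Bessel cross-kernel against a Weber--Schafheitlin/Hankel-type identity of the kind underlying \cite{Zhou2017PlanarWalks}; this too rests on the same Bessel-to-Kluyver dictionary.

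Granting \eqref{eq:Sigma_2_int_repn}, the identity \eqref{eq:check_C3} follows by a one-line integration. Using $\varpi(u)^{3/2}$ as integrating factor, \eqref{eq:check_Omega3_ODE} becomes $\frac{\D}{\D u}\bigl[\varpi(u)^{3/2}\check\varOmega_3(u)\bigr]=-\frac{\pi^4}{16}\,\log u\,\varPhi(u)$. From $\frac{\D}{\D z}[zJ_1(z)]=zJ_0(z)$ and $J_0'=-J_1$ one verifies
\begin{align*}
\frac{\D}{\D u}\Bigl[\sqrt u\int_0^\infty J_1(\sqrt u t)[J_0(t)]^4\,\D t\Bigr]={}&\frac{1}{2}\varPhi(u),\\
\frac{\D}{\D u}\int_0^\infty\frac{1-J_0(\sqrt u t)}{t}[J_0(t)]^4\,\D t={}&\frac{1}{2\sqrt u}\int_0^\infty J_1(\sqrt u t)[J_0(t)]^4\,\D t,
\end{align*}
so that $2\sqrt u\log u\int_0^\infty J_1(\sqrt u t)[J_0(t)]^4\,\D t-4\int_0^\infty\frac{1-J_0(\sqrt u t)}{t}[J_0(t)]^4\,\D t$ is an antiderivative of $\log u\,\varPhi(u)$. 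Integrating once more and absorbing the constant of integration into $\check C_3$ reproduces \eqref{eq:check_C3} verbatim; convergence of the $J_0$-integrals on $(0,4)$ comes from the oscillation of $J_0(\sqrt u t)$ against the $t^{-3/2}$-decay of $t[J_0(t)]^4$.

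The hard point is the middle step of the first part: showing that $\varpi(u)\varPhi(u)$ is exactly this multiple of $\varpi(u)^{3/2}\Sigma_2(u)$ inside $\ker\widetilde L_3^{*}$. The $J_0$-integral cannot be manufactured from the $\ker\widetilde L_3$ data alone, so one genuinely has to import the correspondence between single-scale/two-scale Bessel moments and Kluyver's planar-walk density from \cite{Zhou2017PlanarWalks}, and the connection-coefficient bookkeeping at the singular points $u=0$ and $u=4$ must be carried out with care.
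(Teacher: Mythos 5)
Your second half is correct and is exactly the paper's argument: multiplying \eqref{eq:check_Omega3_ODE} by $[u^2(4-u)(16-u)]^{3/2}$ and antidifferentiating $\log u\int_0^\infty J_0(\sqrt{u}t)[J_0(t)]^4t\D t$ via $\frac{\D}{\D z}[zJ_1(z)]=zJ_0(z)$ yields \eqref{eq:check_C3}. The first half also begins on essentially the paper's track, up to duality: the paper shows directly that $\sqrt{u^2(4-u)(16-u)}\,W[f_1,f_2]\in\ker\widetilde L_3$ for any $f_1,f_2\in\ker\widetilde L_3$ and then expands $\varPsi_2(u)$ in the explicit basis $\{p_4(\sqrt u)/\sqrt u,\ \IvKM(2,3;1|u),\ \IKvM(2,3;1|u)\}$, the first element being $\tfrac{6}{\pi^4}[\IvKM(1,4;1|u)+4\IKvM(1,4;1|u)]$ by \cite{Zhou2017WEF}; your adjoint--Wro\'nskian formulation is equivalent to this via the self-duality of $\widetilde L_3$, and your $u\to0^+$ matching correctly produces the factor $-\pi^4/24$.

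The gap is in the connection-coefficient step, and the specific reason you give for proportionality is factually wrong. You exclude the components along $W[\mu^1_{2,1},\mu^1_{2,2}]/\varOmega_3$ and $W[\mu^1_{2,1},\mu^1_{2,3}]/\varOmega_3$ on the grounds that they ``inherit the stronger logarithm carried by $\mu^1_{2,1}$''. But $\mu^1_{2,1}(u)=O(\log u)$ is not the most singular solution at $u=0$: it is $\mu^1_{2,3}(u)=\IKvM(2,3;1|u)=\tfrac{1}{32}\log^2u+O(\log u)$ that carries the double logarithm, while $\mu^1_{2,2}(u)=\IvKM(2,3;1|u)$ is regular there with nonzero limit $\IKM(1,3;1)$. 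Hence $W[\mu^1_{2,1},\mu^1_{2,2}]/\varOmega_3$ behaves like a \emph{nonzero constant times $u^2$ with no logarithm} --- it is precisely the log-free Frobenius solution at $u=0$, i.e.\ the term $c_2\IvKM(2,3;1|u)$ left undetermined in \eqref{eq:Sigma_2_prep} --- and matching leading $\log$-asymptotics at $u\to0^+$ cannot detect an admixture of it. Your fallback, that behaviour at $u\to4^-$ ``should force proportionality'', is not carried out and would require computing $(4-u)^{1/2}$-type connection data for both $p_4(\sqrt u)/\sqrt u$ and the Bessel-moment Wro\'nskian, which is not easier than the original problem. The paper closes exactly this gap by evaluating at the \emph{ordinary} point $u=1$: using $D^1\IKvM(2,3;1|1)-D^1\IvKM(2,3;1|1)=-\tfrac{\pi}{6\sqrt3}$ from \eqref{eq:IK_reduction} together with $\IKM(2,3;1)=\tfrac{\sqrt{15}}{2\pi}\IKM(1,4;1)$, it finds $\varPsi_2(1)=-\tfrac{\pi^4}{24}p_4(1)$ and concludes $c_2=0$. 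You need this evaluation, or some equivalent second condition, for the proof to go through.
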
\begin{proof}

By direct computation,  one can show that \begin{align}
\widetilde L_3\left[ \sqrt{u^2 (4-u) ( 16-u)} \det\begin{pmatrix}D^{0}f_{1}(u) & D^{0}f_2(u) \\
D^{1}f_{1}(u) & D^{1}f_2(u) \\
\end{pmatrix}\right]=0
\end{align} holds for any two functions $ f_1,f_2\in\ker\widetilde L_3$ that are annihilated by $ \widetilde L_3$. Therefore, for $ u\in(0,4)$,\begin{align}\varPsi_2(u):=
\sqrt{u^2 (4-u) ( 16-u)}\det\begin{pmatrix}D^0\mu^1_{2,2}(u) & D^0\mu^1_{2,3}(u) \\
D^1\mu^1_{2,2}(u) & D^1\mu^1_{2,3}(u) \\
\end{pmatrix}
\end{align} is a linear combination of\begin{align}
\frac{\IvKM(1,4;1|u)+4\IKvM(1,4;1|u)}{5},\;\;\IvKM(2,3;1|u),\text{ and } \IKvM(2,3;1|u),
\end{align} in view of \S\ref{sec:detM2}. However,
we can infer from \cite[][Propositions 3.1.2 and 5.1.4]{Zhou2017WEF} that \begin{align}\begin{split}&
\IvKM(1,4;1|u)+4\IKvM(1,4;1|u)\\={}&\frac{\pi^{4}}{6}\frac{p_{4}(\sqrt{u})}{\sqrt{u}}:=\frac{\pi^{4}}{6}\int_0^\infty J_0(\sqrt{u}t)[J_0(t)]^4t\D t
\end{split}\end{align}holds for $ u\in(0,4)$, so we have \begin{align}
\varPsi_2(u)=c_{1}\frac{p_{4}(\sqrt{u})}{\sqrt{u}}+c_2\IvKM(2,3;1|u)+c_{3}\IKvM(2,3;1|u),\quad \forall u\in(0,4),
\end{align}where the constants $ c_1,c_2,c_3$ will be determined from the asymptotic behavior of $\varPsi_2(u) $  in the $ u\to0^+$ limit and the special value $ \varPsi_2(1)$.

    We note that in the decomposition\begin{align}\begin{split}
\IKvM(2,3;1|u)={}&\frac12\int_0^\infty K_0(\sqrt{u}t)I_0(t)K_0(t)\D t\\{}&+\int_0^\infty K_0(\sqrt{u}t)I_0(t)K_0(t)\left[ I_0(t)K_0(t)-\frac{1}{2t} \right]t\D t,\end{split}
\end{align} we have \cite[cf.][(3.3)]{Bailey1936II}\begin{align}\begin{split}
\int_0^\infty K_0(\sqrt{u}t)I_0(t)K_0(t)\D t={}&\frac{\mathbf K\left( \sqrt{\frac{1}{2}\left( 1+i\sqrt{\frac{4-u}{u}} \right)} \right)\mathbf K\left( \sqrt{\frac{1}{2}\left( 1-i\sqrt{\frac{4-u}{u}} \right)} \right)}{\sqrt{u}}\\={}&\frac{1}{2\sqrt{4-u}}\log^2\sqrt{\frac{4-u}{u}}+O\left( \log\frac{4-u}{u} \right),\quad\text{as } u\to0^+,\end{split}
\end{align} with $ \mathbf K(\sqrt{\lambda})=\int_0^{\pi/2}(1-\lambda\sin^2\theta)^{-1/2}\D\theta$, and\begin{align}\begin{split}&
\int_0^\infty K_0(\sqrt{u}t)I_0(t)K_0(t)\left[ I_0(t)K_0(t)-\frac{1}{2t} \right]t\D t\\={}&O\left( \int_0^\infty K_0(\sqrt{u}t)I_0(t)K_0(t)\frac{\D t}{\sqrt{t}} \right)=O\left( \int_0^\infty [1+|\log(\sqrt{u}t)|]I_0(t)K_0(t)\frac{\D t}{\sqrt{t}} \right)\\={}&O(\log u)\end{split}
\end{align}according to \begin{align}
\sup_{t>0}t^{3/2}\left\vert I_0(t)K_0(t)-\frac{1}{2t} \right\vert<\infty\quad \text{and}\quad\sup_{t>0}\frac{K_0(t)}{1+|\log t|}<\infty.
\end{align}Thus, we have \begin{align}
\IKvM(2,3;1|u)=\frac{\log^2 u}{32}+O(\log u),\quad\text{as } u\to0^+,
\end{align}and similarly,\begin{align}
uD^{1}\IKvM(2,3;1|u)=\frac{\log u}{16}+O(1),\quad\text{as } u\to0^+.
\end{align} Therefore, we have \begin{align}\begin{split}
\varPsi_2(u)={}&8[D^{0}\IvKM(2,3;1|u)][uD^{1}\IKvM(2,3;1|u)]+O(1)\\={}&8\IKM(1,3;1)\frac{\log u}{16}+O(1)=\frac{\pi^2}{32}\log u+O(1)\end{split}
\end{align}in the regime $ u\to0^+$. Meanwhile, we recall that $ \frac{p_4(\sqrt{u})}{\sqrt{u}}=-\frac{3\log u}{4\pi^2}+O(1)$ \cite[][Theorem 4.4]{BSWZ2012} and $ \IvKM(2,3;$ $1|u)=O(1)$ as $ u\to0^+$, so we must have\begin{align}
\varPsi_2(u)=-\frac{\pi^{4}}{24}\frac{p_4(\sqrt{u})}{\sqrt{u}}+c_2\IvKM(2,3;1|u),\quad u\in(0,4),\label{eq:Sigma_2_prep}
\end{align}for a certain constant $c_2$.

Bearing in mind that \begin{align}\begin{split}&
D^1\IKvM(2,3;1|1)-D^1\IvKM(2,3;1|1)\\={}&-\frac{1}{2}\int_0^\infty[I_0(t)K_1(t)+I_1(t)K_0(t)]I_0(t)[K_0(t)]^2t^2\D t\\={}&-\frac{1}{2}\int_0^\infty I_0(t)[K_0(t)]^2t\D t=-\frac{\pi}{6\sqrt{3}},\end{split}\label{eq:IK_reduction}
\end{align} we compute \begin{align}\begin{split}
\varPsi_2(1)={}&3\sqrt{5}\det\begin{pmatrix}\IKM(2,3;1) & 0 \\
D^1\IvKM(2,3;1|1) & -\frac{\pi}{6\sqrt{3}} \\
\end{pmatrix}\\={}&-\frac{\pi\sqrt{5}}{2\sqrt{3}}\IKM(2,3;1)=-\frac{\pi^{4}}{24}p_4(1),
\end{split}\end{align}where the last equality can be inferred from \cite[][Proposition 3.1.2]{Zhou2017WEF}. Therefore, we have $ c_2=0$ in \eqref{eq:Sigma_2_prep}, which allows us to confirm \eqref{eq:Sigma_2_int_repn}.

A  solution to  \eqref{eq:check_Omega3_ODE}, namely\begin{align}\begin{split}D^{1}\check \varOmega_3(u)={}&\frac{3\check\varOmega_3(u)}{2}D^1\log\frac{1}{u^2 (4-u) ( 16-u)}\\{}&-\frac{\pi^4}{16}\frac{\log u}{[u^2 (4-u) ( 16-u)]^{3/2}}\int_0^\infty J_0(\sqrt{u}t)[J_0(t)]^4t\D t,\end{split}
\tag{\ref{eq:check_Omega3_ODE}$'$}
\end{align}has the form\begin{align}\check
\varOmega_3(u)=\frac{1}{[u^2 (4-u) ( 16-u)]^{3/2}}\left( \check C_3- \frac{\pi^4}{16}\int_{0}^u\left\{ \int_0^\infty J_0(\sqrt{v}t)[J_0(t)]^4t\D t \right\}\log v\D v\right),
\end{align}where the constant of integration $ \check C_3$ is equal to $ 2^9\lim_{u\to0^+}u^3\check \varOmega_3(u)$.

Here, noting that \begin{align}
J_0(\sqrt{v}t)=\frac{\partial}{\partial v}\frac{2\sqrt{v}J_1(\sqrt{v}t)}{t},\quad\frac{tJ_1(\sqrt{v}t)}{2\sqrt{v}}=-\frac{\partial J_0(\sqrt{v}t)}{\partial v}
\end{align} we may integrate by parts, as follows:\begin{align}\begin{split}&
\int_{0}^u\left\{ \int_0^\infty J_0(\sqrt{v}t)[J_0(t)]^4t\D t \right\}\log v\D  v\\={}&(2\sqrt{u}\log u) \int_0^\infty J_1(\sqrt{u}t)[J_0(t)]^4\D t-\int_{0}^u\left\{ \int_0^\infty \frac{2J_1(\sqrt{v}t)}{\sqrt{v}}[J_0(t)]^4\D t \right\}\D  v\\={}&(2\sqrt{u}\log u) \int_0^\infty J_1(\sqrt{u}t)[J_0(t)]^4\D t-4 \int_0^\infty \frac{1-J_0(\sqrt{u}t)}{t}[J_0(t)]^4\D t.\end{split}
\end{align}This completes the proof of \eqref{eq:check_C3}.\end{proof}To facilitate computations of the Wro\'nskian matrix $ \check \varOmega_3(u)$, we recall the notations $ \IpKM$ and $ \IKpM$ from Definition \ref{defn:IpKM_IKpM}, before writing down the following analog of \eqref{eq:Omega3u_alt}:\begin{align}\begin{split}
2^{3}u^{3/2}\check\varOmega_3(u)={}&\det\begin{pmatrix}\IKvM(0,5;1|u)&\IvKM(2,3;1|u)&\IKvM(2,3;1|u) \\
\IKpM(0,5;1|u)&\IpKM(2,3;1|u)&\IKpM(2,3;1|u) \\
\IKvM(0,5;3|u)&\IvKM(2,3;3|u)&\IKvM(2,3;3|u) \\
\end{pmatrix}\\={}&\det\begin{pmatrix}\IKvM(0,5;1|u)&\mu^1_{2,2}(u) & \mu^1_{2,3}(u) \\
\IKpM(0,5;1|u)&\acute\mu^1_{2,2}(u) & \acute\mu^1_{2,3}(u) \\
\IKvM(0,5;3|u)&\mu^2_{2,2}(u) & \mu^2_{2,3}(u) \\
\end{pmatrix}.\end{split}\label{eq:IK_echelon}
\end{align}

In the next proposition, we factorize the last determinant in the $ u\to0^+$ regime.\begin{proposition}[Factorization of $ \check \varOmega_3(0^+)$]We have \begin{align}\check
C_{3}=2^9\lim_{u\to0^+}u^3\check \varOmega_3(u)=\pi^{2 }V_4.
\end{align}Consequently, we have \begin{align}
135\sqrt{5}\check\varOmega_3(1)=\pi^{2 }V_4+\frac{\pi^{4}}{4}\int_{0}^\infty \frac{1-J_0(t)}{t}[J_0(t)]^4\D t.\label{eq:check_Omega_3_1_int_repn}
\end{align}\end{proposition}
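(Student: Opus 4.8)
The plan is to mimic, almost verbatim, the proof of Proposition~\ref{prop:W_M2_0}, treating $\IKvM(0,5;1|u)$ in the r\^ole played there by $\mu^1_{2,1}(u)$. First I would start from the ``echelon form'' \eqref{eq:IK_echelon}, peel off its prefactor by distributing $u^{3/2}$ over the three rows (top row untouched, middle row scaled by $\sqrt u$, bottom row by $u$), so that
\begin{align*}
2^{3}u^{3}\check\varOmega_3(u)=\det\begin{pmatrix}
\IKvM(0,5;1|u) & \IvKM(2,3;1|u) & \IKvM(2,3;1|u)\\
\sqrt u\,\IKpM(0,5;1|u) & \sqrt u\,\IpKM(2,3;1|u) & \sqrt u\,\IKpM(2,3;1|u)\\
u\,\IKvM(0,5;3|u) & u\,\IvKM(2,3;3|u) & u\,\IKvM(2,3;3|u)
\end{pmatrix}.
\end{align*}
The last two columns are literally the last two columns of the matrix analysed in Proposition~\ref{prop:W_M2_0}, so I may simply reuse the $u\to0^+$ asymptotics recorded there: since $I_0(\sqrt u t)\to1$ one has $\IvKM(2,3;1|u)=\IKM(1,3;1)+O(u)$, $\sqrt u\,\IpKM(2,3;1|u)=O(u)$ and $u\,\IvKM(2,3;3|u)=O(u)$; the entries $\IKvM(2,3;1|u)$ and $\sqrt u\,\IKpM(2,3;1|u)$ are $O(u^{-1/2})$ by \eqref{eq:IK_sup} together with $\int_0^\infty\sqrt u\,K_1(\sqrt u t)\,t\,\D t=u^{-1/2}$; and $u\,\IKvM(2,3;3|u)=\tfrac14+O(\sqrt u)$ upon isolating the leading piece $\tfrac14\int_0^\infty K_0(\sqrt u t)\,t\,\D t=\tfrac1{4u}\int_0^\infty K_0(s)\,s\,\D s=\tfrac1{4u}V_1=\tfrac1{4u}$.

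Next I would treat the genuinely new first column. Because $K_0(\sqrt u t)=O(1+|\log u|+|\log t|)$ by \eqref{eq:Klog_sup} while $[K_0(t)]^4 t^{2\ell-1}$ decays exponentially, one gets $\IKvM(0,5;1|u)=O(\log u)$ and $u\,\IKvM(0,5;3|u)=O(u\log u)$. For the middle entry I would argue exactly as in \eqref{eq:K-1}: writing $\sqrt u\,t\,K_1(\sqrt u t)=1+[\sqrt u\,t\,K_1(\sqrt u t)-1]$ and controlling the error integral by \eqref{eq:K1_sup} gives $\sqrt u\,\IKpM(0,5;1|u)=-\int_0^\infty[K_0(t)]^4 t\,\D t+o(1)=-V_4+o(1)$. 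Hence, as $u\to0^+$,
\begin{align*}
2^{3}u^{3}\check\varOmega_3(u)=\det\begin{pmatrix}
O(\log u) & \IKM(1,3;1)+O(u) & O(u^{-1/2})\\
-V_4+o(1) & O(u) & O(u^{-1/2})\\
O(u\log u) & O(u) & \tfrac14+O(\sqrt u)
\end{pmatrix},
\end{align*}
which differs from the matrix of Proposition~\ref{prop:W_M2_0} only in its first column. A cofactor expansion along the bottom row kills every permutation except the transposition $(1\,2)$, leaving $\lim_{u\to0^+}2^{3}u^{3}\check\varOmega_3(u)=\tfrac14\,V_4\,\IKM(1,3;1)$. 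Therefore $\check C_3=2^{9}\lim_{u\to0^+}u^{3}\check\varOmega_3(u)=2^{4}V_4\,\IKM(1,3;1)=\pi^{2}V_4$, where the last equality is the evaluation $\IKM(1,3;1)=\pi^{2}/16$ of \cite[][(55)]{BBBG2008}. The second assertion then follows by putting $u=1$ in \eqref{eq:check_C3}: one has $[1\cdot(4-1)(16-1)]^{3/2}=45^{3/2}=135\sqrt5$, and $\log1=0$ annihilates the middle term, so $135\sqrt5\,\check\varOmega_3(1)=\check C_3+\tfrac{\pi^{4}}{4}\int_{0}^\infty\frac{1-J_0(t)}{t}[J_0(t)]^4\,\D t$, which is \eqref{eq:check_Omega_3_1_int_repn}.

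The main obstacle is purely one of bookkeeping rather than of ideas: one must check that all the entrywise estimates above hold with enough uniformity in $u$ to justify carrying out the cofactor expansion termwise in the limit. This requires nothing beyond the uniform bounds \eqref{eq:IK_sup}--\eqref{eq:K1_sup} (and \eqref{eq:Klog_sup}) already exploited in Propositions~\ref{prop:W_M2_0} and \ref{prop:W_N2_0}, so the estimates are routine dominated-convergence arguments; the only piece of arithmetic that must be tracked is the interplay of the powers of two, $2^{9}/(2^{3}\cdot 4)=2^{4}$, with $\IKM(1,3;1)=\pi^{2}/2^{4}$, whose product is exactly $\pi^{2}$.
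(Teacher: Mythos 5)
Your proposal is correct and follows essentially the same route as the paper, which likewise distributes the prefactor $u^{3/2}$ over the rows of \eqref{eq:IK_echelon}, records the entrywise asymptotics (with the new first column contributing $O(\log u)$, $-V_4+O(\sqrt{u}\log u)$, $O(u\log u)$), extracts the limit $\tfrac14 V_4\,\IKM(1,3;1)=\pi^2V_4/2^6$ of $2^3u^3\check\varOmega_3(u)$, and then sets $u=1$ in \eqref{eq:check_C3}. The only difference is that you spell out the bookkeeping that the paper compresses into ``using methods in Proposition \ref{prop:W_M2_0}.''
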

\begin{proof}Using methods in Proposition \ref{prop:W_M2_0}, we can  show that \begin{align}\begin{split}
2^3u^{3}\check\varOmega_3(u)={}&\det\left(\begin{array}{rrr}\IKvM(0,5;1|u)&\IvKM(2,3;1|u)&\IKvM(2,3;1|u) \\
\sqrt{u}\IKpM(0,5;1|u)&\sqrt{u}\IpKM(2,3;1|u)&\sqrt{u}\IKpM(2,3;1|u) \\
u\IKvM(0,5;3|u)&u\IvKM(2,3;3|u)&u\IKvM(2,3;3|u) \\
\end{array}\right)\\={}& \det \begin{pmatrix}O(\log u) & \IKM(1,3;1)+O(u)&O(1/\sqrt{u}) \\
-V_{4}+O(\sqrt{u}\log u) & O(u)&O(1/\sqrt{u}) \\
O(u\log u)&O(u)&\frac{1}{4}+O(\sqrt{u}) \\
\end{pmatrix}\\={}&\frac{\pi^2V_4}{2^6}+o(1),\quad\text{as }u\to0^+,\end{split}
\end{align}thereby proving our claims.\end{proof}

\begin{proposition}[Factorization of $ \check \varOmega_3(1)$]We have the following factorization \begin{align}\check
\varOmega_3(1)=\frac{\IKM(1,2;1)}{2^{3}}\det\check {\mathbf M}_2
\end{align}where\begin{align}
\det\check{\mathbf M}_2:=\det
\begin{pmatrix}\IKM(0,5;1) & \IKM(0,5;3) \\
\IKM(2,3;1) & \IKM(2,3;3) \\
\end{pmatrix}=\frac{2 \pi^{3}}{15\sqrt{15}}m(1+x_1+x_2+x_3+x_4)
.\label{eq:detMv2}
\end{align}\end{proposition}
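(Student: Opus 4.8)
The plan is to mirror the two-step scheme of \S\ref{sec:detM2}: a purely algebraic factorization of $\check\varOmega_3(1)$, followed by an evaluation that turns the integral representation \eqref{eq:check_Omega_3_1_int_repn} into a Mahler measure by way of Broadhurst's formula \eqref{eq:BroadhurstMahler}. For the factorization, I would set $u=1$ in the echelon representation \eqref{eq:IK_echelon}, obtaining
\begin{align*}
2^{3}\check\varOmega_3(1)=\det\begin{pmatrix}\IKM(0,5;1)&\IKM(2,3;1)&\IKM(2,3;1)\\\IKpM(0,5;1|1)&\acute\mu^1_{2,2}(1)&\acute\mu^1_{2,3}(1)\\\IKM(0,5;3)&\IKM(2,3;3)&\IKM(2,3;3)\end{pmatrix}.
\end{align*}
Subtracting the second column from the last and invoking $I_0(t)K_1(t)+I_1(t)K_0(t)=1/t$ --- which gives $\acute\mu^1_{2,3}(1)-\acute\mu^1_{2,2}(1)=-\IKM(1,2;1)$ exactly as in the proof of Proposition~\ref{prop:W_M2_1} --- turns the last column into $(0,-\IKM(1,2;1),0)^{\mathrm T}$. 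A cofactor expansion along that column leaves the $2\times2$ minor $\det\check{\mathbf M}_2$ multiplied by $\IKM(1,2;1)$, so $\check\varOmega_3(1)=2^{-3}\IKM(1,2;1)\det\check{\mathbf M}_2$.

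For the evaluation, I would feed this into \eqref{eq:check_Omega_3_1_int_repn}, namely $135\sqrt5\,\check\varOmega_3(1)=\pi^{2}V_4+\frac{\pi^{4}}{4}\int_0^\infty\frac{1-J_0(t)}{t}[J_0(t)]^4\D t$, and compute the remaining Bessel integral. Writing it as $\int_0^\infty(1-J_0(t))[J_0(t)]^4\D(\log t)$ and integrating by parts --- the boundary terms vanish because $(1-J_0(t))[J_0(t)]^4$ is $O(t^{2})$ as $t\to0^{+}$ and $O(t^{-2})$ as $t\to\infty$ --- together with $\frac{\D}{\D t}\{(1-J_0(t))[J_0(t)]^4\}=5J_1(t)[J_0(t)]^4-4J_1(t)[J_0(t)]^3$, one gets
\begin{align*}
\int_0^\infty\frac{1-J_0(t)}{t}[J_0(t)]^4\D t=-5\int_0^\infty J_1(t)[J_0(t)]^4\log t\,\D t+4\int_0^\infty J_1(t)[J_0(t)]^3\log t\,\D t.
\end{align*}
Broadhurst's formula \eqref{eq:BroadhurstMahler} applied with $n=5$ and with $n=4$ identifies the two integrals on the right as $\frac15[\log2-\gamma-m(1+x_1+x_2+x_3+x_4)]$ and $\frac14[\log2-\gamma-m(1+x_1+x_2+x_3)]$; subtracting, the $\log2-\gamma$ contributions cancel, and since $m(1+x_1+x_2+x_3)=\frac{4V_4}{\pi^{2}}$ by \eqref{eq:V3V4}, the Bessel integral equals $m(1+x_1+x_2+x_3+x_4)-\frac{4V_4}{\pi^{2}}$.

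Substituting back, the $\pi^{2}V_4$ terms cancel and $135\sqrt5\,\check\varOmega_3(1)=\frac{\pi^{4}}{4}m(1+x_1+x_2+x_3+x_4)$; combining this with the factorization above and the classical value $\IKM(1,2;1)=\frac{\pi}{3\sqrt3}$ then yields $\det\check{\mathbf M}_2=\frac{2^{3}\pi^{4}}{4\cdot135\sqrt5}\cdot\frac{3\sqrt3}{\pi}\,m(1+x_1+x_2+x_3+x_4)=\frac{2\pi^{3}}{15\sqrt{15}}m(1+x_1+x_2+x_3+x_4)$, as claimed. The main obstacle is the integration by parts that unlocks the Bessel integral: one must check carefully that the boundary term at $t\to0^{+}$ genuinely vanishes (there $\log t$ diverges, but it is defeated by the $O(t^{2})$ prefactor) and that both resulting log-weighted Bessel integrals converge absolutely --- each integrand decays like $t^{-2}\log t$ or faster at infinity --- so that \eqref{eq:BroadhurstMahler} applies verbatim for $n=4$ and $n=5$.
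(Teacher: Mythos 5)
Your proposal is correct and follows essentially the same route as the paper: setting $u=1$ in \eqref{eq:IK_echelon}, eliminating the third column via $I_0(t)K_1(t)+I_1(t)K_0(t)=1/t$ to get $2^{3}\check\varOmega_3(1)=\IKM(1,2;1)\det\check{\mathbf M}_2$, then converting the Bessel integral in \eqref{eq:check_Omega_3_1_int_repn} into $m(1+x_1+x_2+x_3+x_4)-m(1+x_1+x_2+x_3)$ by the same integration by parts and Broadhurst's formula \eqref{eq:BroadhurstMahler}, and finishing with \eqref{eq:V3V4}. The only difference is that you spell out the boundary-term and convergence checks that the paper leaves implicit; all your constants check out.
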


\begin{proof}Setting $u=1$ in \eqref{eq:IK_echelon}, and referring back to \eqref{eq:IK_reduction}, we may equate $ 2^3\check\varOmega_3(1)$ with \begin{align}\begin{split}&
\det\begin{pmatrix}\IKM(0,5;1)&\IKM(2,3;1)&\IKM(2,3;1) \\
\IKpM(0,5;1)&\IpKM(2,3;1)&\IKpM(2,3;1) \\
\IKM(0,5;3)&\IKM(2,3;3)&\IKM(2,3;3) \\
\end{pmatrix}\\={}&\det\begin{pmatrix}\IKM(0,5;1)&\IKM(2,3;1)&0 \\
\IKpM(0,5;1)&\IpKM(2,3;1)&-\IKM(1,2;1) \\
\IKM(0,5;3)&\IKM(2,3;3)&0 \\
\end{pmatrix}\\={}&\IKM(1,2;1)\det\check {\mathbf M}_{2}=\frac{\pi\det\check {\mathbf M}_2}{3\sqrt{3}}.\end{split}
\end{align}

Substituting into the integral representation for  $\check \varOmega_3(1)$ in \eqref{eq:check_Omega_3_1_int_repn}, we see that\begin{align}
\frac{45\sqrt{5}\pi\det\check {\mathbf M}_2}{2^3\sqrt{3}}=\pi^{2 }V_4+\frac{\pi^{4}}{4}\int_{0}^\infty \frac{1-J_0(t)}{t}[J_0(t)]^4\D t.
\end{align}Meanwhile, integrating by parts, we find\begin{align}\begin{split}
\int_0^\infty\frac{1-J_0(t)}{t}[J_0(t)]^4\D t={}&4\int_{0}^\infty J_1(t)[J_{0}(t)]^{3}\log t\D t-5\int_{0}^\infty J_1(t)[J_{0}(t)]^{4}\log t\D t\\={}&m(1+x_1+x_2+x_3+x_4)-m(1+x_1+x_2+x_3),\end{split}
\label{eq:Mahler_diff}\end{align}as a result of Broadhurst's integral representation for Mahler measures, given  in \eqref{eq:BroadhurstMahler}. Combining the last two equations while recalling  $ m(1+x_1+x_2+x_3)=\frac{4V_4}{\pi^2}$ from \eqref{eq:V3V4}, we achieve our goal.  \end{proof}


\subsection{Relation between $ \det \check {\mathbf N}_2$ and $ m(1+x_1+x_2+x_3+x_4+x_5)$}As a variation on the Wro\'nskian determinant \begin{align}
\omega_{4}(u)=W\left[ \frac{\IvKM(1,5;1|u)+5\IKvM(1,5;1|u)}{6},\IvKM(2,4;1|u),\IvKM(3,3;1|u) ,\IKvM(2,4;1|u)\right]
\end{align}treated in \S\ref{sec:detN2}, we consider its ``vacuum analog''\begin{align}
\check \omega_4(u)=W[\IKvM(0,6;1|u),\IvKM(2,4;1|u),\IvKM(3,3;1|u) ,\IKvM(2,4;1|u)].
\end{align}\begin{lemma}[Differential equation for $ \check\omega_4(u)$]For $ u\in(0,1)$, we have\begin{align}
\begin{split}&D^{1}\omega_4(u)\\={}&2\omega_4 (u)D^1\log\frac{1}{u^2 (1-u)(9-u) ( 25-u)}\\{}&+\frac{15\log u}{4u^2 (1-u)(9-u) ( 25-u)}\det\begin{pmatrix}D^0\nu^1_{2,2}(u) & D^0\nu^1_{2,3}(u) & D^0\nu^1_{2,4}(u) \\
D^1\nu^1_{2,2}(u) & D^1\nu^1_{2,3}(u) & D^1\nu^1_{2,4}(u) \\
D^2\nu^1_{2,2}(u) & D^2\nu^1_{2,3}(u) & D^2\nu^1_{2,4}(u) \\
\end{pmatrix},\end{split}
\end{align}where $\nu^1_{2,2}(u) =\IvKM(2,4;1|u)$, $ \nu^1_{2,3}(u)=\IvKM(3,3;1|u)$ and $\nu^1_{2,4}(u)=\IKvM(2,4;1|u) $.\end{lemma}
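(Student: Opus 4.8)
The plan is to replay, one order higher, the proof of the differential equation for $\check\varOmega_3(u)$ given just above: here $\check\omega_4(u)=W[\IKvM(0,6;1|u),\nu^1_{2,2}(u),\nu^1_{2,3}(u),\nu^1_{2,4}(u)]$ takes the role of $\check\varOmega_3(u)$ and $\widetilde L_4$ that of $\widetilde L_3$. The only genuinely new input is an inhomogeneous Vanhove equation for the two-scale $5$-loop vacuum, $\widetilde L_4\IKvM(0,6;1|u)=\frac{15}{4}\log u$ for $u\in(0,\infty)$, which I would establish exactly as Proposition \ref{lm:Vanhove_L3_ODE}. First write $\widetilde L_4K_0(\sqrt{u}t)$ in closed form as a combination of $K_0(\sqrt{u}t)$ and $K_1(\sqrt{u}t)$ with polynomial coefficients in $t$; specializing to $u=1$ and invoking the integration-by-parts identity $\int_0^\infty K_1(t)[K_0(t)]^5t^{2n}\D t=\frac{n}{3}\IKM(0,6;2n-1)$ makes the resulting integrand collapse, so $\widetilde L_4\IKvM(0,6;1|1)=0$. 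Then start from $0=\int_0^\infty\frac{K_0(\sqrt{u}t)-K_0(t)}{t}L_6\{[K_0(t)]^5\}\D t$, with $L_6$ the Borwein--Salvy operator annihilating $\{[I_0(t)]^j[K_0(t)]^{5-j}\}$, and integrate by parts along the descent $L_6=\mathscr L_{6,6}=\eth^1\mathscr L_{6,5}-5t^2\mathscr L_{6,4}$. The sole surviving boundary term comes from $\lim_{t\to0^+}[K_0(\sqrt{u}t)-K_0(t)]=-\log\sqrt{u}$ against $\mathscr L_{6,5}\{[K_0(t)]^5\}=120[-tK_1(t)]^5\to-120$, and combining it with $\int_0^\infty[K_0(t)]^5L_6^*\frac{K_0(\sqrt{u}t)-K_0(t)}{t}\D t=2^4\bigl(\widetilde L_4\IKvM(0,6;1|u)-\widetilde L_4\IKvM(0,6;1|1)\bigr)$, which follows from $t\widetilde L_4K_0(\sqrt{u}t)=\frac{1}{2^4}L_6^*\frac{K_0(\sqrt{u}t)}{t}$, fixes the constant at $\frac{120}{2\cdot2^4}=\frac{15}{4}$.

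With that in place, the rest is bookkeeping, structurally identical to the $\check\varOmega_3$ argument. Write $f_0=\IKvM(0,6;1|u)$ and $f_j=\nu^1_{2,j}(u)$ for $j\in\{2,3,4\}$; the latter three lie in $\ker\widetilde L_4$ on $(0,1)$ by Lemma \ref{lm:VanhoveLn}. Differentiating $\check\omega_4(u)=W[f_0,f_2,f_3,f_4]$ row by row kills every summand with a repeated row, leaving $D^1\check\omega_4(u)$ equal to the determinant with the row pattern $(D^0,D^1,D^2,D^4)$. Substituting, column by column, $D^4f=\mathfrak n_4(u)^{-1}\bigl[\widetilde L_4f-2\mathfrak n_4'(u)D^3f-(\text{lower-order terms})\bigr]$ with $\mathfrak n_4(u)=u^2(u-1)(u-9)(u-25)$ the leading polynomial of $\widetilde L_4$ (Lemma \ref{lm:VanhoveLn}), multilinearity splits that bottom row: the $D^3$-piece reconstitutes the original Wro\'nskian matrix, contributing $-\frac{2\mathfrak n_4'(u)}{\mathfrak n_4(u)}\check\omega_4(u)=2\check\omega_4(u)D^1\log\frac{1}{u^2(1-u)(9-u)(25-u)}$; the $D^2$-, $D^1$- and $D^0$-pieces each have a repeated row and vanish; and the inhomogeneous piece $\widetilde L_4f=\frac{15}{4}\log u$, present only in the $f_0$-column, leaves the bottom row $\bigl(\frac{15}{4}\log u/\mathfrak n_4(u),0,0,0\bigr)$. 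Cofactor expansion along that row, together with $-1/\mathfrak n_4(u)=1/[u^2(1-u)(9-u)(25-u)]$ on $(0,1)$, turns the last determinant into $\frac{15\log u}{4u^2(1-u)(9-u)(25-u)}$ times the $3\times3$ minor in the columns $\nu^1_{2,2}(u),\nu^1_{2,3}(u),\nu^1_{2,4}(u)$ and rows $D^0,D^1,D^2$, which is exactly the claimed identity.

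The delicate point is confined to the first step: pinning down the constant $\frac{15}{4}$, and above all its sign. In the $\check\varOmega_3$ case there were four factors $K_0(t)$, so the relevant boundary limit $\mathscr L_{5,4}\{[K_0(t)]^4\}\to+24$ is positive; here there are five, so $\mathscr L_{6,5}\{[K_0(t)]^5\}\to-120$ is negative. This reversal is compensated exactly by the sign in $t\widetilde L_nK_0(\sqrt{u}t)=2^{-n}(-1)^nL_{n+2}^*\frac{K_0(\sqrt{u}t)}{t}$, which reads $+2^{-4}$ for $n=4$ versus $-2^{-3}$ for $n=3$; the two must be tracked together to land on the correct positive value. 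Everything downstream is a direct transcription of the determinant manipulations already carried out for $\check\varOmega_3(u)$, plus the elementary identity $u^2(1-u)(9-u)(25-u)=-\mathfrak n_4(u)$ on $(0,1)$.
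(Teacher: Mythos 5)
Your proposal is correct and follows the same route as the paper, which likewise derives the inhomogeneous equation $\widetilde L_4\,\IKvM(0,6;1|u)=\tfrac{15}{4}\log u$ (together with $\widetilde L_4$ annihilating the other three entries) by the method of Lemma \ref{lm:Vanhove_L3_ODE} and then differentiates the Wro\'nskian exactly as for $\check\varOmega_3(u)$. Your bookkeeping checks out — the boundary limit $\mathscr L_{6,5}\{[K_0(t)]^5\}\to-120$, the sign flip in $t\widetilde L_nK_0(\sqrt{u}t)=2^{-n}(-1)^nL_{n+2}^*K_0(\sqrt{u}t)/t$, the integration-by-parts identity with the factor $\tfrac{2n}{6}$, and the subleading coefficient $2\,\D\mathfrak n_4(u)/\D u$ all match, yielding $\tfrac{60}{16}=\tfrac{15}{4}$ — and you in fact supply more detail than the paper's own two-line proof.
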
\begin{proof}With the fourth-order Vanhove operator $\widetilde L_4$ defined in \eqref{eq:VL4}, we can establish (using methods similar to those in Lemma \ref{lm:Vanhove_L3_ODE}) the following differential equations: \begin{align}
\begin{cases}\widetilde L_4\IKvM(0,6;1|u)=\frac{15}{4}\log u, & \forall u\in(0,\infty); \\\widetilde L_4\IvKM(2,4;1|u)= 0,& \forall u\in(0,9);\\
\widetilde L_4\IvKM(3,3;1|u)= 0,& \forall u\in(0,1);\ \ \\
\widetilde L_4\IKvM(2,4;1|u)= 0, & \forall u\in(0,\infty). \\
\end{cases}\label{eq:PF_Vv6_u_all}
\end{align}

One can subsequently differentiate $\check \omega_4(u) $, with manipulations similar to those intended for $ \check\varOmega_3(u)$.  \end{proof}\begin{proposition}For $u\in(0,1)$, we have \begin{align}
\det\begin{pmatrix}D^0\nu^1_{2,2}(u) & D^0\nu^1_{2,3}(u) & D^0\nu^1_{2,4}(u) \\
D^1\nu^1_{2,2}(u) & D^1\nu^1_{2,3}(u) & D^1\nu^1_{2,4}(u) \\
D^2\nu^1_{2,2}(u) & D^2\nu^1_{2,3}(u) & D^2\nu^1_{2,4}(u) \\
\end{pmatrix}=\frac{\pi ^6}{80 u^2 (1-u) (9-u) (25-u)}\int_0^\infty J_0(\sqrt{u}t)[J_0(t)]^5t\D t.\label{eq:psi_p5}
\end{align}Consequently, there exists a constant $ \check c_4\in\mathbb R$ such that \begin{align}\begin{split}
[u^2 (1-u) (9-u) (25-u)]^{2}\check\omega_4(u)={}&\check c_4+\frac{3 \pi ^6 \sqrt{u} \log u}{32} \int_0^\infty J_1(\sqrt{u}t)[J_0(t)]^5\D t\\{}&-\frac{3 \pi ^6}{16}\int_{0}^\infty \frac{1-J_0(\sqrt{u}t)}{t}[J_0(t)]^5\D t\end{split}\label{eq:check_c4}
\end{align}is valid for $ u\in(0,1)$.\end{proposition}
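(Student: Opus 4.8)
The plan is to mirror the proof of Proposition~\ref{prop:check_C3}: first establish the integral identity \eqref{eq:psi_p5}, then deduce \eqref{eq:check_c4} from the differential equation of the preceding lemma by integrating by parts. Throughout put $q(u):=u^2(1-u)(9-u)(25-u)$ and write $\varSigma_3(u)$ for the $3\times3$ determinant on the left of \eqref{eq:psi_p5}; it is a Wro\'nskian-type minor built from $\nu^1_{2,2}(u)=\IvKM(2,4;1|u)$, $\nu^1_{2,3}(u)=\IvKM(3,3;1|u)$ and $\nu^1_{2,4}(u)=\IKvM(2,4;1|u)$, each of which lies in $\ker\widetilde L_4$ for $u\in(0,1)$ by \eqref{eq:VanhoveLnODE} (cf.~\eqref{eq:PF_Vv6_u_all}). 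As in the proof of Proposition~\ref{prop:check_C3}, a direct computation --- differentiate $\varSigma_3(u)$ row by row and eliminate the top-order derivatives via $\widetilde L_4\nu^1_{2,j}(u)=0$ --- shows that $\widetilde L_4$ annihilates $q(u)\varSigma_3(u)$ (this reflects the self-duality, up to a rational gauge, of Vanhove's operator); hence $q(u)\varSigma_3(u)=\sum_{j=1}^4 c_j\,\nu^1_{2,j}(u)$ for fixed constants $c_j$, where $\nu^1_{2,1}(u),\dots,\nu^1_{2,4}(u)$ is the basis of $\ker\widetilde L_4$ from~\S\ref{sec:detN2}.

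To pin down the $c_j$ I would argue as follows. From $K_0(z)=-(\log(z/2)+\gamma)I_0(z)+\sum_{m\geq1}(z/2)^{2m}H_m/(m!)^2$ one gets the $u\to0^+$ expansions $\IKvM(1,5;1|u)=-\tfrac12(\log u)\IvKM(2,4;1|u)+\varrho_1(u)$ and $\IKvM(2,4;1|u)=-\tfrac12(\log u)\IvKM(3,3;1|u)+\varrho_2(u)$ with $\varrho_1,\varrho_2$ holomorphic at $u=0$; since $\nu^1_{2,2},\nu^1_{2,3}$ are themselves holomorphic at $u=0$, this shows that $\nu^1_{2,1}$ and $\nu^1_{2,4}$ each carry a logarithm there, while in $\varSigma_3$ the logarithmic contributions assemble into a determinant with two equal columns and cancel, so that $q(u)\varSigma_3(u)$ carries none. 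Matching the coefficients of $\log u$ and of $u\log u$ and using $\det\mathbf M_2\neq0$ forces $c_1=c_4=0$, leaving $q(u)\varSigma_3(u)=c_2\nu^1_{2,2}(u)+c_3\nu^1_{2,3}(u)$. The remaining constants are determined by the $u\to1^-$ behaviour, where $q(u)$ has a simple zero and $\varSigma_3(u)$ a simple pole, handled exactly as in Proposition~\ref{prop:W_N2_1} through the generalized Weber--Schafheitlin asymptotics \eqref{eq:I0K0}--\eqref{eq:I1K0}, the coincidences $\nu^1_{2,2}(1)=\nu^1_{2,4}(1)=\IKM(2,4;1)$, and the reduction $\acute\nu^1_{2,4}(1)-\acute\nu^1_{2,2}(1)=-\IKM(1,3;1)$ issuing from $I_0(t)K_1(t)+I_1(t)K_0(t)=1/t$ (cf.~\eqref{eq:IK_reduction}), together with known single values such as $\IKM(2,3;1)$ and $\int_0^\infty[J_0(t)]^6 t\,\D t$. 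Finally, invoking the closed-form relation between these two-scale Bessel moments and Kluyver's density $p_5$ (\cite{Zhou2017PlanarWalks}; cf.\ the $p_4$-identity $\IvKM(1,4;1|u)+4\IKvM(1,4;1|u)=\tfrac{\pi^4}{6}\int_0^\infty J_0(\sqrt u\,t)[J_0(t)]^4 t\,\D t$ used for $\check\varOmega_3$ in the proof of Proposition~\ref{prop:check_C3}), one identifies $c_2\nu^1_{2,2}(u)+c_3\nu^1_{2,3}(u)$ with $\tfrac{\pi^6}{80}\int_0^\infty J_0(\sqrt u\,t)[J_0(t)]^5 t\,\D t$, which is \eqref{eq:psi_p5}.

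For \eqref{eq:check_c4}, substitute \eqref{eq:psi_p5} into the differential equation of the preceding lemma; then $q(u)^2\check\omega_4(u)$ satisfies
\[
D^1\!\bigl(q(u)^2\,\check\omega_4(u)\bigr)=\frac{3\pi^6}{64}\,(\log u)\int_0^\infty J_0(\sqrt u\,t)[J_0(t)]^5 t\,\D t .
\]
Integrating from $0$ to $u$, with constant of integration $\check c_4:=\lim_{u\to0^+}q(u)^2\check\omega_4(u)$, and then integrating by parts twice --- first with $J_0(\sqrt v\,t)=\partial_v\bigl(2\sqrt v\,J_1(\sqrt v\,t)/t\bigr)$, then with $J_1(\sqrt v\,t)/\sqrt v=-(2/t)\,\partial_v J_0(\sqrt v\,t)$, exactly as at the end of the proof of Proposition~\ref{prop:check_C3} --- yields \eqref{eq:check_c4}. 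The hard part will be the constant-pinning inside the proof of \eqref{eq:psi_p5}: it demands a careful two-sided asymptotic analysis of the $3\times3$ Wro\'nskian minor at a singular endpoint of $\widetilde L_4$ on each side (keeping the logarithms straight at $u=0$, where the individual basis solutions are logarithmically singular but the minor is only meromorphic, and keeping the Weber--Schafheitlin blow-ups straight at $u=1$), followed by the match against the Kluyver-density evaluation; the preliminary ``direct computation'' and the two closing integrations by parts are purely mechanical.
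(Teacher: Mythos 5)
Your overall architecture coincides with the paper's: the gauged minor $q(u)\varSigma_3(u)$ is annihilated by $\widetilde L_4$ (the paper verifies this by direct computation for arbitrary $f_1,f_2,f_3\in\ker\widetilde L_4$) and hence expands over the basis $\nu^1_{2,1},\dots,\nu^1_{2,4}$; the logarithmic basis elements are excluded at $u=0$; the survivor is identified with $p_5(\sqrt u)/\sqrt u$ via the two-scale moment identity $\int_0^\infty J_0(\sqrt u\,t)[J_0(t)]^5t\D t=\frac{30}{\pi^4}\IvKM(2,4;1|u)$ from \cite{Zhou2017PlanarWalks}; and the passage to \eqref{eq:check_c4} by integrating $D^1[q(u)^2\check\omega_4(u)]=\frac{3\pi^6}{64}(\log u)\int_0^\infty J_0(\sqrt u\,t)[J_0(t)]^5t\D t$ and then integrating by parts is exactly the paper's closing step (your coefficient $3\pi^6/64$ checks out).

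The gap is in pinning down $c_2$ and $c_3$. Discarding $c_1=c_4=0$ via the absence of logarithms at $u=0$ is fine --- it is equivalent to the paper's observation that $\lim_{u\to0^+}\psi_3(u)$ and $\lim_{u\to0^+}D^1\psi_3(u)$ are finite --- but your plan to fix the remaining constants ``from the $u\to1^-$ behaviour, handled exactly as in Proposition~\ref{prop:W_N2_1}'' does not close as stated: the leading-order limit $\lim_{u\to1^-}q(u)\varSigma_3(u)=c_2\nu^1_{2,2}(1)+c_3\nu^1_{2,3}(1)$ is a \emph{single} linear relation in two unknowns, and since both $\nu^1_{2,2}$ and $\nu^1_{2,3}$ are finite at $u=1$ nothing more is extracted at leading order; to isolate $c_3$ along this route you would have to dig out the subleading $\log(1-u)$ singularity carried by $\nu^1_{2,3}$ but not by $\nu^1_{2,2}$, which you do not set up. The paper instead stays at $u\to0^+$ and computes \emph{two} data, $\lim_{u\to0^+}\psi_3(u)=\frac{3\pi^2}{8}\IKM(1,4;1)$ and $\lim_{u\to0^+}D^1\psi_3(u)=\frac{3\pi^2}{32}\IKM(1,4;3)$; the second of these is the genuinely hard input, evaluated by expressing the relevant $2\times2$ minors through the closed forms of $\IKM(1,4;2\ell-1)$ and $\IKM(2,3;2\ell-1)$, $\ell\in\{1,2,3\}$, in terms of the Bologna constant $C$ from \cite{Zhou2017WEF}, and fitting $c_2\IvKM(2,4;1|u)+c_3\IvKM(3,3;1|u)$ to both values then forces $c_2=\frac{3\pi^2}{8}$, $c_3=0$. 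Either endpoint could in principle be made to work, but as written your argument stops one equation short of determining the constants --- precisely at the step you yourself flag as ``the hard part''.
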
\begin{proof}First, we point out that  \begin{align}
\widetilde L_4\left[ u^2 (1-u) (9-u) (25-u) \det\begin{pmatrix}D^{0}f_{1}(u) & D^{0}f_2(u)&D^0f_3(u) \\
D^{1}f_{1}(u) & D^{1}f_2(u) &D^1f_3(u)\\
D^{2}f_{1}(u) & D^{2}f_2(u) &D^2f_3(u)\\
\end{pmatrix}\right]=0
\end{align} is true for any three functions $ f_1,f_2,f_{3}\in\ker\widetilde L_4$ residing the null space of $ \widetilde L_4$. So we may assert that there are constants $ C_1,C_2,C_3,C_4$ satisfying\begin{align}\begin{split}
\psi_3(u):={}&
u^2 (1-u) (9-u) (25-u) \det\begin{pmatrix}D^0\nu^1_{2,2}(u) & D^0\nu^1_{2,3}(u) & D^0\nu^1_{2,4}(u) \\
D^1\nu^1_{2,2}(u) & D^1\nu^1_{2,3}(u) & D^1\nu^1_{2,4}(u) \\
D^2\nu^1_{2,2}(u) & D^2\nu^1_{2,3}(u) & D^2\nu^1_{2,4}(u) \\
\end{pmatrix}\\={}& C_1\frac{\IvKM(1,5;1|u)+5\IKvM(1,5;1|u)}{6}+C_{2}\IvKM(2,4;1|u)\\{}&+C_3\IvKM(3,3;1|u)+C_{4}\IKvM(2,4;1|u),\end{split}
\end{align}for $u\in(0,1)$.

Next, we point out that the following limits\begin{align}
\lim_{u\to0^+}\psi_3(u)=\frac{3\pi^{2}}{8}\IKM(1,4;1)\quad\text{and}\quad\lim_{u\to0^+}D^1\psi_3(u)=\frac{3\pi^2}{32}\IKM(1,4;3).\label{eq:psi_3_0}
\end{align} allow us to determine\begin{align}
C_1=0,\quad C_2=\frac{3\pi^{2}}{8},\quad C_3=0,\quad C_4=0.
\end{align}

Here, before evaluating $ \lim_{u\to0^+}\psi_3(u)$, we  put down \begin{align}\begin{split}&
2^{3}u^2\det\begin{pmatrix}D^0\nu^1_{2,2}(u) & D^0\nu^1_{2,3}(u) & D^0\nu^1_{2,4}(u) \\
D^1\nu^1_{2,2}(u) & D^1\nu^1_{2,3}(u) & D^1\nu^1_{2,4}(u) \\
D^2\nu^1_{2,2}(u) & D^2\nu^1_{2,3}(u) & D^2\nu^1_{2,4}(u) \\
\end{pmatrix}\\={}&\det\left(\begin{array}{rrr}\nu^1_{2,2}(u) & \nu^1_{2,3}(u) & \nu^1_{2,4}(u)\\\sqrt{u}\acute\nu^1_{2,2}(u) & \sqrt{u}\acute\nu^1_{2,3}(u) & \sqrt{u}\acute\nu^1_{2,4}(u)\\\nu^2_{2,2}(u) & \nu^2_{2,3}(u) & \nu^2_{2,4}(u)\end{array}\right),\end{split}
\end{align} where the last determinant is asymptotic to   (cf.~Propositions \ref{prop:W_N2_0} and \ref{prop:W_M2_0})\begin{align}\begin{split}&
\det \begin{pmatrix}\mu^{1}_{2,1}+O(u) & \mu^{1}_{2,2}+O(u) &O(\log u) \\
O(u) &O(u) & -\mu^{1}_{2,2}+o(1) \\
\mu^{2}_{2,1}+O(u) & \mu^{2}_{2,2}+O(u)&O(\log u) \\
\end{pmatrix}\\={}&\IKM(2,3;1)\det \begin{pmatrix}\mu^{1}_{2,1} & \mu^{1}_{2,2}  \\
\mu^{2}_{2,1} & \mu^{2}_{2,2} \\
\end{pmatrix}+o(1)\\={}&\frac{2\pi^3\IKM(2,3;1)}{\sqrt{3^{3}5^5}}+o(1)\end{split}
\end{align} in the $ u\to0^+$ limit. Here, we recall from \cite[][Theorem 2.2.2 and Proposition 3.1.2]{Zhou2017WEF} that \begin{align} \IKM(2,3,1)=\frac{\sqrt{15}}{2\pi}\IKM(1,4,1),\end{align}so the evaluation of $ \lim_{u\to0^+}\psi_3(u)$ in \eqref{eq:psi_3_0} is now confirmed.

To compute  $ \lim_{u\to0^+}D^1\psi_3(u)$, we need the observations that $D^1[u^2D^2I_0(\sqrt{u}t)]=\frac{t^{3}\sqrt{u}}{8}I_1(\sqrt{u}t) $ and $ D^1[u^2$ $D^2K_0(\sqrt{u}t)]=-\frac{t^{3}\sqrt{u}}{8}K_1(\sqrt{u}t)$, which entail \begin{align}\begin{split}
D^1\psi_3(u)={}&\frac{(-3 u^2+70 u-259)\psi_3(u)}{(1-u) (9-u) (25-u)}+\frac{(1-u) (9-u) (25-u)}{16}\times\\{}&\times\det\begin{pmatrix}\nu^1_{2,2}(u) & \nu^1_{2,3}(u) & \nu^1_{2,4}(u)\\\acute\nu^1_{2,2}(u) &\acute\nu^1_{2,3}(u) & \acute\nu^1_{2,4}(u)\\\acute\nu^2_{2,2}(u) & \acute\nu^2_{2,3}(u) & \acute\nu^2_{2,4}(u)\end{pmatrix}.\end{split}
\end{align}Here, as $ u\to0^+$, the last determinant is asymptotic to\begin{align}
\det\begin{pmatrix}\mu^{1}_{2,1}+O(u) & \mu^{1}_{2,2}+O(u) &O(\log u)\\\frac{\sqrt{u}}{2}[\mu^{2}_{2,1}+O(u)] & \frac{\sqrt{u}}{2}[\mu^{2}_{2,2}+O(u)] & -\frac{\mu^{1}_{2,2}+o(1)}{\sqrt{u}}\\\frac{\sqrt{u}}{2}[\mu^{3}_{2,1}+O(u)] & \frac{\sqrt{u}}{2}[\mu^{3}_{2,2}+O(u)] & -\frac{\mu^{2}_{2,2}+o(1)}{\sqrt{u}}\end{pmatrix}.
\end{align}We recall  the following closed-form formulae (conjectured in \cite[][(95)--(100)]{BBBG2008}, proved in \cite[][\S3]{Zhou2017WEF})\begin{align}\left\{\begin{array}{r@{\,=\,}lr@{\,=\,}lr@{\,=\,}l}\frac{\mu^{1}_{2,1}}{\pi^{2}}&C,&\frac{\mu^{2}_{2,1}}{\pi^{2}}&\left( \frac{2}{15} \right)^{2}\left( 13C-\frac{1}{10C} \right),&\frac{\mu^{3}_{2,1}}{\pi^{2}}&\left( \frac{4}{15} \right)^{3}\left( 43C-\frac{19}{40C} \right)\\\frac{2\mu^{1}_{2,2}}{\sqrt{15}\pi}&C,&\frac{2\mu^{2}_{2,2}}{\sqrt{15}\pi}&\left( \frac{2}{15} \right)^{2}\left( 13C+\frac{1}{10C} \right),&\frac{2\mu^{3}_{2,2}}{\sqrt{15}\pi}&\left( \frac{4}{15} \right)^{3}\left( 43C+\frac{19}{40C} \right)\end{array}\right.
\end{align} where  $ C=\frac{1}{240 \sqrt{5}\pi^{2}}\Gamma \left(\frac{1}{15}\right) \Gamma \left(\frac{2}{15}\right) \Gamma \left(\frac{4}{15}\right) \Gamma \left(\frac{8}{15}\right)$ is the ``Bologna constant'' attributed to Broadhurst \cite{Broadhurst2007,BBBG2008} and Laporta \cite{Laporta2008}. It is then clear that \begin{align}
\lim_{u\to0^+}D^1\psi_3(u)=-\frac{259 \pi ^4 C}{600}+\frac{\pi ^4 (2720C^2-1)}{6000C}=\frac{3\pi^2}{32}\IKM(1,4;3),
\end{align}as claimed in \eqref{eq:psi_3_0}.

Now, to guarantee the finiteness of both $ \lim_{u\to0^+}\psi_3(u)$ and $ \lim_{u\to0^+}D^1\psi_3(u)$, we must have $ C_1=C_4=0$. Fitting $\psi_3(u)=C_{2}\IvKM(2,4;1|u)+C_3\IvKM(3,3;1|u) $ to  \eqref{eq:psi_3_0}, we obtain $ C_2=\frac{3\pi^2}{8},C_3=0$.

Last, but not the least, we recall from \cite[][Lemma 2.1]{Zhou2017PlanarWalks}  that \begin{align}
\frac{p_5(\sqrt{u})}{\sqrt{u}}:=\int_0^\infty J_0(\sqrt{u}t)[J_0(t)]^5t\D t=\frac{30\IvKM(2,4;1|u)}{\pi^{4}},\quad \forall u\in[0,1],
\end{align}which turns $ \psi_3(u)=\frac{3\pi^2}{8}\IvKM(2,4;1|u)$ into $ \psi_3(u)=\frac{\pi^{6}}{80}\frac{p_5(\sqrt{u})}{\sqrt{u}}$, just as stated in \eqref{eq:psi_p5}.

Following procedures similar to  those in Proposition \ref{prop:check_C3}, we can deduce \eqref{eq:check_c4} from  \eqref{eq:psi_p5}.\end{proof}In the next proposition, we study the determinant\begin{align}\begin{split}&
2^{6}u^{4}\check\omega_4(u)\\={}&\det\left(\begin{array}{rrrr}\IKvM(0,6;1|u)&\nu^1_{2,2}(u) & \nu^1_{2,3}(u) & \nu^1_{2,4}(u) \\
\sqrt{u}\IKpM(0,6;1|u)&\sqrt{u}\acute\nu^1_{2,2}(u) & \sqrt{u}\acute\nu^1_{2,3}(u) & \sqrt{u}\acute\nu^1_{2,4}(u) \\
\IKvM(0,6;3|u)&\nu^2_{2,2}(u) & \nu^2_{2,3}(u) & \nu^2_{2,4}(u) \\\sqrt{u}\IKpM(0,6;3|u)&\sqrt{u}\acute\nu^2_{2,2}(u) & \sqrt{u}\acute\nu^2_{2,3}(u) & \sqrt{u}\acute\nu^2_{2,4}(u)
\end{array}\right)\end{split}\label{eq:IK_echelon_1}
\end{align}
  in the $ u\to0^+$ limit.\begin{proposition}[Factorization of $ \check \omega_4(0^+)$]We have \begin{align}\begin{split}\check
c_{4}={}&3^45^4\lim_{u\to0^+}u^4\check \omega_4(u)=-\frac{45 \sqrt{15} \pi ^3}{32}\det\check{\mathbf M}_2\\={}&-\frac{3\pi^6}{16}m(1+x_1+x_2+x_3+x_4).\end{split}
\end{align} Consequently, we have \begin{align}\begin{split}&
2^{12}3^{2}\lim_{u\to1^-}(1-u)^2\check\omega_4(u)\\={}&-\frac{3\pi^6}{16}m(1+x_1+x_2+x_3+x_4)-\frac{3 \pi ^6}{16}\int_{0}^\infty \frac{1-J_0(t)}{t}[J_0(t)]^5\D t\\={}&-\frac{3\pi^6}{16}m(1+x_1+x_2+x_3+x_4+x_{5}).\label{eq:check_omega_4_1_int_repn}
\end{split}\end{align}\end{proposition}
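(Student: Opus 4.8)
The plan is to read $\check c_4$ off the $u\to0^+$ endpoint of the integral representation \eqref{eq:check_c4} and to obtain \eqref{eq:check_omega_4_1_int_repn} from its $u\to1^-$ endpoint. Letting $u\to0^+$ in \eqref{eq:check_c4}, the prefactor satisfies $[u^2(1-u)(9-u)(25-u)]^2\sim3^4 5^4 u^4$, the middle term on the right carries the factor $\sqrt u\log u$ and hence vanishes (the ramble-type integral staying bounded near $u=0$), and $\int_0^\infty\frac{1-J_0(\sqrt ut)}{t}[J_0(t)]^5\D t\to0$ by dominated convergence, since $1-J_0(\sqrt ut)=O(\min\{ut^2,1\})$ against the integrable weight $t^{-1}[J_0(t)]^5$. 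Hence $\check c_4=3^4 5^4\lim_{u\to0^+}u^4\check\omega_4(u)$, and it remains to evaluate this limit via the echelon formula \eqref{eq:IK_echelon_1}. Arguing as in Propositions~\ref{prop:W_M2_0} and~\ref{prop:W_N2_0}, using sup-bounds of the type \eqref{eq:IK_sup}, \eqref{eq:Klog_sup}, \eqref{eq:K1_sup}, I would show that as $u\to0^+$: in the non-prime rows the $\IKvM$-columns $\IKvM(0,6;2\ell-1|u)$ and $\nu^\ell_{2,4}(u)=\IKvM(2,4;2\ell-1|u)$ are $O(\log u)$, while $\nu^\ell_{2,2}(u)=\IKM(1,4;2\ell-1)+O(u)$ and $\nu^\ell_{2,3}(u)=\IKM(2,3;2\ell-1)+O(u)$ (the single $I_0(\sqrt ut)$ factor degenerating to $1$); in the prime rows the $\IpKM$-columns are $O(u)$, while $\sqrt u\,t\,K_1(\sqrt ut)\to1$ forces $\sqrt u\IKpM(0,6;2\ell-1|u)\to-\IKM(0,5;2\ell-1)$ and $\sqrt u\acute\nu^\ell_{2,4}(u)=\sqrt u\IKpM(2,4;2\ell-1|u)\to-\IKM(2,3;2\ell-1)$. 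The first of these limits, in which the absorbed $K_1$-factor collapses a $6$-Bessel vacuum to the $5$-Bessel moment $\IKM(0,5;\cdot)$, is what makes $\det\check{\mathbf M}_2$ (rather than a $6$-Bessel determinant) appear.

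A generalized Laplace expansion then isolates the one surviving contribution — the two ``inner'' columns taken from the non-prime rows, the two ``outer'' columns from the prime rows — all other permutation products being $o(1)$; this gives
\[
2^6\lim_{u\to0^+}u^4\check\omega_4(u)=-\det\begin{pmatrix}\IKM(1,4;1)&\IKM(2,3;1)\\\IKM(1,4;3)&\IKM(2,3;3)\end{pmatrix}\det\begin{pmatrix}\IKM(0,5;1)&\IKM(2,3;1)\\\IKM(0,5;3)&\IKM(2,3;3)\end{pmatrix}=-\det\mathbf M_2\,\det\check{\mathbf M}_2.
\]
Substituting $\det\mathbf M_2=\frac{2\pi^3}{\sqrt{3^3 5^5}}$ (Proposition~\ref{prop:W_M2_0}) and $\det\check{\mathbf M}_2=\frac{2\pi^3}{15\sqrt{15}}m(1+x_1+x_2+x_3+x_4)$ from \eqref{eq:detMv2} then yields $\check c_4=-\frac{45\sqrt{15}\pi^3}{32}\det\check{\mathbf M}_2=-\frac{3\pi^6}{16}m(1+x_1+x_2+x_3+x_4)$.

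For the remaining identity I would let $u\to1^-$ in \eqref{eq:check_c4}: here $[u^2(1-u)(9-u)(25-u)]^2\sim2^{12}3^2(1-u)^2$ and the $\sqrt u\log u$-term drops out once more (with $\int_0^\infty\frac{1-J_0(\sqrt ut)}{t}[J_0(t)]^5\D t$ passing to its value at $u=1$ by dominated convergence), so $2^{12}3^2\lim_{u\to1^-}(1-u)^2\check\omega_4(u)=\check c_4-\frac{3\pi^6}{16}\int_0^\infty\frac{1-J_0(t)}{t}[J_0(t)]^5\D t$. Then I would run the $n=6$ analogue of \eqref{eq:Mahler_diff}: integration by parts applied to the derivative of $[J_0(t)]^5-[J_0(t)]^6$ (which vanishes like $t^2$ at the origin and decays at infinity, so no boundary term arises), together with Broadhurst's representation \eqref{eq:BroadhurstMahler} for both $5$- and $6$-step walks, gives $\int_0^\infty\frac{1-J_0(t)}{t}[J_0(t)]^5\D t=m(1+x_1+\cdots+x_5)-m(1+x_1+\cdots+x_4)$. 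Feeding in the value of $\check c_4$ from the previous paragraph delivers precisely \eqref{eq:check_omega_4_1_int_repn}.

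I expect the main obstacle to be the entrywise asymptotic analysis underlying the factorization: one must make the sixteen estimates with genuinely uniform error control — where bounds such as $\sup_{t>0}t^{k+j}|[I_0(t)K_0(t)]^k-(2t)^{-k}|<\infty$, $\sup_{t>0}\frac{K_0(t)}{1+|\log t|}<\infty$ and $\sup_{t>0}\frac{|tK_1(t)-1|}{t(1+|\log t|)}<\infty$ are decisive — and then verify that every permutation term of the $4\times4$ determinant other than the dominant block product is $o(1)$. The bookkeeping in the collapse $\IKpM(0,6;\cdot\,|u)\rightsquigarrow-\IKM(0,5;\cdot)$ likewise needs care, since it is exactly this loss of one Bessel factor that is responsible for $\det\check{\mathbf M}_2$ appearing in the final answer.
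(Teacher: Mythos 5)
Your proposal is correct and follows essentially the same route as the paper: read $\check c_4$ off the $u\to0^+$ endpoint of \eqref{eq:check_c4}, factorize $2^6\lim_{u\to0^+}u^4\check\omega_4(u)=-\det\mathbf M_2\det\check{\mathbf M}_2$ via the block structure of \eqref{eq:IK_echelon_1} (with the collapse $\sqrt{u}\IKpM(0,6;\cdot|u)\to-\IKM(0,5;\cdot)$ producing $\det\check{\mathbf M}_2$), and then obtain \eqref{eq:check_omega_4_1_int_repn} from the $u\to1^-$ endpoint together with the $n=6$ analogue of \eqref{eq:Mahler_diff}. The only difference is that you spell out the ``familiar arguments'' the paper delegates to \S\ref{subsec:red_V5}; your entrywise asymptotics, Laplace-expansion sign, and constants all check out.
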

\begin{proof}Using methods in Proposition \ref{prop:W_N2_0}, we can  show that \begin{align}\begin{split}&
2^{6}u^{4}\check\omega_4(u)\\={}&\det\begin{pmatrix}O(\log u) & \mu^{1}_{2,1}+O(u) & \mu^{1}_{2,2}+O(u) &O(\log u)\\
\sqrt{u}\IKpM(0,6;1|u) & O(u) &O(u)&\sqrt{u}\acute\nu^1_{2,4}(u) \\
O(\log u) & \mu^{2}_{2,1}+O(u) & \mu^{2}_{2,2}+O(u)&O(\log u) \\\sqrt{u}\IKpM(0,6;3|u) & O(u) &O(u)&\sqrt{u}\acute\nu^2_{2,4}(u) \\
\end{pmatrix}\\={}&-\det \begin{pmatrix}\mu^{1}_{2,1} & \mu^{1}_{2,2} \\
\mu^{2}_{2,1} & \mu^{2}_{2,2} \\
\end{pmatrix}\det\begin{pmatrix}\sqrt{u}\IKpM(0,6;1|u) & \sqrt{u}\acute\nu^1_{2,4}(u) \\
\sqrt{u}\IKpM(0,6;3|u) & \sqrt{u}\acute\nu^2_{2,4}(u) \\
\end{pmatrix}\\{}&+O(u^{2}\log^2u),\quad\text{as }u\to0^+,\end{split}
\end{align}and \begin{align}\begin{split}&
\det\begin{pmatrix}\sqrt{u}\IKpM(0,6;1|u) & \sqrt{u}\acute\nu^1_{2,4}(u) \\
\sqrt{u}\IKpM(0,6;3|u) & \sqrt{u}\acute\nu^2_{2,4}(u) \\
\end{pmatrix}\\={}&\det\begin{pmatrix}-\IKM(0,5;1)+o(1) & -\IKM(2,3;1)+o(1) \\
-\IKM(0,5;3)+o(1) & -\IKM(2,3;3)+o(1) \\
\end{pmatrix}\\={}&\det\check{\mathbf M}_2+o(1),\quad\text{as }u\to0^+.\end{split}
\end{align}The rest of our claims then follow from familiar arguments in \S\ref{subsec:red_V5}.\end{proof}

To wrap up this section, we reduce $ \check\omega_4(u), u\to1^-$ to $ \det\check{\mathbf N}_2$.

\begin{proposition}[Factorization of $ \check \omega_4(1^{-})$]We have the following factorization \begin{align}
\lim_{u\to1^-}(1-u)^2\check\omega_4(u)=-\frac{\pi^2}{2^{11}}\det\check {\mathbf N}_2
\end{align}so that \begin{align}
\det\check{\mathbf N}_2:=\det
\begin{pmatrix}\IKM(0,6;1) & \IKM(0,6;3) \\
\IKM(2,4;1) & \IKM(2,4;3) \\
\end{pmatrix}=\frac{\pi^{4}}{96}m(1+x_1+x_2+x_3+x_4+x_{5})
.\label{eq:detMv2}
\end{align}\end{proposition}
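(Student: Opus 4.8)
The plan is to follow the proof of Proposition~\ref{prop:W_N2_1} almost verbatim, the one new feature being that the first column of the working determinant now carries the two-scale vacuum diagram $\IKvM(0,6;2\ell-1|u)$ (and its $u$-derivative $\IKpM(0,6;2\ell-1|u)$) instead of the linear combination $\nu^\ell_{2,1}(u)$. First I would record the algebraic reduction produced, exactly as for \eqref{eq:IK_echelon_1}, by repeated use of the Bessel equations $(uD^2+D^1)I_0(\sqrt u t)=\tfrac{t^2}{4}I_0(\sqrt u t)$ and $(uD^2+D^1)K_0(\sqrt u t)=\tfrac{t^2}{4}K_0(\sqrt u t)$, but without the cosmetic $\sqrt u$ factors that were only needed near $u=0$:
\begin{align}
2^{6}u^{3}\check\omega_4(u)=\det\begin{pmatrix}
\IKvM(0,6;1|u) & \nu^1_{2,2}(u) & \nu^1_{2,3}(u) & \nu^1_{2,4}(u)\\
\IKpM(0,6;1|u) & \acute\nu^1_{2,2}(u) & \acute\nu^1_{2,3}(u) & \acute\nu^1_{2,4}(u)\\
\IKvM(0,6;3|u) & \nu^2_{2,2}(u) & \nu^2_{2,3}(u) & \nu^2_{2,4}(u)\\
\IKpM(0,6;3|u) & \acute\nu^2_{2,2}(u) & \acute\nu^2_{2,3}(u) & \acute\nu^2_{2,4}(u)
\end{pmatrix},\qquad u\in(0,1).
\end{align}
Every entry in columns $1,2,4$ tends to a finite limit as $u\to1^-$ (exponential decay of the integrands), with $\IKvM(0,6;2\ell-1|1)=\IKM(0,6;2\ell-1)$ and $\nu^\ell_{2,2}(1)=\nu^\ell_{2,4}(1)=\IKM(2,4;2\ell-1)$; only the third column, built from $\nu^\ell_{2,3}(u)=\IvKM(3,3;2\ell-1|u)$ and $\acute\nu^\ell_{2,3}(u)=\IpKM(3,3;2\ell-1|u)$, develops singularities there. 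This is precisely the column analysed in Proposition~\ref{prop:W_N2_1}: the splitting $[I_0(t)K_0(t)]^2=\tfrac{1}{4t^2}+O(t^{-4})$ together with the generalised Weber--Schafheitlin asymptotics \eqref{eq:I0K0}--\eqref{eq:I1K0} shows that $\nu^1_{2,3}(u),\acute\nu^1_{2,3}(u)$ grow no faster than $O(\log(1-u))$ and $\nu^2_{2,3}(u)=O((1-u)^{-1})$, whereas $(1-u)^2\acute\nu^2_{2,3}(u)\to\tfrac12$.

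Next I would multiply the display by $(1-u)^2$ and let $u\to1^-$: in the third column only the bottom entry survives the scaling, so a cofactor expansion along that column leaves
\begin{align}
2^{6}\lim_{u\to1^-}(1-u)^2\check\omega_4(u)=-\frac12\det\begin{pmatrix}
\IKM(0,6;1) & \IKM(2,4;1) & \IKM(2,4;1)\\
\IKpM(0,6;1) & \acute\nu^1_{2,2}(1) & \acute\nu^1_{2,4}(1)\\
\IKM(0,6;3) & \IKM(2,4;3) & \IKM(2,4;3)
\end{pmatrix}.
\end{align}
Subtracting the second column from the third and invoking $I_0(t)K_1(t)+I_1(t)K_0(t)=\tfrac1t$, exactly as in \eqref{eq:detM2_fac_prep} and \eqref{eq:IK_reduction}, gives $\acute\nu^1_{2,4}(1)-\acute\nu^1_{2,2}(1)=-\IKM(1,3;1)$, and the $3\times3$ determinant collapses to $\IKM(1,3;1)\det\check{\mathbf N}_2$. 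Hence $\lim_{u\to1^-}(1-u)^2\check\omega_4(u)=-\tfrac{\IKM(1,3;1)}{2^{7}}\det\check{\mathbf N}_2$, and since $\IKM(1,3;1)=\tfrac{\pi^2}{2^4}$ \cite[][(55)]{BBBG2008} this is the claimed factorization $\lim_{u\to1^-}(1-u)^2\check\omega_4(u)=-\tfrac{\pi^2}{2^{11}}\det\check{\mathbf N}_2$.

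Finally I would combine this with the identity $2^{12}3^{2}\lim_{u\to1^-}(1-u)^2\check\omega_4(u)=-\tfrac{3\pi^6}{16}m(1+x_1+x_2+x_3+x_4+x_5)$ established in \eqref{eq:check_omega_4_1_int_repn}, obtaining $18\pi^2\det\check{\mathbf N}_2=\tfrac{3\pi^6}{16}m(1+x_1+x_2+x_3+x_4+x_5)$, that is, $\det\check{\mathbf N}_2=\tfrac{\pi^4}{96}m(1+x_1+x_2+x_3+x_4+x_5)$. The one genuinely delicate point is the asymptotic audit of the third column as $u\to1^-$ --- verifying that $\acute\nu^2_{2,3}$ is the sole source of the $(1-u)^{-2}$ blow-up and that its renormalised limit is exactly $\tfrac12$ --- but that audit is identical to the one already performed in Proposition~\ref{prop:W_N2_1}, and replacing $\nu^\ell_{2,1}$ by the vacuum $\IKvM(0,6;\cdot|u)$ affects only the first (uniformly bounded) column, leaving that analysis intact; keeping track of the permutation signs through the two cofactor expansions is routine.
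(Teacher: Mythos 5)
Your proposal is correct and follows essentially the same route as the paper's own proof: the algebraic reduction of $\check\omega_4$ to the $4\times4$ determinant, the audit showing that only $(1-u)^2\acute\nu^2_{2,3}(u)\to\tfrac12$ survives in the singular third column, the cofactor expansion and column elimination via $I_0(t)K_1(t)+I_1(t)K_0(t)=\tfrac1t$, and the final comparison with \eqref{eq:check_omega_4_1_int_repn} all match the paper step for step. The only (immaterial) cosmetic difference is your power $u^3$ versus the paper's $u^2$ in front of $(1-u)^2\check\omega_4(u)$, which makes no difference in the limit $u\to1^-$.
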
\begin{proof}Akin to Proposition \ref{prop:W_N2_1}, we have \begin{align}\begin{split}&
2^{6}u^{2}(1-u)^{2}\check\omega_4(u)\\={}&\det \begin{pmatrix}\IKM(0,6;1)+\circ&\nu^1_{2,2}(1)+\circ&\circ&\nu^1_{2,4}(1)+\circ \\
\sharp\ & \acute\nu^1_{2,2}(1)+\circ & \circ & \acute \nu^1_{2,4}(1)+\circ \\
\IKM(0,6;3)+\circ  & \nu^2_{2,2}(1)+\circ & \circ&\nu^1_{2,4}(1)+\circ  \\
\sharp\ & \sharp & \frac{1}{2}+\circ & \sharp \\
\end{pmatrix}\\={}&-\frac{1}{2}\det\begin{pmatrix}\IKM(0,6;1)+\circ&\IKM(2,4;1)+\circ&\IKM(2,4;1)+\circ \\
\sharp\ & \IpKM(2,4;1|1)+\circ  &  \IKpM(2,4;1|1)+\circ \\
\IKM(0,6;3)+\circ  & \IKM(2,4;3)+\circ & \IKM(2,4;3)+\circ \\
\end{pmatrix}+o(1)\end{split}
\end{align}where a hash  (resp.~circle)  stands for a bounded (resp.~infinitesimal) quantity, as $u$ approaches $1$ from below. Using the fact that $ \IKpM(2,4;1|1)-\IpKM(2,4;1|1)=-\IKM(1,3;1)=-\frac{\pi^2}{2^{4}}$, we can compute\begin{align}\begin{split}&
2^{6}\lim_{u\to1^-}u^{2}(1-u)^{2}\check\omega_4(u)\\={}&-\frac{1}{2}\det\begin{pmatrix}\IKM(0,6;1)&\IKM(2,4;1)&0 \\
\IKpM(0,6;1|1)\ & \IpKM(2,4;1|1)  &  -\frac{\pi^2}{2^{4}} \\
\IKM(0,6;3)  & \IKM(2,4;3) & 0 \\
\end{pmatrix}\\={}&-\frac{\pi^2}{2^5}\det\check{\mathbf N}_2,\end{split}
\end{align}so our conclusion follows immediately.\end{proof}

\subsection*{Acknowledgments}I am grateful to Dr.\ David Broadhurst for his thought-inspiring conjectures about algebraic relations among Feynman integrals, and  for his communications on latest progress in the arithmetic studies of Bessel moments \cite{Broadhurst2017Paris,Broadhurst2017CIRM,Broadhurst2017Higgs,Broadhurst2017DESY}. I dedicate this work to his 70th birthday.


\begin{thebibliography}{10}

\bibitem{ABW2014}
Luise Adams, Christian Bogner, and Stefan Weinzierl.
\newblock The two-loop sunrise graph with arbitrary masses.
\newblock {\em J. Math. Phys.}, 54(5):052303, 18, 2013.
\newblock \url{arXiv:1302.7004} [hep-ph].

\bibitem{AAR}
George~E. Andrews, Richard Askey, and Ranjan Roy.
\newblock {\em Special Functions}, volume~71 of {\em Encyclopedia of
  Mathematics and Its Applications}.
\newblock Cambridge University Press, Cambridge, UK, 1999.

\bibitem{BBBC2007Ising}
D.~H. Bailey, D.~Borwein, J.~M. Borwein, and R.~E. Crandall.
\newblock Hypergeometric forms for {I}sing-class integrals.
\newblock {\em Experiment. Math.}, 16(3):257--276, 2007.

\bibitem{BBC2006Ising}
D.~H. Bailey, J.~M. Borwein, and R.~E. Crandall.
\newblock Integrals of the {I}sing class.
\newblock {\em J. Phys. A}, 39(40):12271--12302, 2006.

\bibitem{BBBG2008}
David~H. Bailey, Jonathan~M. Borwein, David Broadhurst, and M.~L. Glasser.
\newblock Elliptic integral evaluations of {B}essel moments and applications.
\newblock {\em J. Phys. A}, 41(20):205203 (46pp), 2008.
\newblock \url{arXiv:0801.0891v2} [hep-th].

\bibitem{Bailey1936II}
W.~N. Bailey.
\newblock Some infinite integrals involving {B}essel functions ({II}).
\newblock {\em J. London Math. Soc.}, S1-11(1):16--20, 1936.


\bibitem{BlochKerrVanhove2015}
Spencer Bloch, Matt Kerr, and Pierre Vanhove.
\newblock A {F}eynman integral via higher normal functions.
\newblock {\em Compos. Math.}, 151(12):2329--2375, 2015.
\newblock \url{arXiv:1406.2664v3} [hep-th].

\bibitem{BorweinSalvy2007}
Jonathan~M. Borwein and Bruno Salvy.
\newblock A proof of a recurrence for {B}essel moments.
\newblock {\em Experiment. Math.}, 17(2):223--230, 2008.
\newblock \url{arXiv:0706.1409v2}  [cs.SC].

\bibitem{BSW2013}
Jonathan~M. Borwein, Armin Straub, and James Wan.
\newblock Three-step and four-step random walk integrals.
\newblock {\em Exp. Math.}, 22(1):1--14, 2013.

\bibitem{BSWZ2012}
Jonathan~M. Borwein, Armin Straub, James Wan, and Wadim Zudilin.
\newblock Densities of short uniform random walks.
\newblock {\em Canad. J. Math.}, 64(5):961--990, 2012.
\newblock (With an appendix by Don Zagier) \url{arXiv:1103.2995} [math.CA].

\bibitem{BoydLindRVDeninger2003}
David Boyd, Doug Lind, Fernando Rodr\'iguez-Villegas, and Christopher Deninger.
\newblock The many aspects of {M}ahler's measure.
\newblock Final report of the Banff workshop 03w5035 (April 26--May 1, 2003),
  2003.

\bibitem{Boyd1981b}
David~W. Boyd.
\newblock Speculations concerning the range of {M}ahler's measure.
\newblock {\em Canad. Math. Bull.}, 24(4):453--469, 1981.

\bibitem{Broadhurst2007}
David Broadhurst.
\newblock Reciprocal {PSLQ} and the tiny nome of {B}ologna.
\newblock In {\em {\em International Workshop ``Frontiers in Perturbative
  Quantum Field Theory''}}, Bielefeld, Germany, June 14 2007.
  \begin{otherlanguage*}{german}Zentrum f\"ur interdisziplin\"are Forschung in
  Bielefeld\end{otherlanguage*}.
\newblock
  \url{http://www.physik.uni-bielefeld.de/igs/schools/ZiF2007/Broadhurst.pdf}.

\bibitem{Broadhurst2009}
David Broadhurst.
\newblock Bessel moments, random walks and {C}alabi--{Y}au equations.
\newblock preprint,
  \url{https://carma.newcastle.edu.au/jon/Preprints/Papers/Submitted%20Papers/%
4step-walks/walk-broadhurst.pdf}, 2009.

\bibitem{Broadhurst2013MZV}
David Broadhurst.
\newblock Multiple zeta values and modular forms in quantum field theory.
\newblock In C.~Schneider and J.~Bl\"umlein, editors, {\em Computer Algebra in
  Quantum Field Theory}, Texts \& Monographs in Symbolic Computation, pages
  33--73. Springer-Verlag, Vienna, Austria, 2013.
\newblock
  \url{https://link.springer.com/chapter/10.1007%2F978-3-7091-1616-6_2}.

\bibitem{Broadhurst2016}
David Broadhurst.
\newblock Feynman integrals, ${L}$-series and {K}loosterman moments.
\newblock {\em Commun. Number Theory Phys.}, 10(3):527--569, 2016.
\newblock \url{arXiv:1604.03057v1} [physics.gen-ph].

\bibitem{Broadhurst2017Paris}
David Broadhurst.
\newblock ${L}$-series from {F}eynman diagrams with up to 22 loops.
\newblock In {\em Workshop on Multi-loop Calculations: Methods and
  Applications}, Paris, France, June 7, 2017. S\'eminaires Internationaux de
  Recherche de Sorbonne Universit\'es.
\newblock
  \url{https://multi-loop-2017.sciencesconf.org/data/program/Broadhurst.pdf}.


\bibitem{Broadhurst2017CIRM}
David Broadhurst.
\newblock Combinatorics of {F}eynman integrals.
\newblock In {\em Combinatoire Alg\'ebrique, R\'esurgence, Moules et
  Applications}, Marseille-Luminy, France, June 28, 2017. Centre International
  de Rencontres Math\'ematiques.
\newblock
  \url{http://library.cirm-math.fr/Record.htm?idlist=29&record=192828141249100%
00969}.

\bibitem{Broadhurst2017Higgs}
David Broadhurst.
\newblock Feynman integrals, beyond polylogs, up to 22 loops.
\newblock In {\em Amplitudes 2017}, Edinburgh, Scotland, UK, July 12, 2017.
  Higgs Centre for Theoretical Physics.
\newblock
  \url{https://indico.ph.ed.ac.uk/event/26/contribution/21/material/slides/0.p%
df}.

\bibitem{Broadhurst2017DESY}
David Broadhurst.
\newblock Feynman integrals, ${L}$-series and {K}loosterman moments.
\newblock In {\em Elliptic Integrals, Elliptic Functions and Modular Forms in
  Quantum Field Theory}, Zeuthen, Germany, Oct 23, 2017. KMPB Conference at
  DESY.
\newblock
  \url{https://indico.desy.de/getFile.py/access?contribId=3&resId=0&materialId%
=slides&confId=18291}.


\bibitem{BroadhurstMellit2016}
David Broadhurst and Anton Mellit.
\newblock Perturbative quantum field theory informs algebraic geometry.
\newblock In {\em Loops and Legs in Quantum Field Theory}. PoS (LL2016) 079,
  2016.
\newblock
  \url{https://pos.sissa.it/archive/conferences/260/079/LL2016_079.pdf}.

\bibitem{BMW1997}
Manuel Bronstein, Thom Mulders, and Jacques-Arthur Weil.
\newblock On symmetric powers of differential operators.
\newblock In {\em Proceedings of the 1997 {I}nternational {S}ymposium on
  {S}ymbolic and {A}lgebraic {C}omputation ({K}ihei, {HI})}, pages 156--163.
  ACM, New York, 1997.



\bibitem{Groote2007}
S.~Groote, J.~G. K\"orner, and A.~A. Privovarov.
\newblock On the evaluation of a certain class of {F}eynman diagrams in
  $x$-space: {S}unrise-type topologies at any loop order.
\newblock {\em Ann. Phys.}, 322:2374--2445, 2007.
\newblock \url{arXiv:hep-ph/0506286v1}.

\bibitem{Laporta2008}
S.~Laporta.
\newblock Analytical expressions of three- and four-loop sunrise {F}eynman
  integrals and four-dimensional lattice integrals.
\newblock {\em Internat. J. Modern Phys. A}, 23(31):5007--5020, 2008.
\newblock \url{arXiv:0803.1007v4} [hep-ph].

\bibitem{LaportaRemiddi1996}
S.~Laporta and E.~Remiddi.
\newblock The analytical value of the electron $(g-2)$ at order $ \alpha^3$ in
  {QED}.
\newblock {\em Physics Letters B}, 379(1):283--291, 1996.
\newblock \url{arXiv:hep-ph/9602417}.

\bibitem{LR2004}
S.~Laporta and E.~Remiddi.
\newblock Analytic treatment of the two loop equal mass sunrise graph.
\newblock {\em Nuclear Phys. B}, 704(1-2):349--386, 2005.
\newblock \url{arXiv:hep-ph/0406160}.

\bibitem{Laporta:2017okg}
Stefano Laporta.
\newblock High-precision calculation of the 4-loop contribution to the electron
  $g-2$ in qed.
\newblock {\em Physics Letters B}, 772(Supplement C):232--238, 2017.
\newblock \texttt{arXiv:1704.06996} [hep-th].

\bibitem{Ouvry2005}
St\'ephane Ouvry.
\newblock Random {A}haronov--{B}ohm vortices and some exactly solvable families
  of integrals.
\newblock {\em J. Stat. Mech.: Theory Exp.}, 1:P09004 (9pp), 2005.


\bibitem{MSWZ2012}
Stefan M\"uller-Stach, Stefan Weinzierl, and Raphael Zayadeh.
\newblock A second-order differential equation for the two-loop sunrise graph
  with arbitrary masses.
\newblock {\em Commun. Number Theory Phys.}, 6(1):203--222, 2012.
\newblock \url{arXiv:1112.4360} [hep-ph].

\bibitem{MSWZ2014}
Stefan M\"uller-Stach, Stefan Weinzierl, and Raphael Zayadeh.
\newblock Picard--{F}uchs equations for {F}eynman integrals.
\newblock {\em Comm. Math. Phys.}, 326(1):237--249, 2014.
\newblock \url{arXiv:1212.4389} [hep-ph].



\bibitem{Samart2016}
Detchat Samart.
\newblock Feynman integrals and critical modular {$L$}-values.
\newblock {\em Commun. Number Theory Phys.}, 10(1):133--156, 2016.
\newblock \url{arXiv:1511.07947v2} [math.NT].

\bibitem{StraubZudilin2018}
Armin Straub and Wadim Zudilin.
\newblock Short walk adventures.
\newblock \url{arXiv:1801.06002} [math.NT], 2018.

\bibitem{Vanhove2014Survey}
Pierre Vanhove.
\newblock The physics and the mixed {H}odge structure of {F}eynman integrals.
\newblock In {\em String-{M}ath 2013}, volume~88 of {\em Proc. Sympos. Pure
  Math.}, pages 161--194. Amer. Math. Soc., Providence, RI, 2014.
\newblock \url{arXiv:1401.6438} [hep-th].

\bibitem{Watson1944Bessel}
G.~N. Watson.
\newblock {\em A Treatise on the Theory of {B}essel Functions}.
\newblock Cambridge University Press, Cambridge, UK, 2nd edition, 1944.

\bibitem{HB1}
Yajun Zhou.
\newblock Hilbert transforms and sum rules of {B}essel moments.
\newblock {\em Ramanujan J.}, 2017.
\newblock (to appear) \url{doi:10.1007/s11139-017-9945-y}
  \url{arXiv:1706.01068} [math.CA].

\bibitem{Zhou2017WEF}
Yajun Zhou.
\newblock Wick rotations, {E}ichler integrals, and multi-loop {F}eynman
  diagrams.
\newblock {\em Commun. Number Theory Phys.}, 2018.
\newblock (to appear) \newblock \url{arXiv:1706.08308} [math.NT].

\bibitem{Zhou2017PlanarWalks}
Yajun Zhou.
\newblock On {B}orwein's conjectures for planar uniform random walks.
\newblock \url{arXiv:1708.02857} [math.CA], 2017.



\end{thebibliography}

\end{document}